\theoremstyle{plain}
\newcommand{\const}{10^{(2k+4)^2+2k+3}\frac 1 {D'^{k}}\left(10^{-\sum_{j=3}^{2k+4}j^2}\right)^2}
\newcommand{\bb}{\mathbb}
\newcommand{\C}{\bb C}
\newcommand{\R}{\bb R}
\newcommand{\N}{\bb N}
\newcommand{\om}{\omega}
\newcommand{\Hdim}{\operatorname{Hdim}}
\newcommand{\nue}{\operatorname{NUE}}
\newcommand{\spanD}{\text{span}_{\Delta}}
\newcommand{\T}{\mathcal T}
\newcommand{\hh}{\mathcal H}
\newtheorem{Theorem}{Theorem}
\newtheorem{thm}[Theorem]{Theorem}
\newtheorem{lem}[Theorem]{Lemma}
\newtheorem{defin}[Theorem]{Definition}
\newtheorem{prop}[Theorem]{Proposition}
\theoremstyle{definition}
\newtheorem{rem}[Theorem]{Remark}
\newtheorem{cor}[Theorem]{Corollary}
\newtheorem{claim}[Theorem]{Claim}
\numberwithin{Theorem}{section}
\newtheorem*{lemma*}{Lemma}
\newtheorem*{question*}{Question}
\newtheorem*{theorem*}{Theorem}
\numberwithin{equation}{section}
\begin{document}

\title[Hausdorff Dimension of Non-Ergodic IETS]{The Hausdorff Dimension of Non-Uniquely Ergodic directions in $\hh(2)$ is almost everywhere $1/2$}
\author{Jayadev S.~Athreya}
\author{Jon Chaika}

\email{jathreya@illinois.edu}
\email{chaika@math.utah.edu}
\address{Department of Mathematics, University of Illinois Urbana-Champaign, 1409 W. Green Street, Urbana, IL 61801, USA}
\address{Department of Mathematics, University of Utah, 155 S 1400 E Room 233, Salt Lake City, UT 84112}

    \thanks{J.S.A. partially supported by NSF grant DMS 1069153, NSF grants DMS 1107452, 1107263, 1107367 ``RNMS: GEometric structures And Representation varieties" (the GEAR Network), and NSF CAREER grant DMS 1351853}
    \thanks{J.C. partially supported by NSF grants DMS 1004372, 1300550}
\begin{abstract} We show that for almost every (with respect to Masur-Veech measure) $\om \in \hh(2)$, the set of angles $\theta \in [0, 2\pi)$ so that $e^{i\theta}\om$ has non-uniquely ergodic vertical foliation has Hausdorff dimension (and codimension) $1/2$.
\end{abstract}
\maketitle
\tableofcontents
\section{Introduction}

A genus $g$ \emph{translation surface} $(X, \om)$ is a compact, genus $g$ Riemann surface together with a holomorphic one-form $\om$. This gives a structure of a flat metric away from a finite number of singular points, as integrating the one-form $\omega$ gives charts (away from zeros of $\omega$) to $\C$ where the transition functions between charts are translations. The zeros of $\om$ are singular points of the metric, and have cone angles $2\pi(k+1)$ at a zero of order $k$. Translation surfaces inherit a straight line unit speed flow in each direction $\theta \in [0,2\pi)$ (corresponding to the foliation $\mbox{Re}(e^{i\theta} \om) = 0$). These flows preserve Lebesgue measure on the surface. A key result on the ergodic properties of these flows was proved by Kerckhoff, Masur and Smillie~\cite{KMS}:

\begin{theorem*}\cite[Theorem 2]{KMS} For every translation surface the flow in almost every direction is uniquely ergodic with respect to Lebesgue measure.
\end{theorem*}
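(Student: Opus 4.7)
The plan is to prove that the set of directions $\theta \in [0, 2\pi)$ for which the flow on $\om$ in direction $\theta$ fails to be uniquely ergodic has Lebesgue measure zero, by translating the question into one about the Teichm\"uller geodesic flow on the stratum $\hh(\kappa)$ containing $\om$. Write $r_\theta$ for rotation by $\theta$ (so that the directional flow at angle $\theta$ on $\om$ is the vertical flow on $r_{-\theta}\om$), and $g_t = \operatorname{diag}(e^t, e^{-t})$ for the Teichm\"uller flow on the stratum.

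The argument would proceed in two main steps. First, I would prove a geometric non-unique-ergodicity criterion: if the vertical flow on $r_\theta \om$ is not uniquely ergodic, then the forward trajectory $\{g_t r_\theta\om : t \geq 0\}$ eventually leaves every compact subset of $\hh(\kappa)$. The intuition is that two distinct ergodic invariant measures are detected by disjoint invariant collections of horizontal rectangles; approximating these by cylinders on $g_t r_\theta\om$ produces saddle connections whose lengths shrink to zero as $t \to \infty$. Combined with the fact that compact subsets of strata are precisely those on which the shortest saddle connection admits a uniform lower bound, this forces the trajectory to diverge in the stratum.

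Second, I would establish a quantitative non-divergence estimate on the circle of rotations: for fixed $T > 0$ and small $\varepsilon > 0$,
\[
\Leb\bigl\{\theta \in [0, 2\pi) : \operatorname{sys}(g_T r_\theta \om) < \varepsilon\bigr\} \leq C \varepsilon^\alpha,
\]
where $\operatorname{sys}$ denotes the length of the shortest saddle connection and $C, \alpha > 0$ depend on $\om$. Applying this at a summable sequence $\varepsilon_n \to 0$ along an exponentially spaced sequence of times $T_n$, and invoking Borel--Cantelli on the circle, one concludes that Lebesgue-a.e. $\theta$ has trajectory recurrent to some compact subset of $\hh(\kappa)$. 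Combined with the criterion from the previous step, this yields unique ergodicity for almost every $\theta$.

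The hard part will be the quantitative non-divergence estimate. The natural approach is a Minkowski-style covering argument adapted to translation surfaces: one controls the measure of rotations under which many saddle connections become simultaneously short, exploiting the rigid geometric fact that nearly parallel short saddle connections on a translation surface cannot proliferate arbitrarily. I expect this to be done by induction on the complexity of collections of saddle connections, with a delicate summation over scales, and this constitutes the technical core of the proof.
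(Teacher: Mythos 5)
This is a background result cited from Kerckhoff--Masur--Smillie; the paper under review does not prove it, so I am comparing your proposal to the KMS argument itself. Your outline correctly identifies the two pillars of that argument: (i) Masur's criterion, that recurrence of the Teichm\"uller geodesic to a compact subset of the stratum forces unique ergodicity of the vertical foliation (you phrase the contrapositive: non-unique ergodicity forces divergence); and (ii) a quantitative estimate on the angular measure of directions in which saddle connections become short, proved by a covering argument exploiting the fact that short, nearly parallel saddle connections cannot accumulate. This is the correct skeleton.

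However, the way you propose to close the argument has a genuine gap. Your non-divergence estimate is pointwise in time: for a fixed $T$, $\Leb\{\theta : \operatorname{sys}(g_T r_\theta \om) < \varepsilon\} \le C\varepsilon^\alpha$. Applying this at times $T_n$ with $\varepsilon_n \to 0$ summable and invoking Borel--Cantelli yields only that, for a.e.\ $\theta$, $\operatorname{sys}(g_{T_n} r_\theta \om) \ge \varepsilon_n$ for all large $n$; since $\varepsilon_n \to 0$ this lower bound vanishes in the limit and is perfectly consistent with the trajectory leaving every compact set. On the other hand, if you fix $\varepsilon_n \equiv \varepsilon_0 > 0$ to aim for a single compact set $K_{\varepsilon_0}$, then the bound $C\varepsilon_0^\alpha$ is a constant and the Borel--Cantelli hypothesis fails. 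The actual KMS argument bounds, for fixed $\delta$, the measure of the set of $\theta$ whose trajectory has systole $< \delta$ for \emph{all} sufficiently large $t$, and then lets $\delta \to 0$; this requires an estimate that tracks the trajectory over intervals of time rather than at isolated times. The key geometric input (which you gesture at but do not use) is that when the identity of the shortest saddle connection changes along the geodesic, there must be an instant with two comparably short saddle connections in transverse directions, and this can happen for only a small measure of angles and only a controlled number of times before the systole would have to grow. That union bound over ``switch times'' is what replaces the naive Borel--Cantelli and is where the real work lies.
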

Moduli spaces of translation surfaces are stratified by their genus $g$ and the combinatorics of their singularities. We say a singularity has order $k$ if the angle is $2\pi(k+1)$. The Gauss-Bonnet theorem implies that the sum of orders of singularities on a genus $g$ surface is $2g-2$. Given a partition $\alpha = (\alpha_1, \ldots, \alpha_m) \in \N^m$, $\sum \alpha_i = 2g-2$, we define the stratum $\hh = \hh(\alpha)$ to be the moduli space of (unit-area) translation surfaces with singularity pattern $\alpha$. On each stratum $\hh$, there are coordinate charts to an appropriate Euclidean space, and pulling back Lebesgue measure yields a natural measure $\mu_{MV}$, known as \emph{Masur-Veech} measure. Similarly, pulling back Euclidean distance yields a (local) metric on flat surfaces in a given stratum. For each translation surface, there is a countable set of directions where the flow is not minimal (that is, there are non-dense infinite trajectories). Moreover, by a theorem of Masur-Smillie~\cite{MS} for almost every translation surface, there is an uncountable set of non-uniquely ergodic directions. Given a translation surface $\om$, let $$\nue(\om) : = \{\theta: \mbox{ vertical flow on }e^{i\theta} \om \mbox{ is non-uniquely ergodic}\}.$$
\begin{theorem*}\label{thm:MS} \cite[Main Theorem]{MS} In every stratum of translation surfaces $\hh(\alpha)$ of surfaces of genus at least 2 there is a constant $c = c(\alpha)>0$ such that for $\mu_{MV}$-almost every flat surface $\om \in \hh$ , $$\Hdim(\nue(\om)) = c.$$\end{theorem*}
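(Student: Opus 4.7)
The plan is to prove this in two nearly independent pieces: first, show that the function $f(\om) := \Hdim(\nue(\om))$ is almost everywhere constant on $\hh(\alpha)$ by invoking ergodicity; second, show that this almost-sure value is strictly positive by constructing explicitly, for a.e.\ $\om$, a fractal set of non-uniquely ergodic directions.

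For the first piece I would analyze how $\nue(\om)$ transforms under the $SL(2,\R)$-action. Rotations $r_\theta$ act on $\om$ by shifting the angle parameter, so trivially $f(r_\theta \om) = f(\om)$. The geodesic flow $g_t = \mathrm{diag}(e^t,e^{-t})$ preserves the vertical and horizontal foliations, and more generally transforms the direction $e^{i\theta}$ by the Möbius map $\theta \mapsto \theta'$ induced by the action of $g_t$ on $\mathbb{RP}^1$. Away from the two fixed points, this map is a real-analytic diffeomorphism on the circle of directions, hence bi-Lipschitz on each compact piece, and therefore preserves Hausdorff dimension. A routine verification then gives $\nue(g_t \om) = \phi_t(\nue(\om))$ with $\phi_t$ bi-Lipschitz, so $f(g_t \om) = f(\om)$ for every $t$ and every $\om$. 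Since the Teichmüller flow $(g_t)$ is ergodic with respect to $\mu_{MV}$ on $\hh(\alpha)$ (Masur, Veech), the $g_t$-invariant measurable function $f$ is a.e.\ constant, equal to some $c=c(\alpha)\in[0,1]$.

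The second piece, proving $c > 0$, is the main obstacle and the heart of the matter. I would use Masur's criterion: if the Teichmüller geodesic trajectory $g_t (r_\theta \om)$ is recurrent to a compact set in the stratum, then the direction $\theta$ is uniquely ergodic. Hence non-unique ergodicity requires the trajectory to escape every compact set, i.e.\ the surface to develop arbitrarily short saddle connections in the vertical direction of $g_t r_\theta \om$. For a generic $\om$ I would construct a Cantor set of such $\theta$'s as follows. Using the return map of $g_t$ to a large compact set and a Rauzy–Veech style induction, identify a family of ``bad'' intervals $I_n$ of directions of controlled length $\ell_n$ along which one can guarantee the appearance of a short saddle connection at a prescribed time $t_n$; this uses that short saddle connections can be produced by applying suitable unipotent perturbations to carefully chosen combinatorial data. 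Arrange the parameters so that each $I_{n}$ contains many disjoint subintervals $I_{n+1}$ (say, $N_n$ of them) with length ratio $\ell_{n+1}/\ell_n$ controlled, and intersect; standard mass distribution arguments then give a lower bound of the form $c \ge \liminf_n \log N_1\cdots N_n / \log(1/(\ell_1\cdots \ell_n))$, which one shows is positive by quantitative recurrence estimates for $g_t$.

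The serious difficulty, which I expect to absorb the bulk of the technical work, is making the second step uniform in the starting surface $\om$ on a positive measure set: one must show that the combinatorial constructions yielding NUE directions can be carried out for a positive measure set of $\om$ with uniform parameters, and then leverage ergodicity once more (or the invariance already established) to upgrade this to almost every $\om$. The counting and covering estimates needed to convert the branching construction into a Hausdorff dimension lower bound are delicate, and controlling the deviation between the natural combinatorial cover and the Euclidean metric on the circle of directions is where I would expect the proof to be most technical.
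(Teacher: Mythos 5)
Note first that the statement you are asked to prove is not proved in the paper at all; it is the Masur--Smillie theorem, cited directly from \cite{MS}, and the paper uses it as a black box. So there is no internal proof to compare against. Judged on its own merits, your proposal gets the first half right and has a real gap in the second half.

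Your first step --- that $f(\om)=\Hdim(\nue(\om))$ is $SL(2,\R)$-invariant because each element of $SL(2,\R)$ acts on the circle of directions by a real-analytic (hence locally bi-Lipschitz) diffeomorphism, and that ergodicity of the Teichm\"uller geodesic flow then forces $f$ to be a.e.\ constant --- is correct and is indeed how this part of the Masur--Smillie theorem goes. The present paper uses exactly this reasoning in reverse in \S\ref{sec:cor} to upgrade a positive-measure statement to a full-measure one.

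The gap is in the positivity step. You invoke Masur's criterion (recurrence of $g_t r_\theta \om$ implies unique ergodicity of $\theta$) and then propose to build a Cantor set of directions $\theta$ along which short saddle connections appear at prescribed times, concluding that these directions are non-uniquely ergodic. But the converse of Masur's criterion is false: divergence of $g_t r_\theta \om$ (equivalently, the development of arbitrarily short saddle connections) is necessary but \emph{not} sufficient for non-unique ergodicity. A divergent direction can perfectly well be uniquely ergodic, or even non-minimal. Producing the nested intervals $I_n$ and counting them is the easy part; what your sketch never supplies is a certificate that the resulting foliations admit at least two ergodic invariant measures. In the actual Masur--Smillie argument (and in the closely related constructions of Keane, Veech, Sataev, and in Cheung--Hubert--Masur) this is accomplished by exhibiting an explicit separating mechanism --- typically a slit or a sequence of separating saddle connections across which the flow transits slowly, so that two pieces of the surface carry distinct limit measures, with a summability/Borel--Cantelli condition on the cross-products of the relevant holonomy vectors guaranteeing that the leakage between the two pieces is negligible. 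In the present paper the analogous mechanism for IETs is Veech's criterion, recorded as Lemma~\ref{not ue}: one must force the nested Rauzy simplices $M(T,k)\Delta$ to collapse onto a nondegenerate line segment, not merely to shrink. Without an ingredient of this type, your construction only produces directions of divergent Teichm\"uller geodesics, which does not prove $c(\alpha)>0$.
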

\noindent We call the constant $c=c(\alpha)$ the \emph{Masur-Smillie constant} of the stratum of $\hh(\alpha)$. Masur~\cite{Mhdim} showed that $c(\alpha) \le 1/2$ for all $\alpha$. The main result of this paper is that for $\hh(2)$, the Masur-Smillie constant is $1/2$.
\begin{thm}\label{thm:main} For $\mu_{MV}$-almost every $\om \in \hh(2)$, $$\Hdim(\nue(\om))=\frac{1}{2} = 1 - \frac 1 2 .$$\end{thm}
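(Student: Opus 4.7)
Since Masur's bound $c(\alpha) \le 1/2$ already gives the upper bound, the task reduces to showing $\Hdim(\nue(\om)) \ge 1/2$ for $\mu_{MV}$-almost every $\om \in \hh(2)$. My plan is to construct, for generic $\om$, a Cantor-like subset $K(\om) \subset \nue(\om)$ and a probability measure $\mu_\om$ on $K(\om)$ satisfying the Frostman-type bound $\mu_\om(B(\theta,r)) \lesssim r^{1/2-\varepsilon}$ at every sufficiently small scale $r$; the mass distribution principle then yields $\Hdim K(\om) \ge 1/2 - \varepsilon$ for every $\varepsilon > 0$.

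The construction exploits the special cylinder structure of $\hh(2)$: a generic $\om \in \hh(2)$ admits, in many directions, a cylinder decomposition along whose core curve the surface nearly splits into two slit tori of positive area. Directions in which such a near-splitting is extremely tight are the natural seeds for non-unique ergodicity via the Masur--Smillie criterion, provided the gluing-width parameter decays quickly enough along a sequence of such splittings. I would iterate as follows: given $\om$, find a finite family of slope intervals $I_1^{(1)}, \ldots, I_{N_1}^{(1)}$ of length $\delta_1$ in each of which $e^{i\theta}\om$ realizes such a tight near-splitting. Within each $I_j^{(1)}$ apply Teichm\"uller flow long enough to expand the split and renormalize, then repeat to extract children $I_{j,1}^{(2)}, \ldots, I_{j,N_2}^{(2)}$ of length $\delta_2$, continuing inductively. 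Every direction in the nested intersection $\bigcap_n \bigcup_{\mathbf{j}} I_{\mathbf{j}}^{(n)}$ then admits arbitrarily tight near-splittings along a sequence of Teichm\"uller-renormalized scales, and the usual summability argument produces two mutually singular invariant probability measures for the vertical flow, placing this intersection inside $\nue(\om)$.

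The dimension calculation then boils down to balancing branching against contraction: the Frostman exponent produced by the construction equals $\liminf_n \log(N_1 \cdots N_n)/\log(1/\delta_n)$, so reaching $1/2$ requires saturating the branching at each stage against the inverse scale. The explicit constants in the preamble (of the form $10^{(2k+4)^2+\cdots}$) reflect the sharpness demanded: at each stage one must extract roughly $\delta_n^{-1/2+o(1)}$ children while preserving, uniformly, the geometric features needed for the next iteration (comparable areas of the two pieces, a short core curve, and the ability to renormalize). Executing this balance at every level gives $\Hdim K(\om) \ge 1/2-\varepsilon$, and combined with the upper bound yields the theorem.

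The main obstacle is promoting the construction from a single well-behaved $\om$ to $\mu_{MV}$-almost every surface. This requires quantitative recurrence of the Teichm\"uller geodesic flow to a compact piece of $\hh(2)$ on which the branching estimates hold uniformly, together with effective genericity estimates on the directions at a typical surface which admit useful near-splittings. The technical heart is a dynamical Borel--Cantelli scheme coupling these two ingredients: one must guarantee, simultaneously and at almost every surface, infinitely many branching opportunities, uniform lower bounds on the number of children per stage, and that the resulting limit directions genuinely fail unique ergodicity. It is this coupling of the ergodic theory of the Teichm\"uller flow on $\hh(2)$ with the combinatorial bookkeeping of a sharp Cantor construction that I expect to be the most delicate step.
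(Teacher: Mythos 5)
Your proposal takes a genuinely different route from the paper, and it has a gap that the paper's route is specifically designed to circumvent. You propose to work directly on a fixed surface $\om$, building a Cantor set of \emph{directions} $\theta$ via near-splittings into slit tori renormalized by Teichm\"uller flow, and then to promote this from one surface to almost every surface by a dynamical Borel--Cantelli scheme. This is essentially the Cheung--Hubert--Masur strategy. The difficulty you flag at the end --- guaranteeing, for generic $\om$, simultaneously enough branching at every renormalization scale \emph{and} non-unique ergodicity of the limit directions --- is exactly the obstruction that confined CHM to a measure-zero family in $\hh(1,1)$ and that produced a $\{0,1/2\}$ dichotomy there rather than a uniform answer. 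Nothing in your sketch explains how one obtains, for $\mu_{MV}$-generic $\om$, the $\delta_n^{-1/2+o(1)}$ children per stage with uniformly controlled geometry; this is the substantive content of the theorem, not a technical coupling to be handled by Borel--Cantelli.

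The paper avoids this obstruction by never working on a single surface. It builds the Cantor set once and for all in the IET simplex $\Delta_4$, using explicit Rauzy paths and Veech's criterion (Lemma \ref{not ue}) to produce a $\frac52$-dimensional set of non-uniquely ergodic $4$-IETs (Theorem \ref{thm:iet}). Then, instead of renormalizing directions on a fixed surface, it uses the Minsky--Weiss theorem that horocycle orbits project to an open set of \emph{affine lines} in $\Delta_4$, and Mattila's slicing theorem, to conclude that a positive $m_\ell$-measure set of such lines meets $\nue(4321)$ in dimension $\geq \tfrac12$; this pulls back to a positive $\mu_{MV}$-measure set of $\om$ with $\Hdim(\nue(\om))\geq \tfrac12$. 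The upgrade to full measure is not a Borel--Cantelli argument but a one-line appeal to $SL_2(\R)$-ergodicity of $\mu_{MV}$ together with $SL_2(\R)$-invariance of $\Hdim(\nue(\om))$. So your upper bound and your general ``Frostman at exponent $1/2$'' framing match the paper, but your lower-bound mechanism is different and leaves open precisely the step --- uniform branching for generic $\om$ --- that the detour through $\Delta_4$ and the slicing theorem is there to replace.
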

\noindent By our methods we also obtain the Hausdorff dimension of the set of translation surfaces in $\hh(2)$ where the vertical flow is non-uniquely ergodic. The real dimension of $\hh(2)$ is $7$, and we have: 
\begin{thm}\label{thm:stratum} $$\Hdim\left(\{\om \in \hh(2): \mbox{ vertical flow on } \om \mbox{ is non-uniquely ergodic}\}\right) = \frac{13}{2} = 7 - \frac 1 2.$$ 
\end{thm}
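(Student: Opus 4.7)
\emph{Proof plan for Theorem \ref{thm:stratum}.} The plan is to deduce this from Theorem \ref{thm:main} via a slicing argument that exploits the $SO(2)$-action $\om \mapsto e^{i\theta}\om$ on $\hh(2)$: this action preserves the stratum and the unit-area condition, and acts freely off a measure-zero locus of surfaces with nontrivial rotational symmetry. I would cover $\hh(2)$ by countably many coordinate patches of the form $U \times I$ in which the action is trivialized, where $U$ is a local $6$-dimensional transversal to the rotation orbits, $I$ is an arc of $S^1$, and $(\om_0, \theta) \mapsto e^{i\theta}\om_0$ is bi-Lipschitz onto its image. In such a chart the set $NUE := \{\om \in \hh(2) : \om \text{ has NUE vertical flow}\}$ becomes the ``graph'' $\{(\om_0, \theta) : \theta \in \nue(\om_0)\}$, because rotating $\om_0$ by $\theta$ turns its direction-$\theta$ flow into the vertical flow of $e^{i\theta}\om_0$.

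For the \emph{lower bound} $\Hdim(NUE) \ge 13/2$, I would apply Theorem \ref{thm:main} as a black box. The property ``$\Hdim(\nue(\om)) = 1/2$'' is $SO(2)$-invariant (since $\nue(e^{i\theta}\om_0)$ is a translate of $\nue(\om_0)$), so by Fubini a full $6$-dimensional Lebesgue measure set of $\om_0 \in U$ satisfies $\Hdim(\nue(\om_0)) = 1/2$. Marstrand's slicing theorem in its lower-bound form---applied after straightening the smooth rotation foliation to one by parallel lines in the bi-Lipschitz chart---then gives $\Hdim(NUE \cap (U \times I)) \ge 1/2 + 6 = 13/2$, and a countable union over charts yields the bound globally.

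For the \emph{upper bound} $\Hdim(NUE) \le 13/2$, I would strengthen Masur's pointwise bound $\Hdim(\nue(\om)) \le 1/2$ to a uniform quantitative covering: for each $\delta > 0$, a constant $C_\delta$ depending only on $\hh(2)$ such that for every $\om$, $\nue(\om)$ is covered by at most $C_\delta\,\epsilon^{-1/2 - \delta}$ arcs of length $\epsilon$. Tiling $U$ by $O(\epsilon^{-6})$ balls of radius $\epsilon$ and pairing each with the uniform fiberwise cover gives a cover of $NUE$ by $O(\epsilon^{-13/2 - \delta})$ balls of radius $\epsilon$, hence $\Hdim(NUE) \le 13/2 + \delta$, and sending $\delta \to 0$ concludes.

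The main obstacle is the uniform covering needed in the upper bound: Masur's original argument \cite{Mhdim} produces its dimension bound via estimates on Teichm\"uller geodesic excursions and is naturally formulated on a full-measure subset. Extracting combinatorial constants uniform over \emph{all} of $\hh(2)$ requires revisiting those estimates to check that nothing degenerates on surfaces with small systole or unusual geometry; alternatively, the paper's own construction (which presumably gives the matching upper bound of Theorem \ref{thm:main} with uniform constants on a sufficiently large set) can be lifted to the stratum via the same product-chart picture. By comparison the lower bound is essentially formal once the $SO(2)$-trivializing coordinates are set up, as it invokes only Theorem \ref{thm:main} and the classical Marstrand slicing inequality.
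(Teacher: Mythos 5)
Your proposal takes a genuinely different route from the paper, and the route you choose runs into exactly the obstacle you flag.

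The paper deduces Theorem~\ref{thm:stratum} from Theorem~\ref{thm:iet} (the IET theorem, $\Hdim(\nue(4321)) = 5/2$) rather than from Theorem~\ref{thm:main}. It uses the IET projection $\T : \hh(2) \to \Delta_4$: $\T$ defines a \emph{local product structure} on $\hh(2)$ with $4$-dimensional fibers, and the NUE property depends only on the $\Delta_4$-coordinate. So locally $NUE \cong \nue(4321) \times W$ for an open $W \subset \R^4$, and both bounds for $\Hdim(NUE)$ come for free from the product formula $\Hdim(X) + \Hdim(\R^n) \le \Hdim(X \times \R^n) \le \Hdim(X) + \overline{\dim}_B(\R^n)$, giving $5/2 + 4 = 13/2$ exactly. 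The crucial point is that the paper chooses a foliation along which $NUE$ is \emph{literally a product}, so no slicing or uniformity issues arise.

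Your plan slices instead by the $1$-dimensional $SO(2)$-orbits and invokes Theorem~\ref{thm:main}. Your lower bound is sound: in the trivializing chart $NUE$ is a graph over $U \times I$, the a.e.\ statement of Theorem~\ref{thm:main} together with $SO(2)$-invariance gives full-measure good slices, and Marstrand's slicing inequality gives $\Hdim \ge 6 + 1/2$. But the upper bound, as you acknowledge, is a genuine gap. The set $NUE$ is \emph{not} a product in the $SO(2)$-trivializing coordinates ($\nue(\om)$ varies with $\om$), so you cannot appeal to a product formula. You instead need a uniform-in-$\om$ covering estimate of the form ``$\nue(\om)$ is covered by $C_\delta\,\epsilon^{-1/2-\delta}$ arcs of length $\epsilon$.'' This is strictly stronger than Masur's dimension bound, which gives no control on box-counting constants and certainly no uniformity over all $\om$ (including those with arbitrarily short saddle connections). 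Neither Theorem~\ref{thm:main}, which is an a.e.\ Hausdorff-dimension statement, nor Theorem~\ref{thm:mbound}, a pointwise Hausdorff-dimension bound, supplies such a covering. In short, the choice of foliation is the whole game: the $\T$-fibers make the upper bound trivial, while the $SO(2)$-fibers make it essentially a new, harder theorem.
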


\begin{rem}This is the first time the Masur-Smillie constant for a stratum has been identified. Earlier, Cheung, Hubert, and Masur ~\cite{CHM} identified the Hausdorff dimension of non-uniquely ergodic directions for the historically important example of two symmetric tori glued along a slit. The Hausdorff dimension is either $\frac 1 2 $ or 0 and they gave an explicit description of these two cases based on the diophantine properties of the length of the slit. Earlier,  Cheung \cite{yit thesis} had found an example of two symmetric tori glued along a slit where the set of non-uniquely ergodic directions has Hausdorff dimension $\frac 1 2$. In the paper's appendix, Boshernitzan showed a residual set of these examples have that the set of non-uniquely ergodic directions has Hausdorff dimension $0$. All of these results deal with a measure zero subset of the stratum $\hh(1,1).$

Prior to the paper \cite{KMS}, Masur \cite{Masur1} and Veech \cite{Veech} independently showed that for almost every flat surface the flow in almost every direction was uniquely ergodic with respect to Lebesgue measure. Constructions of non-uniquely ergodic IETs are due to Sataev \cite{Sataev}, Keane~\cite{Keane2} and Keynes-Newton~\cite{KeynesNewton}, and, anachronistically, Veech~\cite{Veech69}.
\end{rem}

\noindent To prove Theorems \ref{thm:main} and \ref{thm:stratum}, we establish a related theorem for interval exchange transformations (IETs) (see Section \ref{sec:background} for the definition of IETs). Given a permutation $\pi \in S_m$ on $m$-letters, we parameterize the set of IETs which have $\pi$ as a permutation by the unit simplex $$\Delta_m : = \left\{\lambda \in \R_+^m: \sum_{i=1}^m \lambda_i = 1\right\}.$$ We denote the IET with length vector $\lambda$ and permutation $\pi$ by $T_{\lambda, \pi}$. Note that the real dimension of $\Delta_m$ is $m-1$.
\begin{thm} \label{thm:iet} The Hausdorff dimension of the set $\nue(4321)$ of non-uniquely ergodic 4-IETs of $[0,1)$ with permutation $\pi_0 = (4321)$ is $\frac 5 2$. That is $$\Hdim(\nue(4321)) = \Hdim\left(\left\{\lambda \in \Delta_4: T_{\lambda, \pi_0} \mbox{ is non-uniquely ergodic } \right\}\right) = \frac 5 2 = 3 - \frac 1 2.$$
\end{thm}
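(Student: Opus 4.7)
The plan is to prove matching upper and lower bounds on $\Hdim(\nue(4321))$ via Rauzy--Veech induction on $\Delta_4$, exploiting the standard correspondence between 4-IETs with permutation $\pi_0 = (4321)$ and abelian differentials in $\hh(2)$.

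\emph{Upper bound.} For $\Hdim(\nue(4321)) \leq 5/2$, the plan is to adapt Masur's covering argument from \cite{Mhdim}. A non-uniquely ergodic IET must generate a Rauzy--Veech renormalization sequence exhibiting a splitting phenomenon: at infinitely many stages $n$ the alphabet $\{1,2,3,4\}$ decomposes into two classes whose renormalized total lengths remain bounded away from $0$ and $1$. This splitting condition yields an efficient cover of $\nue(4321)$ by Rauzy--Veech cylinders in $\Delta_4$, for which the $\delta$-Hausdorff sums converge for every $\delta > 5/2$. Alternatively, one may try to transfer Masur's codimension-$\tfrac{1}{2}$ result directly by using that the suspension construction locally identifies $\Delta_4$ (times a transversal to the $\R_{>0}$-action) with an open piece of $\hh(2)$, so that a Marstrand-type slicing of a $\hh(2)$ upper bound gives the IET upper bound.

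\emph{Lower bound.} The main task is to construct a Cantor set $K \subset \nue(4321)$ with $\Hdim(K) \geq 5/2$. The strategy is to build a tree of parameter cylinders in $\Delta_4$ indexed by admissible words in a sub-shift of the Rauzy--Veech graph of $\pi_0$, where at each node one branches into finitely many admissible continuations so that for any $\lambda$ lying in every level of some infinite branch, the IET $T_{\lambda, \pi_0}$ carries a sequence of disjoint pairs of Rokhlin towers $(A_n, B_n)$ with Lebesgue measures bounded in a fixed interval $[c, 1-c]$. This is the classical Keane--Masur--Smillie mechanism forcing non-unique ergodicity. Since $\Delta_4$ has real dimension $3$, I would arrange the cylinders to fill nearly-cubical regions in $2$ of the $3$ coordinate directions (contributing dimension $2$ freely from the transverse Rauzy--Veech freedom) while being thin slabs whose widths in the remaining transverse direction decay at a geometric rate matched to the branching count (contributing dimension $\tfrac{1}{2}$, encoding the diophantine splitting condition). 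The Hausdorff dimension lower bound then follows from the mass distribution principle applied to the natural measure $\nu$ on $K$ obtained by equidistributing mass over children at each branching level.

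\emph{Main obstacle.} The delicate point is achieving exactly $\tfrac{1}{2}$ in the thin direction. This requires the Rauzy--Veech moves along the construction to be selected so that cylinder widths contract at precisely the geometric rate whose logarithm equals the logarithm of the branching number, matching the critical exponent $\tfrac{1}{2}$ that appears in Masur's upper bound. Controlling this demands sharp distortion estimates for Lebesgue measure on $\Delta_4$ under long Rauzy--Veech renormalizations, a quantitative count of admissible paths at each scale, and disjointness estimates among the children cylinders to make the mass distribution principle effective. In effect one must show that the Rauzy--Veech construction can be made near-optimal, saturating the upper bound $5/2$ rather than yielding a strictly smaller dimension. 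This is the heart of the proof and also the technical input needed to bootstrap Theorem \ref{thm:iet} into Theorems \ref{thm:main} and \ref{thm:stratum} via the suspension correspondence.
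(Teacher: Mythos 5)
Your high-level plan (lower bound via a Frostman/mass-distribution argument on Rauzy--Veech cylinders, upper bound via Masur) matches the paper's architecture, but two of your concrete claims would not survive contact with the details, and the part you flag as the ``main obstacle'' is where the paper's actual content lives.

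On the upper bound: the paper does \emph{not} adapt Masur's covering argument directly to IETs, and your alternative ``slicing an $\hh(2)$ upper bound'' has the logical arrow pointing the wrong way --- there is no a priori Hausdorff bound on the stratum-level NUE locus; Theorem~\ref{thm:stratum} is a \emph{consequence} of Theorem~\ref{thm:iet}, not an input. What the paper actually uses is Masur's bound $\Hdim(\nue(\om))\le 1/2$ in \emph{direction} space, bridged to $\Delta_4$ via Minsky--Weiss (\cite[Theorem~5.3]{MW}): horocycle trajectories project to affine lines in $\Delta_4$, this correspondence is locally affine with open image, and Mattila's slicing theorem (\cite[Theorem~10.8]{mattila}) then shows that $\Hdim(\nue(4321)) > 5/2$ would force a positive-measure (hence, by $SL_2(\R)$-ergodicity of $\mu_{MV}$, full-measure) set of $\om$ with $\Hdim(\nue(\om))>1/2$, contradicting Masur. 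Your sketch omits the Minsky--Weiss step entirely, and that is the essential ingredient that lets you move between lines in $\Delta_4$ and directional sets on flat surfaces.

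On the lower bound: your geometric picture is ``nearly-cubical in $2$ of the $3$ directions plus dimension $1/2$ in the thin direction,'' i.e.\ a $2+\tfrac12$ decomposition. The paper's cylinders are the opposite shape. Veech's criterion (Lemma~\ref{not ue}) says that a minimal IET with exactly $2$ ergodic measures has $\bigcap_k M(T,k)\Delta$ equal to a $1$-simplex, so the cylinders $M_k\Delta$ must limit onto a \emph{line segment}: they are parallelepipeds with one long side of length bounded below ($\ge 1/900$, Proposition~\ref{line seg}) connecting the $\{C_1,C_2\}$ cluster to the $\{C_3,C_4\}$ cluster, and two \emph{shrinking} transverse directions (the $C_1$--$C_2$ and $C_3$--$C_4$ separations, Proposition~\ref{angle bound}). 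The dimension arises as $1+\tfrac32$: one from the persisting segment direction, and $\tfrac32$ from the branching-versus-shrinking ratio in the two transverse directions (leading coefficients $12$ and $-\tfrac83$, giving $\tfrac{12}{3\cdot 8/3}=\tfrac32$ via Proposition~\ref{prop:abstract}). A construction that tried to fill $2$-dimensional cubes would violate the Veech criterion for a genus-$2$ surface, where at most $2$ ergodic measures are possible. Finally, the quantitative control you defer to the ``main obstacle'' is exactly what the paper's $N_1,N_2$-moves and $D$-balanced $SL_2(\Z_+)$ blocks are engineered to provide: the balancedness (Lemmas~\ref{sl2 angle}--\ref{fixed r}) gives two-sided angle estimates at every step, and the counting of balanced matrices (Proposition~\ref{count}) gives the branching rate. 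Without a device of this kind the mass-distribution principle has no teeth, so the proposal as written has a genuine gap at its center.
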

\begin{rem}Theorems \ref{thm:main}, \ref{thm:stratum} and \ref{thm:iet} all state that the \emph{Hausdorff codimension} of non-uniquely ergodic objects is $\frac 1 2$.
\end{rem}
By combining work of Masur \cite{Mhdim}, Minsky-Weiss \cite{MW} and a basic result in metric geometry \cite{mattila} we obtain the following general result:
\begin{thm}\label{ub} The set of non-uniquely ergodic $n$-IETs has Hausdorff codimension at least $\frac 12 $. The set of flat surfaces in any (connected component of any) stratum whose vertical flow is not uniquely ergodic has Hausdorff codimension at least $\frac 12$.
\end{thm}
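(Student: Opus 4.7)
The plan is to combine the three ingredients announced in the theorem. The first is a dimension bound on NUE directions through a single surface: Masur \cite{Mhdim} proved that for $\mu_{MV}$-almost every $\om$ in a stratum, $\Hdim(\nue(\om)) \leq \tfrac{1}{2}$. The second is work of Minsky--Weiss \cite{MW} on non-divergence of horocycle orbits in moduli space which, via Masur's criterion for unique ergodicity, upgrades Masur's bound to every translation surface in every stratum: $\Hdim(\nue(\om)) \leq \tfrac{1}{2}$ for all $\om$. The third is a Fubini-type slicing inequality for Hausdorff dimension from \cite{mattila}: if $A \subset \R^d$ is Borel and is fibered by a Lipschitz $1$-parameter family of arcs $\{\gamma_y\}$ with $\Hdim(A \cap \gamma_y) \leq s$ for every $y$, then $\Hdim(A) \leq (d-1) + s$.

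For the stratum statement, fix a connected component of $\hh(\alpha)$ of real dimension $d$ and consider the smooth $SO(2)$-action $\theta \cdot \om = e^{i\theta}\om$. Away from the measure-zero locus of surfaces with extra symmetry, its orbits foliate the component by smoothly embedded circles, and the NUE locus meets the orbit of $\om$ in $\{e^{i\theta}\om : \theta \in \nue(\om)\}$, which is bi-Lipschitz identified with $\nue(\om) \subset [0, 2\pi)$. By the first two ingredients this intersection has Hausdorff dimension at most $\tfrac{1}{2}$, so the slicing principle yields
\[
\Hdim\bigl(\{\om : \om \text{ has NUE vertical flow}\}\bigr) \leq (d-1) + \tfrac{1}{2},
\]
i.e., codimension at least $\tfrac{1}{2}$.

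For the IET statement, the analogous fibration of $\Delta_n$ comes from moving a chosen slit (or, equivalently, from the Masur--Veech zippered rectangles construction): each $1$-parameter subfamily of IETs it produces is bi-Lipschitz identified with the $SO(2)$-orbit of a suspension. By the uniform bound $\Hdim(\nue(\om)) \leq \tfrac{1}{2}$, the NUE set meets each such subfamily in a set of Hausdorff dimension at most $\tfrac{1}{2}$, so the slicing principle applied to this $(n-2)$-dimensional foliation of $\Delta_n$ gives $\Hdim$ at most $(n-2) + \tfrac{1}{2} = (n-1) - \tfrac{1}{2}$, i.e., codimension at least $\tfrac{1}{2}$.

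The step demanding most care is the verification that the slicing principle from \cite{mattila} applies: one must check that the projection transverse to the $SO(2)$-foliation (respectively to the $1$-parameter IET subfamily) is Lipschitz with Lipschitz section, so that Hausdorff-dimension bounds on fibers transfer to a bound on the total set. In the smooth stratum this is immediate from the $SO(2)$-action being smooth and free off a low-dimensional locus; in the IET parametrization it follows from the smoothness of the zippered rectangles coordinates, together with the observation that the exceptional symmetric locus has sufficiently small dimension to be absorbed in the codimension estimate.
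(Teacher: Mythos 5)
The key step in your argument does not hold: the ``Fubini-type slicing inequality for Hausdorff dimension'' you invoke --- that if $A \subset \R^d$ is fibered by a single Lipschitz family of arcs $\{\gamma_y\}$ with $\Hdim(A\cap \gamma_y)\le s$ for every $y$, then $\Hdim(A) \le (d-1)+s$ --- is false. The standard counterexample is the graph of one-dimensional Brownian motion in $\R^2$: it has $\Hdim = 3/2$, yet each vertical line meets it in a single point, so every slice has dimension $0$, and your inequality would give the false bound $\Hdim \le 1$. The difficulty is that a set can be ``spread out'' transversally to a \emph{fixed} foliation in a way that hides dimension from the slices. The Mattila result the paper actually uses (Theorem~\ref{mattila}, \cite[Theorem 10.8]{mattila}) is a Grassmannian statement: if $C_t(A)>0$, then for $\gamma_{n,n-m}$-\emph{almost every} subspace $W$, a positive measure set of translates $W_a$ have $C_{t-m}(A\cap W_a)>0$. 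To apply its contrapositive you must control the slices along a positive-measure \emph{set of directions}, not just one foliation. This is precisely where Minsky--Weiss \cite[Theorem 5.3]{MW} enters the paper: their theorem exhibits an \emph{open} (hence positive-measure) set of directions in $\Delta$ whose lines are images of horocycle trajectories; hence, if the Mattila theorem produces lines of high-dimensional intersection in almost every direction, some of them are Minsky--Weiss lines and therefore are parametrized by directions on a single translation surface, where Masur's $\frac12$ bound applies. Your $SO(2)$-orbit foliation (one foliation) cannot be fed into this theorem.

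Two smaller issues: (i) Masur's theorem already gives $\Hdim(\nue(\om))\le \frac12$ for \emph{every} translation surface, not merely almost every one; the ``upgrade'' via Minsky--Weiss non-divergence you propose is not needed and is not how \cite{MW} is used here. (ii) For the stratum version, the paper does not slice at all --- it uses the local product structure $\hh(\alpha) \cong (\mbox{fiber}) \times \Delta$ via the return-map coordinate $\T$, observing that non-unique ergodicity depends only on the $\Delta$-factor, and then the product inequality for Hausdorff dimension transfers the IET codimension bound to the stratum. That route avoids the subtleties of slicing in the stratum entirely. Repairing your proof would require replacing the single $SO(2)$-foliation by the open family of Minsky--Weiss horocycle lines (to get a positive-measure set of slicing directions) and then invoking the genuine Mattila theorem rather than the false single-foliation variant.
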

\subsection{Outline of proof}
Masur's theorem \cite{Mhdim}, together with standard results in metric geometry, provides the upper bound. For the lower bound, we generate specific paths in Rauzy induction (see Section \ref{sec:rauzy}) that by a criterion of Veech \cite{Veech78} (Lemma \ref{not ue} in this paper) give minimal and non-uniquely ergodic IETs. The cylinder sets of these paths have nice geometric properties (Section \ref{sec:properties}). This allows us to construct a measure carried on these non-uniquely ergodic IETs that by Frostman's Lemma shows they have Hasudorff dimension $\frac 5 2$ (Sections \ref{sec:frostman}  and \ref{sec:lower}). To prove Theorem \ref{thm:stratum} we appeal to a standard decomposition of the stratum into stable and unstable foliations. The property that the vertical flow is non-uniquely ergodic depends only on the unstable coordinate and reduces the problem to IETs. To prove Theorem \ref{thm:main} we use a result of Minsky and Weiss \cite{MW} that varying directions on a fixed flat surface gives lines in IET space (technically, they and also this paper consider the horocycle through a translation surface). Having enough lines implies by standard results in metric geometry that many of the lines intersect non-uniquely ergodic IETs in Hausdorff dimension $\frac 1 2$. This establishes Theorem \ref{thm:main} for a positive measure set of flat surfaces. There is an $SL_2(\mathbb{R})$ action on each stratum which is ergodic (on connected components of strata). The Hausdorff dimension of non-uniquely ergodic directions is invariant under this action. The ergodicity of the $SL_2(\mathbb{R})$ action lets us go from positive to full measure and proves Theorem \ref{thm:main}.

\medskip
\noindent\textbf{Acknowledgements:} We would like to thank Michael Boshernitzan, Yitwah Cheung, Marianna Csornyei, Howard Masur, and Jeremy Tyson for useful discussions. J.S.A. would like to thank the University of Chicago and Yale University for their hospitality, and J.C. would like to thank the University of Illinois Urbana-Champaign and Yale University for their hospitality.  We would like to thank Mathematisches Forschungsinstitut Oberwolfach (MFO) and the organizers of the workshop ``Flat Surfaces, and Dynamics on Moduli Spaces", March 2014. We would like to thank the anonymous referee for their patient and careful reading which has vastly improved the clarity of the paper.

\section{Background material}\label{sec:background}

\subsection{Our spaces} This section recalls standard material which is treated in, for example, Zorich's survey \cite{Zorichsurvey}. 
A translation surface can be given by a union of polygons $P_1 \cup \dots \cup P_n$ where each $P_i \subset \C$, and that each side of each $P_i$ is glued to exactly one other (parallel) side by a translation, and the total resulting angle at each vertex is an integer multiple of $2 \pi$. Translation surfaces can be organized by the number and order of these singularities, that is by integer partitions $\alpha$ of $2g-2$, where $g$ is the genus of the surface, yielding \emph{strata} $\hh(\alpha)$. Since translations are holomorphic, and preserve the one-form $dz$, we obtain a complex structure and a holomorphic differential $\omega$ on the identified surface, which away from zeros is locally $dz$. The zeroes of the differential will be at the identified vertices with total angle greater than $2\pi$, and the order of the zero is equal to the excess angle, that is $\omega = z^{k} dz$ in a neighborhood of a point with total angle $2\pi(k+1)$. This paper concerns translation surfaces in the stratum $\hh(2)$, that is, genus $2$ surfaces with one singularity with angle $6 \pi$. Kontsevich-Zorich \cite{KZ} classified the connected components of strata. There are at most three and in our case there is only one, that is, $\hh(2)$ is connected.

By varying the sides of the polygons $P_i$ one changes the flat surface. This gives \emph{local coordinates} on strata (modeled on relative cohomology of the surface with respect to the singularities) which give the \emph{Masur-Veech} measure $\mu_{MV}$. $SL_2(\mathbb{R})$ acts on strata via linear action on the polygons $P_i$. Masur \cite{Masur1} and Veech \cite{Veech} showed that this action is ergodic with respect to $\mu_{MV}$ on connected components of the stratum.  On any translation surface, we have the straight line flow given by flow in the vertical direction in $\C$, and the flow in direction $\theta$, which is the vertical flow on the surface $e^{i\theta} \om$. For any translation surface, the first return map of the flow in a fixed direction to a transverse interval gives a special map of the interval, known as an \emph{interval exchange transformation}.

\begin{defin}Given $\lambda=(\lambda_1,\lambda_2,\ldots,\lambda_d)$
where $\lambda_i > 0$, we obtain $d$ sub-intervals of the
interval $[0,\underset{i=1}{\overset{d}{\sum}} \lambda_i)$: $$I_1=[0,\lambda_1) ,
I_2=[\lambda_1,\lambda_1+\lambda_2),\ldots,I_d=[\lambda_1+\ldots \lambda_{d-1}, \lambda_1+\ldots+\lambda_{d-1}+\lambda_d).$$ Given
 a permutation $\pi$ on  the set $\{1,2,\ldots,d\}$, we obtain a d-\emph{Interval Exchange Transformation} (IET)  $ T \colon [0,\underset{i=1}{\overset{d}{\sum}} \lambda_i) \to
 [0,\underset{i=1}{\overset{d}{\sum}} \lambda_i)$ which exchanges the intervals $I_i$ according to $\pi$. That is, if $x \in I_j$ then $$T(x)= x - \underset{k<j}{\sum} \lambda_k +\underset{\pi(k')<\pi(j)}{\sum} \lambda_{k'}.$$
 \end{defin}
\noindent For a small enough neighborhood in the space of translation surfaces $U$, one can locally fix a transversal where the IET has $2g+k-1$  intervals, where  $g$ is the genus  and $k$ is the number of singularities of the translation surface. This provides a  map $\T:U \rightarrow \R_+^{2g+k-1}$. This is a (locally) Lipshcitz map from $U$ with the metric given by coordinates to $\R_+^{2g+k-1}$ with the Euclidean metric. In fact, it is still locally Lipschitz if we compose it with the natural map $\lambda \mapsto \frac{\lambda}{|\lambda|}$, (where $|\lambda| = \sum \lambda_i$) to obtain a map from $U$ to the simplex $\Delta_{2g+k-1}: = \{\lambda \in \R_+^{2g+k-1}: \sum \lambda_i  = 1\}$.

\subsection{Rauzy induction}\label{sec:rauzy} The proof of our main result (and indeed many results on ergodicity of IETs) uses in a crucial fashion the Rauzy induction renormalization procedures for IETs, involving induced maps on certain subintervals, and closely related to Teichm\"uller geodesic flow. Our treatment of Rauzy induction will be the same as in \cite[Section 7]{Veech}. For further details of the procedure (and much more on IETs) we refer the interested reader, to, e.g.~\cite{Yoccoz}, for an excellent survey. 
 \begin{figure}[h!]\caption{The Rauzy Class of (4321). Dashed arrows represent `A'-moves, and solid `B'-moves .\medskip}\label{fig:rauzy}
\begin{tikzpicture}
\path[dotted,->]  (-.6,0) edge (-2.5, 1.7) 
node[left=1.4 cm, above=1.4 cm, text width= 3cm]{(2431)};
\path[dotted,->] (-2.9, 1. 4) edge (-2.9,.4);
\path[dotted,->] (-2.3,0) edge (-.9,0)
node[right=.35 cm, above=-.2 cm, text width=3cm]{(3241)};
\path[->] (-3.5,.17) edge [loop left] (-4,.15);
\path[->](-3.5,1.6) edge[bend left] (-5,1.6);
\path[->](-5,1.7) edge[bend left] (-3.5,1.7)
node[left= -.35 cm, above=-.35 cm, text width=3cm]{(2413)};
\path[dotted,->] (-6.2,1.6) edge[loop left](-6.2,1.6);
\path[->] (2,0) edge(.2,0);
\path[->](2.5,1.7) edge (2.5,.5)
(2.5,.5) node[ right= -1.9 cm, above=-.7 cm, text width=3 cm] {(4321)}
node[ right=1.2 cm, above=-.7 cm, text width=3cm]{(4213)}
node[ right=1.2 cm, above= 1.1 cm, text width=3cm]{(4132)};
\path[dotted,->] (3.2,.1) edge[loop right] (2.3,-.7);
\path[->](.2,.2) edge (2.2,1.9);
\path[dotted, ->] (3.2,2.0) edge[bend left] (4.5,2.0) 
node[ right=3 cm, above=-.3 cm, text width=3cm]{(3142)};
\path[dotted,->]  (4.5,1.9) edge[ bend left] (3.2,1.9);
\path[->](5.7,2.0) edge[loop right] (6,2);
\end{tikzpicture}
\end{figure}

\begin{figure}[h!]\caption{Rauzy matrices for the edges in the Rauzy graph of $(4321)$. We divide into left and right sides and note the symmetries. \bigskip}\label{table:matrix}
\begin{tabular}{|c|c|c|c|}  \multicolumn{2}{c}{\huge Right Side}  & \multicolumn{2}{c}{\huge Left Side} \\\hline Edge & Matrix & Edge & Matrix \\\hline $(4321) \rightarrow (4132)$ & 
$\left(
\begin{array}{cccc}
 1& 0 &0  &0 \\
 0&  1&  0& 0 \\
 0& 0 & 1 & 0 \\
 1& 0 & 0 &  1\end{array} \right)$ &$(4321) \rightarrow (2431)$   & $ 
\left(
\begin{array}{cccc}
 1& 1 &0  &0 \\
 0&  0&  1& 0 \\
 0& 0 & 0 & 1 \\
 0& 1 & 0 &  0\end{array} \right)$ \\\hline $(4132) \rightarrow (4213)$ & $
\left(
\begin{array}{cccc}
 1& 0 &0  &0 \\
 0&  1&  0& 0 \\
 0& 0 & 1 & 0 \\
 0& 1 & 0 &  1\end{array} \right)$ & $(2431) \rightarrow (3241)$ & $ 
\left(
\begin{array}{cccc}
 1& 1&0  &0 \\
 0&  0&  1& 0 \\
 0& 0 & 0 & 1 \\
 0& 1 & 0 &  0\end{array} \right)$ \\\hline $(4213)\rightarrow(4321)$ & $ 
\left(
\begin{array}{cccc}
 1& 0 &0  &0 \\
 0&  1&  0& 0 \\
 0& 0 & 1 & 0 \\
 0& 0 & 1 &  1\end{array} \right)$ & $(3241) \rightarrow (4321)$ & $ 
\left(
\begin{array}{cccc}
 1& 1 &0  &0 \\
 0&  0&  1& 0 \\
 0& 0 & 0 & 1 \\
 0& 1 & 0 &  0\end{array} \right)$ \\\hline $(4132) \rightarrow (3142)$ & $ 
\left(
\begin{array}{cccc}
 1& 0 &0  &0 \\
 0&  1&  1& 0 \\
 0& 0 & 0 & 1 \\
 0& 0 & 1 &  0\end{array} \right)$ & $(2431) \rightarrow (2413)$ & $ 
\left(
\begin{array}{cccc}
 1& 0 &0  &0 \\
 0&  1&  0& 0 \\
 0& 0 & 1 & 0 \\
 1& 0 & 0 &  1\end{array} \right)$ \\\hline $(3142) \rightarrow (3142)$ & $ 
\left(
\begin{array}{cccc}
 1& 0 &0  &0 \\
 0&  1&  0& 0 \\
 0& 0 & 1 & 0 \\
 0& 1 & 0 &  1\end{array} \right)$ & $(2413) \rightarrow (2413)$ &  $ 
\left(
\begin{array}{cccc}
 1& 0 &0  &0 \\
 0&  1&  0& 0 \\
 0& 0 & 1 & 1 \\
 0& 0 & 0 &  1\end{array} \right)$ \\\hline $(3142)\rightarrow(4132)$ & $
\left(
\begin{array}{cccc}
 1& 0 &0  &0 \\
 0&  1&  1& 0 \\
 0& 0 & 0 & 1 \\
 0& 0 & 1 &  0\end{array} \right)$ & $(2413)\rightarrow(2431)$ & $
\left(
\begin{array}{cccc}
 1& 0 &0  &0 \\
 0&  1&  0& 0 \\
 0& 0 & 1 & 0 \\
 0& 0 & 1 &  1\end{array} \right)$ \\\hline $(4213) \rightarrow (4213)$ & $ 
\left(
\begin{array}{cccc}
 1& 0 &0  &0 \\
 0&  1&  0& 0 \\
 0& 0 & 1 & 1 \\
 0& 0 & 0 &  1\end{array} \right)$ & $(3241) \rightarrow (3241)$ & $
\left(
\begin{array}{cccc}
 1& 0 &0  &0 \\
 0&  1&  0& 0 \\
 0& 0 & 1 & 0 \\
 1& 0 & 0 &  1\end{array} \right)$ \\ \hline \end{tabular}
 \end{figure}
The Rauzy induction map $R$ is defined for all but a codimension $1$ set of IETs and associates to an interval exchange map $T= T_{\lambda, \pi}$, (now we restrict to $\lambda \in \Delta_m$, the unit simplex in $\R_+^m$, $\pi \in S_m$) a new interval exchange map $R(T) = T_{\lambda', \pi'}$, by considering the induced map of $T$ on the subinterval $[0, 1- \min(\lambda_{m}, \lambda_{\pi^{-1}m}))$, and renormalizing the lengths so $R(T)$ is again a map of $[0, 1)$.  $\lambda'$ is related to $\lambda$ via a projective linear transformation defined below. Rauzy induction is only defined if $\lambda_{m} \neq \lambda_{\pi^{-1}m}$. The \emph{Rauzy class} $\mathcal R$ of a permutation $\pi$ is the subset of $S_m$ that contains all the forward images of $\pi$ under Rauzy induction. The permutations in the Rauzy class form the vertices of the \emph{Rauzy graph}, a directed graph with two edges emanating from each permutation $\sigma \in \mathcal R$, corresponding to the permutations obtained by inducing on $[0, 1-\lambda_m)$ and $[0, 1-\lambda_{\sigma^{-1} m})$ respectively.

 If $\lambda_m = \min(\lambda_m, \lambda_{\pi^{-1}m})$ we say the first step in Rauzy induction is $A$. In this case the permutation of $R(T)$ is given by 
\begin{equation*} \pi'(j)= \begin{cases}
 \pi (j) & \quad j \leq \pi^{-1}(d)\\ \pi(d) & \quad j=\pi^{-1}(d)+1 \\ \pi(j-1) & \quad \text{otherwise}

\end{cases}.
\end{equation*}
We record the action of Rauzy induction by the elementary matrix $M(T,1)$ where 
\begin{equation*} M(T,1)[ij]= \begin{cases} \delta_{i,j} & \quad j \leq \pi^{-1}(d)\\
 \delta_{i, j-1} & \quad j>\pi^{-1}(d) \text{ and } i \neq d\\
\delta_{\pi^{-1}(d)+1,j} & \quad i=d \end{cases}. 
 \end{equation*}
 If $\lambda_{\pi^{-1}m} = \min(\lambda_m, \lambda_{\pi^{-1}m})$we say the first step in Rauzy induction is $B$.
 In this case the permutation of $R(T)$ is given by 
\begin{equation*} \pi'(j)= \begin{cases}
 \pi (j) & \quad \pi(j) \leq \pi(d)\\ \pi(j)+1 & \quad \pi(d) < \pi(j) < d \\ \pi(d)+1 & \quad \pi (j)=d

\end{cases}.
\end{equation*}
 We keep track of what has happened under Rauzy induction by a matrix \begin{equation*}M(T,1)[ij]= \begin{cases} 1 & \quad i=d \text{ and }j= \pi^{-1}(d) \\ \delta_{i,j} & \quad \text{ otherwise} \end{cases}. 
\end{equation*} 
We have $$\lambda = \frac{M(T, 1)\lambda'}{|M(T, 1)\lambda'|}.$$ Here, $| \cdot |$ denotes the $L_1$ norm on $\R^d$. $M(T, 1)$ depends on whether the step is $A$ or $B$ and the permutation $\pi$. We define the $n^{\text{th}}$ matrix of Rauzy induction by $$M(T,n)=M(T,n-1)M(R^{n-1}(T),1).$$ If $R^n(T) = \lambda^{(n)},$ we have $$\lambda =   \frac{M(T, n)\lambda^{(n)}}{|M(T, n)\lambda^{(n)}|}$$

%\begin{lem} \label{one step} If $T = T_{\lambda, \pi}$, $R(T)=T_{\lambda', \pi'}$  then the length vector $\lambda'$ is a scalar multiple of $M\lambda$, where $M$ is the matrix given by the first step of Rauzy induction from $T$. \end{lem}
%This is \cite[7.4 and 7.5]{gauss}.
\noindent Given a matrix $M$, we write $$M{\Delta}=M\mathbb{R}_d^+ \cap\Delta_d=\left\{\frac{Mv}{|Mv|}:v\in \Delta_d\right\}.$$ 

\begin{lem}\label{region} Let $T = T_{\lambda, \pi}$, $S = S_{\eta, \pi}$ be IETs. If $ \eta \in M(T,k){\Delta}$, then $$M(S, k) = M(T,k).$$ That is, the IETs $T$ and $S$ have the same first $k$ steps of Rauzy induction.
\end{lem}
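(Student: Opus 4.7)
The plan is to prove Lemma \ref{region} by induction on $k$, with the base case $k=1$ carrying all of the real content.

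For the base case, fix a permutation $\sigma$ in the Rauzy class. There are exactly two possible first-step matrices, $M_A(\sigma)$ and $M_B(\sigma)$, recorded in Figure \ref{table:matrix}. From the defining relation $\lambda = M(T,1)\lambda'/|M(T,1)\lambda'|$ with $\lambda' \in \Delta$, I would verify by direct computation with these explicit matrices that
\[
M_A(\sigma)\Delta = \bigl\{\mu \in \Delta : \mu_d < \mu_{\sigma^{-1}(d)}\bigr\}
\quad\text{and}\quad
M_B(\sigma)\Delta = \bigl\{\mu \in \Delta : \mu_{\sigma^{-1}(d)} < \mu_d\bigr\},
\]
i.e., that the image of the projectivized linear map is exactly the sub-simplex on which the corresponding Rauzy step is legal. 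The two images are disjoint (they meet only in the codimension-one set $\mu_d = \mu_{\sigma^{-1}(d)}$, on which Rauzy induction is undefined). Since $T$ and $S$ share the permutation $\pi$, the hypothesis $\eta \in M(T,1)\Delta$ then places $\eta$ in the same sub-simplex as $\lambda$, so $S$ performs the identical first Rauzy step and $M(S,1) = M(T,1)$.

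For the inductive step, assume the statement for $k$ and suppose $\eta \in M(T,k+1)\Delta$. Using the cocycle identity $M(T,k+1) = M(T,k)\,M(R^kT,1)$ together with $M(R^kT,1)\Delta \subseteq \Delta$, one has $\eta \in M(T,k)\Delta$, so the inductive hypothesis yields $M(S,k) = M(T,k)$; in particular $R^kS$ is defined and shares the permutation $\pi^{(k)}$ of $R^kT$. Writing $\eta = M(T,k)\tilde\eta/|M(T,k)\tilde\eta|$ with $\tilde\eta \in M(R^kT,1)\Delta$, and comparing with the analogous expansion $\eta = M(S,k)R^k\eta/|M(S,k)R^k\eta|$ (valid because $M(S,k)=M(T,k)$), one identifies $\tilde\eta = R^k\eta$, placing $R^k\eta$ inside $M(R^kT,1)\Delta$. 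Applying the base case to the pair $R^kT, R^kS$ (which share permutation $\pi^{(k)}$) gives $M(R^kS,1) = M(R^kT,1)$, and multiplying on the left by $M(S,k)=M(T,k)$ produces $M(S,k+1) = M(T,k+1)$.

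The only genuine obstacle is the base case: one must chase through the explicit matrices to confirm that $M_A(\sigma)\Delta$ and $M_B(\sigma)\Delta$ exactly partition $\Delta$ according to the A/B dichotomy of Rauzy induction. Everything else is pure bookkeeping with the multiplicative rule $M(T,k+1) = M(T,k)\,M(R^kT,1)$ and the elementary fact that renormalization intertwines the one-step Rauzy matrix with the Rauzy map itself.
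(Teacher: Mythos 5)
The paper states Lemma~\ref{region} without proof, treating it as a standard fact about Rauzy induction (and referring the reader to Veech's treatment). Your proof is correct and supplies an argument of exactly the right shape.

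Your base case is the real content, and the claimed description of the images is accurate: for a $B$-step the matrix adds the $\sigma^{-1}(d)$-th coordinate to the $d$-th and fixes the rest, so $M_B(\sigma)\Delta = \{\mu \in \Delta : \mu_d > \mu_{\sigma^{-1}(d)}\}$; for an $A$-step a direct read-off of the entries shows $(Mv)_{\sigma^{-1}(d)} = v_{\sigma^{-1}(d)} + v_{\sigma^{-1}(d)+1}$ while $(Mv)_d = v_{\sigma^{-1}(d)+1}$, giving $M_A(\sigma)\Delta = \{\mu \in \Delta : \mu_d < \mu_{\sigma^{-1}(d)}\}$, with surjectivity onto these open subsimplices coming from invertibility of the matrices. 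These two sets are disjoint (and in fact their complement in $\Delta$ is precisely the codimension-one set where Rauzy induction is undefined), so knowing which one $\eta$ lies in pins down $S$'s first step. The inductive step is pure bookkeeping with the cocycle relation $M(T,k+1) = M(T,k)\,M(R^kT,1)$; the one point worth making explicit is that the inductive hypothesis $M(S,k)=M(T,k)$ already forces $R^k S$ to be defined with the same permutation as $R^k T$, which is what licenses applying the base case to the pair $(R^kT, R^kS)$. Your invocation of the invertibility of $M(T,k)$ to identify the normalized preimage of $\eta$ with the length data of $R^kS$ closes the argument. Two cosmetic remarks: the table in Figure~\ref{table:matrix} only lists the Rauzy class of $(4321)$, so for full generality you should appeal to the general formulas for $M_A$ and $M_B$ given in Section~\ref{sec:rauzy} rather than the table itself (which you do implicitly); and the notation $R^k\eta$ is a mild abuse since $R$ acts on IETs rather than length vectors, but the intent is clear.
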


\noindent We will be working with the Rauzy class of the permutation $(4321)$ on $4$ letters. We record the graph (Figure~\ref{fig:rauzy}) and the associated matrices (Figure~\ref{table:matrix}).

\section{Abstract Setup}\label{sec:frostman}
\begin{prop}\label{prop:abstract}Let $S_1\supset S_2 \supset ...$ be a nested sequence of finite unions of disjoint affine 3-simplices (in $\mathbb{R}^4$ or $\Delta_3$) so that 
\begin{enumerate}
\item There exists a constant $c>0$ so that any simplex in $S_k$ has 1 side of length at least $c$.
%\item The number of simplices in $S_k$ is...
\item There exists a constant $\rho>0$ and a quadratic polynomial $p(k)$ with leading coefficient $a>0$ so that each simplex in $S_k$ contains $\rho10^{p(k)}$ simplices in $S_{k+1}$.
\item There exists $r \in \mathbb{N}$ and $h(x)$, a cubic polynomial with leading coefficient $-b$ where $b>0$, so that  $z<10^{h(k)}$ and $p \in \mathbb{R}^4$ (or $\Delta_3$) then $B(p,z)$ intersects at most one simplex in $S_k$, $J$ so that $B(p,z) \cap S_{k+r}=B(p,z)\cap J \cap S_{k+r}$.
\end{enumerate}
Then $H_{dim}(\cap S_i)\geq 1+\frac{a}{3b}.$ In particular if $a=12$ and $b=\frac 8 3 $ then $H_{dim}(\cap S_i)\geq \frac 5 2$.
\end{prop}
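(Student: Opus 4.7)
The plan is to apply Frostman's Mass Distribution Principle (as cited via \cite{mattila} earlier in the paper). Specifically, I will construct a probability measure $\mu$ on $K := \bigcap_i S_i$ such that for every $\epsilon > 0$ there is a constant $C_\epsilon > 0$ with
\[
\mu(B(p, z)) \leq C_\epsilon \, z^{\, 1 + a/(3b) - \epsilon}
\]
for all $p$ and all sufficiently small $z > 0$. This yields $\Hdim(K) \geq 1 + a/(3b) - \epsilon$ for every $\epsilon > 0$, hence $\Hdim(K) \geq 1 + a/(3b)$, as desired.

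Let $N_k$ denote the number of simplices in $S_k$. Iterating hypothesis (2) gives $N_{k+1} = N_k \cdot \rho \cdot 10^{p(k)}$, so $\log_{10} N_k = \sum_{j=0}^{k-1} p(j) + O(k) = \tfrac{a}{3} k^3 + O(k^2)$. Define an auxiliary probability measure $\mu_k$ by assigning total mass $1/N_k$ to each $J \in S_k$, distributed as $1$-dimensional Lebesgue measure on the long side $L_J$ of $J$ (whose length $\ell_J$ satisfies $\ell_J \geq c$ by (1)). In particular, the density of $\mu_k|_J$ on $L_J$ equals $1/(N_k \ell_J) \leq 1/(c N_k)$. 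Let $\mu$ be any weak-$*$ subsequential limit of $\{\mu_k\}$, which exists by Prokhorov's theorem applied to the compact ambient set $S_0$; using the containment in the next step, any such limit is supported on $K$.

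For the ball estimate, given small $z > 0$ let $k = k(z)$ be the largest integer with $z < 10^{h(k)}$, so that $|\log_{10} z| \leq b k^3 + O(k^2)$. By hypothesis (3) there is a single simplex $J \in S_k$ with $B(p, z) \cap S_k = B(p, z) \cap J$; the nestedness $S_\ell \subseteq S_k$ for $\ell \geq k$ then gives $B(p, z) \cap S_\ell \subseteq J$ for all $\ell \geq k$. The simplex $J$ contains exactly $N_\ell / N_k$ sub-simplices of $S_\ell$, and for each such $J' \subseteq J$ the projection of $B(p, z)$ onto the long side $L_{J'}$ has length at most $2z$, giving $\mu_\ell(J' \cap B(p, z)) \leq 2z / (c N_\ell)$. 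Summing over these at most $N_\ell / N_k$ sub-simplices yields
\[
\mu_\ell(B(p, z)) \leq \frac{N_\ell}{N_k} \cdot \frac{2z}{c N_\ell} = \frac{2 z}{c N_k},
\]
uniformly in $\ell \geq k$. Portmanteau's inequality for open sets then gives $\mu(B(p, z)) \leq \liminf_\ell \mu_\ell(B(p, z)) \leq 2z / (c N_k)$.

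Comparing asymptotics, the inequality $2z / (c N_k) \leq C_\epsilon z^{1 + a/(3b) - \epsilon}$ reduces to $(\tfrac{a}{3b} - \epsilon) |\log_{10} z| \leq \log_{10} N_k + O(1)$, i.e.\ $(\tfrac{a}{3} - \epsilon b) k^3 + O(k^2) \leq \tfrac{a}{3} k^3 + O(k^2)$, which holds for all sufficiently large $k$ (equivalently, all sufficiently small $z$). The main technical obstacle is the measure construction itself: because the long sides $L_J$ of simplices at successive levels need not be parallel, the supports of $\mu_k$ are not nested, and weak convergence together with concentration on $K$ must be extracted from the uniform ball bound and the containment $B(p, z) \cap S_\ell \subseteq J$ rather than from naive intersection of supports. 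Once this is handled, the rest of the argument (Prokhorov, Portmanteau, Frostman) is standard.
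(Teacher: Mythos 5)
Your proof is correct and follows the same Frostman strategy as the paper, with one pleasant simplification: you put $1$-dimensional Lebesgue measure on the long side of each simplex rather than ($3$-dimensional) Lebesgue measure on the whole simplex, which lets the ball estimate $\mu_\ell(B(p,z)\cap J') \le 2z/(cN_\ell)$ fall out directly from the density bound, bypassing the paper's hyperplane-slicing lemma and its dimension-dependent constant $g$. One small inaccuracy: hypothesis (3) localizes $B(p,z)\cap S_{k+r}$ (not $B(p,z)\cap S_k$) inside a single $J\in S_k$, so the uniform bound $\mu_\ell(B(p,z))\le 2z/(cN_k)$ holds for $\ell\ge k+r$ rather than $\ell\ge k$; this is harmless since $\ell\to\infty$. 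Also, the ``technical obstacle'' you flag at the end is a red herring: since $\mathrm{supp}\,\mu_\ell \subset S_\ell \subset S_k$ for $\ell\ge k$ and each $S_k$ is closed, any weak-$*$ limit is automatically supported on $\bigcap_k S_k$; non-parallelism of the long sides plays no role.
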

To prove Proposition \ref{prop:abstract} we use Frostman's lemma:
\begin{thm}(Frostman's Lemma) Let $A$ be a Borel set, $s>0$, and $\mu$ be a measure on $A$ such that $$\mu(B(x,r))\leq Cr^s.$$ Then $$\Hdim(A)\geq s.$$
\end{thm}
\begin{proof}[Proof of Proposition \ref{prop:abstract}] First we build a sequence of measures on the $S_k$ whose weak-* limits will have that  $\cap_{k=1}^{\infty} S_k$ is a set of full measure. Let $\mu_1$ be defined to be the probability measure so that:
\begin{enumerate}
\item It gives equal mass to each element of $S_1$.
\item It is a scalar multiple of Lebesgue when restricted to any element of $S_1$.
\end{enumerate}
Given $\mu_{k-1}$ which is a probability measure which restricted to each $S_{k-1}$ is a scalar multiple of Lebesgue, we inductively define $\mu_k$ to be the probability measure so that on each element of $S_k$ it is a scalar multiple of Lebesgue satisfying the following: if $J,J'\subset I \in S_{k-1}$ then $\mu_k(J)=\mu_k(J')$. That is, we evenly divide the mass in $I$ to its descendants in $S_k$. Let $\mu_{\infty}$ be a weak-* limit of these measures (it is unique but this is not important for our purposes). 
\begin{lem}If $J \in S_k$ then $\mu_k(J)=\mu_L(J)$ for all $L\geq k$.
\begin{proof}We prove this by induction. It is clear when $L=k$. We now assume that it is true for $L=r \geq k$. Observe first that $\mu_{r+1}(J)\geq \mu_r(J)$ because $$ \mu_{r+1}(J)\geq \sum_{J \in S_{r+1}\cap J}\mu_{r+1}(J).$$ By construction of $\mu_{r+1}$ $$\sum_{J \in S_{r+1}\cap J}\mu_{r+1}(J)= \mu_r(J).$$ By disjointness of $J$ from the other elements of $S_k$ we have $$\mu_L(J)\leq \mu_k(J)$$ for all $L\geq k$.
\end{proof}
\begin{lem} For all $k\leq L$ we have that if $J,J'\in S_k$ then $\mu_L(J)=\mu_L(J')$.
\end{lem}
\end{lem}
\begin{proof} This follows by induction and the fact that $|S_{k+1}\cap J|=\rho10^{p(k)}$  for all $J \in S_k$.
\end{proof}
\begin{lem}$$\mu_L(B(x,r))\leq \frac{|\{J\in S_L: B(x,r)\cap J \neq \emptyset\}|}{|S_L|}g\frac{r}c$$ where $g$ depends only on dimension.
\end{lem}
\begin{proof}
If $J \in S_k$ consider slices of $J$ by parallel hyperplanes perpendicular to the long side of $J$. There exists a constant $e$ so that for a segment of the long side of length $\frac c e$ we have that the hyperplanes intersect $J$ in area at least $\frac 1 2 $ of the maximal area of such a hyperplane. Let $g= 2 e$.
\end{proof}
\begin{cor}Let $B(x,r) \cap S_{k+L} \subset J \in S_k$. Then 
$$\mu_{\infty}(B(x,r))\leq g\frac r c|S_k|^{-1}=g\frac r c(\prod_{i=1}^k(\rho 10^{p(k)}))^{-1}.$$
\end{cor}
A simple calculation shows that if $P$ a cubic polynomial with leading term $-\frac a 3$, then for every $\epsilon>0$ there exists a $C$ so that 
\begin{equation}\label{poly}10^{P(x)}<C(10^{h(k+1)})^{\frac a {3b}-\epsilon}.\end{equation}

\noindent We now complete the proof. For each $r$ let $k_r=\min\{L:r>10^{h(L)}\}$. By the previous corollary 
$\mu_{\infty}(B(x,r))\leq \frac{r}c|S_{k_r}|$. By Condition (2) this is at most $$\frac r c \rho^{k_r-1}10^{-\sum_{i=1}^{k_r-1}p(i)}=\frac r c \rho^k10^{-P(k_r)}$$ where $P(x)$ is a cubic polynomial with leading coefficient $\frac a 3$. It follows by our observation (\ref{poly}) above that for every $\epsilon$ there exists $C$ so that  $$\mu(B(x,r))<\frac r c C(10^{h(k_r-1)})^{\frac a {3b} -\epsilon}\leq \frac 1 cC r^{1+\frac a {3b} -\epsilon}.$$ 
\end{proof}

\begin{lem}\label{to verify} To verify Condition 3 of Propostion \ref{prop:abstract} it suffices to show that there exists $g$ a cubic polynomial with leading coefficient $-b$ so that the elements of $S_{k+2}$ avoid an $10^{g(k)}$ neighborhood of the boundary of $S_{k}$.
\end{lem}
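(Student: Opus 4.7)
The plan is to verify Condition~3 of Proposition~\ref{prop:abstract} by taking $r = 2$ and letting $h$ be a constant shift of $g$, which preserves the leading coefficient $-b$.

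Fix a ball $B(p,z)$ with $z < 10^{h(k)}$, and suppose $q_1, q_2 \in B(p,z) \cap S_{k+2}$ lie in distinct simplices $J_1, J_2 \in S_k$. Each $q_i$ belongs to some $J_i' \in S_{k+2}$ with $J_i' \subset J_i$. Since the simplices of $S_k$ are disjoint, $\partial J_i \subset \partial S_k$, so the hypothesis of the lemma gives
\[
d(q_i, \partial J_i) \geq d(q_i, \partial S_k) \geq 10^{g(k)}.
\]
Consequently $B(q_i, 10^{g(k)}) \subset J_i$. Because $J_1$ and $J_2$ are disjoint, these two balls are disjoint as well, forcing $d(q_1, q_2) \geq 2 \cdot 10^{g(k)}$. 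On the other hand $q_1, q_2 \in B(p,z)$ yields $d(q_1, q_2) < 2z$, which is incompatible with $z < 10^{g(k)}$. Hence $B(p,z) \cap S_{k+2}$ is contained in a single simplex $J \in S_k$, and trivially $B(p,z) \cap S_{k+2} = B(p,z) \cap J \cap S_{k+2}$.

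To upgrade this to the stronger statement that $B(p,z)$ itself intersects at most one simplex of $S_k$, I would take $h(k) = g(k) - \log_{10} 2$. Then for any $q \in B(p,z) \cap S_{k+2}$, the triangle inequality gives $B(p,z) \subset B(q, 2z) \subset B(q, 10^{g(k)}) \subset J$, so the entire ball lies in $J$. In the remaining case $B(p,z) \cap S_{k+2} = \emptyset$, the required identity holds vacuously (both sides are empty). Since $h$ differs from $g$ by a constant, it is still a cubic polynomial with leading coefficient $-b$, and Condition~3 is verified with this choice of $h$ and $r = 2$. The argument is purely geometric, so I do not anticipate any substantive obstacle; the one point to be careful about is confirming that the harmless shift $g(k) - \log_{10} 2$ does not affect the leading coefficient required by Proposition~\ref{prop:abstract}, which is immediate.
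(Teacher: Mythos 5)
Your proof is correct and follows essentially the same approach as the paper's: set $r=2$ and take $h$ equal to $g$ (or a constant shift of it, which does not affect the leading coefficient $-b$). In fact your first paragraph is slightly tighter than the paper's one-line argument, which sets $h=g$ and is cavalier about a factor of $2$ when it asserts that a ball of radius $z<10^{g(k)}$ meeting two elements of $S_k$ lies in a $10^{g(k)}$-neighborhood of $\partial S_k$; your ``two disjoint balls of radius $10^{g(k)}$ around $q_1,q_2$'' contradiction delivers the actually-used containment $B(p,z)\cap S_{k+2}\subset J$ cleanly with $h=g$, so the subsequent downshift to $h=g-\log_{10}2$, while harmless, is not needed for the downstream application in Proposition \ref{prop:abstract}.
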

\begin{proof}Let $h(k)=g(k)$. If $z<10^{h(k)}$ and  $B(p,z) $ intersects two elements of $S_{k}$ and so is in a $g(k)$ neighborhood of the boundary of an element of $S_k$. So $B(p,z) \cap S_{k+2}=\emptyset.$ Condition 3 follows with $r=2$.% then since $x<10^{g(k)}$ it can only intersect the element of $S_k$ containing it.
\end{proof}

\section{The paths we take}\label{sec:paths}

\noindent Rauzy induction provides a criterion (due to Veech) for non-unique ergodicity that is crucial for our construction. 
\begin{lem}\label{not ue}\cite[\S 1 and Proposition 3.22]{Veech78} Let $T$ be an IET so that $R^k(T)$ is defined for all $T$. If $T$ has exactly $r$ ergodic probability measures then $$\Delta_{\infty}(T) := \bigcap_{k=1}^{\infty} M(T,k)\Delta$$ is a subsimplex of dimension $r-1$. Every point in it gives an IET with $r$ ergodic probability measures, which are in bijective correspondence with the set of invariant measures of $T$.  \end{lem}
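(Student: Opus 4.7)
The plan is to identify $\Delta_\infty(T)$ affinely with the space $\mathcal{M}(T)$ of $T$-invariant Borel probability measures on $[0,1)$, and then invoke the standard fact that $\mathcal{M}(T)$ is a Choquet simplex whose extreme points are the ergodic invariant probability measures. If there are exactly $r$ of these, then $\mathcal{M}(T)$, and hence $\Delta_\infty(T)$, is an $(r-1)$-dimensional simplex.

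For the forward map $\mu \mapsto \lambda^\mu \in \Delta_\infty(T)$, set $\lambda^\mu_j = \mu(I_j)$, a point of $\Delta$ after normalization. Let $\mathcal{P}_k$ denote the partition of the $k$-th Rauzy-induced subinterval into the intervals $I_1^{(k)},\dots,I_d^{(k)}$ of $R^k(T)$. The key identity is that each original interval $I_j$ is a disjoint union of $T$-iterates of pieces of the $I_i^{(k)}$, and the $(j,i)$ entry of $M(T,k)$ counts how many such pieces lie in $I_j$; by $T$-invariance of $\mu$ this gives
\begin{equation*}
\lambda^\mu = M(T,k)\,\eta^{(k),\mu}, \qquad \eta^{(k),\mu}_i := \mu(I_i^{(k)}),
\end{equation*}
so that $\lambda^\mu/|\lambda^\mu| \in M(T,k)\Delta$ for every $k$, and hence $\lambda^\mu \in \Delta_\infty(T)$.

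For the inverse direction, given $\lambda \in \Delta_\infty(T)$ I would choose $\eta^{(k)} \in \Delta$ with $\lambda \propto M(T,k)\eta^{(k)}$; the vector $\eta^{(k)}$ prescribes finitely additive masses on the pieces of $\mathcal{P}_k$ and on their $T$-translates which tile $[0,1)$, and the cocycle relation $M(T,k+1)=M(T,k)M(R^k T,1)$ forces these prescriptions to be consistent across $k$. Because $R^k(T)$ is defined for every $k$, Keane's i.d.o.c.\ condition holds, so the partitions $\mathcal{P}_k$ together with their $T$-iterates have mesh tending to zero and generate the Borel $\sigma$-algebra; a Carath\'eodory-type extension then produces a Borel probability measure $\mu_\lambda$. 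Invariance of $\mu_\lambda$ under $T$ follows because Rauzy induction is built from first-return maps, so $T$ permutes the tower pieces over each $I_i^{(k)}$ in a measure-preserving fashion. The assignment is manifestly affine in $\lambda$ and inverts $\mu \mapsto \lambda^\mu$.

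Once the affine bijection is in place, $\mathcal{M}(T)$ is weak-$*$ compact and convex, its extreme points are by the ergodic decomposition exactly the $r$ ergodic probability measures, and mutual singularity of distinct ergodic measures forces $\mathcal{M}(T)$ to be a simplex with $r$ vertices, hence an $(r-1)$-simplex; transporting via the affine bijection, $\Delta_\infty(T)$ is an $(r-1)$-dimensional subsimplex of $\Delta$. Finally, for any $\lambda' \in \Delta_\infty(T)$, Lemma \ref{region} yields $M(T_{\lambda',\pi},k) = M(T,k)$ for all $k$, whence $\Delta_\infty(T_{\lambda',\pi}) = \Delta_\infty(T)$; applying the bijection now to $T_{\lambda',\pi}$ gives the claimed natural correspondence between its invariant probability measures and those of $T$. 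The main obstacle is the construction of $\mu_\lambda$: verifying that the finitely additive data $(\eta^{(k)})_k$ extend to a countably additive Borel measure needs the mesh of the Rauzy partitions to shrink to zero on all of $[0,1)$, which is exactly what the hypothesis that $R^k(T)$ is defined for every $k$ supplies through Keane's condition and the resulting minimality of $T$.
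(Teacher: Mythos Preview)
The paper does not supply its own proof; the lemma is quoted from Veech~\cite{Veech78} and used as a black box. Your outline is precisely the standard argument given there: identify $\Delta_\infty(T)$ affinely with the space $\mathcal M(T)$ of $T$-invariant probability measures via $\mu\mapsto(\mu(I_j))_j$, construct the inverse from the (unique, since each $M(T,k)\in GL_d(\mathbb Z)$) preimages $\eta^{(k)}=M(T,k)^{-1}\lambda$ by a Carath\'eodory extension over the Rauzy towers, and finish with the Choquet-simplex structure of $\mathcal M(T)$ together with mutual singularity of distinct ergodic measures. Your appeal to Lemma~\ref{region} for the final clause is also the intended one, and is legitimate because $\Delta_\infty(T)$ sits inside each \emph{open} $M(T,k)\Delta$, so Rauzy induction is defined at every stage for $T_{\lambda',\pi}$.

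The one place where real content hides is the assertion that the Rauzy tower partitions have mesh tending to zero, equivalently $\max_i |I_i^{(k)}|\to 0$. You attribute this to Keane's condition, which is correct, but it is not a formality: this is exactly what Veech proves in the cited sections of~\cite{Veech78}, and without it neither the Carath\'eodory extension nor the injectivity of $\mu\mapsto\lambda^\mu$ goes through. With that caveat noted, your sketch is complete and matches Veech's route.
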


\noindent We will use this criterion to build a large set of IETs $T$ with at least $2$ invariant measures. For this, we need to consider some very specific paths. First, define the matrix $L_1(k)$ by going from (4321) to (4132) to (4213) and back to (4321) $n$ times. We have
$$ L_1(n) = \left(
\begin{array}{cccc}
 1& 0 &0  &0 \\
 0&  1&  0& 0 \\
 0& 0 & 1 & 0 \\
 n& n& n &  1\end{array} \right).$$
Similarly, define the matrix $U_1(n)$ by going from (4321) to (4132) to (4213) then looping at (4213) for $k$ times and then going back to (4321). We have
$$U_1(n) = \left(
\begin{array}{cccc}
 1& 0 &0  &0 \\
 0&  1&  0& 0 \\
 0& 0 & n+1 & n \\
 1& 1 & 1 &  1\end{array} \right).$$

 \noindent Given $A \in SL_2(\mathbb{Z}_+)$, we can write 
 \begin{equation}\label{sl2 decomposition} A=H_1^{p_1}H_2^{p_2}H_1^{p_3}\cdot \ldots\cdot H_1^{p_k}
 \end{equation}  for nonnegative integers $p_1,\ldots,p_r$ and where $$H_1=\left(\begin{array}{cc} 1 & 0 \\ 1 &1
\end{array}\right) \mbox{ and }H_2=\left(\begin{array}{cc} 1 & 1 \\ 0 &1
\end{array}\right).$$ Notice the interactions of the 3rd and 4th columns under $L_1(n)$ and $U_1(n)$ are $H_1^n$ and $H_2^nH_1$ respectively. This motivates the defintion $$N_1(A,r)=L_1^{p_1}U_1^{p_2}L_1^{p_3-1}U_1^{p_4}\ldots L_1^{p_k-1}U_1^r.$$

%\marginpar{Do we really want $p_i-1, i \geq 3$ here?}

\noindent Similarly we define $L_2(n),U_2(n)$ and $N_2(A,r)$ on the left hand side of the Rauzy graph.
 We will be especially concerned with $A$ and $r$ satisfying $$|A|\in I_k :=\left[10^{k^2-k},2\cdot 10^{k^2-k}\right] \mbox{ and } r \in J_k : = \left[10^{(k+1)^2+k},2\cdot 10^{(k+1)^2+k}\right].$$

\subsection{Properties}\label{sec:properties}
We consider matrices $M_k$ of the form
\begin{eqnarray*}\label{our matrices} M_k &=& M_k\left(\{A_i\}_{i=1}^{2k+2}, \{r_i\}_{i=1}^{2k+3}\right)\\  &=&N_1(A_{3},r_{4})N_2(A_{4},r_{5})\ldots N_1(A_{2i+1},r_{2i+2})N_2(A_{2i+2},r_{2i+3}) \ldots N_1(A_{2k+1},r_{2k+2})N_2(A_{2k+2},r_{2k+3})
\end{eqnarray*} where $|A_i|\in I_i$ and $r_i \in J_i$. Given a  matrix $M$, let $C_j(M)$ denote the $j^{th}$ column, $|C_j(M)|$ denote the sum of the entries in the $i^{th}$ column, and $C_{max}(M)$ denote the column with the largest sum of entries. 

\noindent\textbf{Notation} %It is convenient for our purposes to consider the action of matrices of Rauzy induction on the simplex.
% If $M$ is a matrix of Rauzy induction then let 
%$$M\Delta=M\mathbb{R}^4_+\cap \Delta=\{\frac{M\bar{v}}{|M\bar{v}|}:\bar{v}\in \Delta\}.$$
Recall that given a metric $d$ on a space $X$, we can define a pseudo-metric on subsets of $X$ via  $d(A,B)=\inf \{d(a,b);a\in A,b\in B\}$. We will use this where $d$ is the metric on the simplex $\Delta$ induced by angles between vectors. If $v, w$ are vectors let ${\spanD(v,w)=\{av+bw:a,b\geq 0\} \cap \Delta.}$ In general this can be empty but if $v,w \in \mathbb{R}^4_+$ it will not be. In the section below will view columns of matrices as elements of the simplex.
\subsection{What we will show}\label{sec:we show}
In this section we will prove that if $M_k$ has the form given above where the $A_i$ are all $D$-balanced $(D>9)$ for all $i\leq 2k+2$ then there exists $f$, a cubic polynomial with leading coefficient $-\frac 8 3$, quadratic polynomials $p,q$ and a cubic polynomial $H$ with leading coefficient $4$ so that 
\begin{enumerate}
\item $d\left(C_i(M_k),C_j(M_K)\right)> \frac 1 {900}$ for $i \in \{1,2\}$ and $j \in \{3,4\}$ (Proposition \ref{line seg}). 
\item $d \left(C_1(M_k),C_2(M_K) \right), d \left(C_3(M_k),C_4(M_k)\right) \in [10^{p(k)}10^{f(k)}, 10^{q(k)} 10^{f(k)}]$% where $p$ is a cubic polynomial with leading coefficient $\frac 1 3 $ and $q$ is a quadratic polynomial.
 (Proposition \ref{angle bound}).
%\item $d \left(C_3(M_k),C_4(M_k)\right) \in [\frac 1 {D'}^k10^{p(2k+2)}, 10^{q(k)}D'^k 10^{p(2k+2)}]$.
\item There are at least $\rho^kH(k)$ such $M_k$% where $H(k)$ is a cubic polynomial with leading coefficient $4$.
 (\S\ref{subsec:numb}).
\end{enumerate}
Note the connection between Conclusion 1 and Conclusion 1 of the Proposition \ref{prop:abstract}. Similarly for Conclusion 3 and Conclusion 2 of the Proposition \ref{prop:abstract}.
\subsection{Angle bounds}
\begin{lem}\label{column size} If $M_k$ is a matrix described above, then 
$$|C_j(M_q)|\in \left[ \prod_{i=4}^{2k+3} 10^{i^2},2^{2k}\cdot 2  \prod_{i=4}^{2k+3} 10^{i^2}\right]\text{ for }j=1,2$$ and
$$|C_j(M_q)|\in \left[\prod_{i=3}^{2k+2} 10^{i^2}, 2^{2k} \cdot 2 \prod_{i=3}^{2k+2} 10^{i^2}\right] \text{ for }j=3,4.$$
\end{lem}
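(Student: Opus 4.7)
The plan is to proceed by induction on $k$. The key organizing observation is that for any nonnegative matrices $M, N$, the column sum vector $c(M) := (|C_1(M)|, |C_2(M)|, |C_3(M)|, |C_4(M)|)^\top$ transforms linearly as $c(MN) = N^\top c(M)$. This reduces the analysis to tracking a single vector in $\mathbb{R}^4$ under successive applications of the transposes of the building blocks $L_j(n), U_j(n)$ that make up each $N_1, N_2$ factor, rather than having to multiply out $4\times 4$ matrices.

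First I would carry out the single-block computation: for one factor $N_1(A_i, r_{i+1})$, use the decomposition $A_i = H_1^{p_1} H_2^{p_2} \cdots H_1^{p_\ell}$ together with the defining formula $N_1(A_i, r_{i+1}) = L_1(p_1) U_1(p_2) L_1(p_3-1) \cdots U_1(r_{i+1})$ to express $N_1(A_i, r_{i+1})^\top$ as an explicit product of elementary transposes. A direct computation gives
\begin{align*}
L_1(n)^\top c &= (c_1 + n c_4,\; c_2 + n c_4,\; c_3 + n c_4,\; c_4)^\top,\\
U_1(n)^\top c &= (c_1 + c_4,\; c_2 + c_4,\; (n+1) c_3 + c_4,\; n c_3 + c_4)^\top,
\end{align*}
together with the analogous (though not identical) formulas for $L_2, U_2$ coming from the left--right symmetry of the Rauzy graph. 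Iterating these actions along the decomposition of $A_i$ yields the column sums of a single block. The $D$-balanced hypothesis on $A_i$ (with $D>9$) is essential here: it forces every exponent $p_j$ to be comparable to a root of $|A_i|$, which keeps the ratios within the ``large'' pair and within the ``small'' pair of components of the evolving column-sum vector bounded throughout the iteration, and hence lets me close the estimate with multiplicative constants in $[1,2]$ rather than letting them blow up.

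With the one-block estimate in hand, the induction is routine. For the base case $k=1$, I would compute $c(M_1) = N_2(A_4,r_5)^\top N_1(A_3,r_4)^\top(1,1,1,1)^\top$ by applying the one-block estimate twice and substitute $|A_3|\in I_3$, $r_4\in J_4$, $|A_4|\in I_4$, $r_5\in J_5$ to verify the claimed bounds. For the inductive step, write $M_k = M_{k-1} \cdot N_1(A_{2k+1}, r_{2k+2}) N_2(A_{2k+2}, r_{2k+3})$ and apply the one-block estimate to $c(M_{k-1})$, which by inductive hypothesis lies in the claimed intervals. Each of the two new factors multiplies the currently-dominant pair of column sums by a quantity whose leading order is exactly the next factor $10^{(2k+2)^2}$ or $10^{(2k+3)^2}$ in the target product, with a multiplicative error of at most $4$ per inductive step; across $k$ steps this accumulates to the claimed $2^{2k}\cdot 2$ overall upper-bound constant.

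The principal obstacle is the detailed index-bookkeeping, namely explaining why columns 1,2 collect $10^{i^2}$-factors for $i\in\{4,\ldots,2k+3\}$ while columns 3,4 collect them for $i\in\{3,\ldots,2k+2\}$. This asymmetry reflects the fact that the very first factor $N_1(A_3,r_4)$ already multiplies one pair of columns by a large scalar while the other pair only begins its multiplicative growth when $N_2(A_4,r_5)$ is applied; the roles then alternate through the subsequent $N_1, N_2$ factors. Simultaneously, obtaining the tight constant $2^{2k}\cdot 2$ (as opposed to, e.g., $2^{4k}$) requires observing that of the four new scalars introduced at each inductive step, only two produce truly multiplicative contributions to each column pair while the other two contribute only sub-dominantly; verifying this while handling the inner iteration through the long $D$-balanced decomposition of each $A_i$ is where most of the real work will sit.
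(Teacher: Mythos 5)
Your overall strategy — induction on $k$, tracking the column-sum vector block by block, and using $c(MN) = N^{\top}c(M)$ for nonnegative matrices — is the same as the paper's; the $N^\top$ formalism just makes explicit what the paper's proof does informally. The place the proposal goes wrong is in the single-block estimate, specifically in the justification you give for invoking $D$-balancedness. You assert that $D$-balancedness ``forces every exponent $p_j$ to be comparable to a root of $|A_i|$.'' That is false: balancedness is a statement about the two \emph{columns} of $A$ being of comparable size, and says nothing about the continued-fraction exponents. The matrix $H_1^N H_2 = \mat{1}{1}{N}{N+1}$ is $2$-balanced for every $N$, yet $p_1=N$ carries essentially all of $|A|$. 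The follow-up claim — that balancedness is what keeps $|C_3|/|C_4|$ and $|C_1|/|C_2|$ bounded through the iteration — is likewise off target: after the trailing $U_1^r$ factor the two column sums in the dominant pair differ by at most a factor $1 + 1/r \le 2$ (they are $r$ and $r+1$ multiples of the same vector plus an identical tail), and this holds without any balance hypothesis.

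Iterating explicitly through the elementary factors is what pushes you toward the false exponent claim, and it is also unnecessary. Use instead the identity the paper records just before defining $N_1$: the $\{3,4\}$-block of $N_1(A,r)$ is exactly $AH_2^rH_1$, whose column sums are $(r+1)|a_1|+|a_2|$ and $r|a_1|+|a_2|$ where $a_1,a_2$ are the columns of $A$. From this and nonnegativity, $|C_j(MN_1(A,r))|$ for $j\in\{3,4\}$ is sandwiched between $\min\{|C_3(M)|,|C_4(M)|\}$ and $\max\{|C_3(M)|,|C_4(M)|\}$ times those quantities. The place $D$-balancedness actually enters the lower bound is just to guarantee that $|a_1|$ is at least a $D$-dependent fraction of $|A|$, so that $(r+1)|a_1|+|a_2|$ is at least a constant times $|A|r$ rather than merely at least $r$ (which is all one gets when one column of $A$ is tiny). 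With that correction to the single-block step, the remainder of your outline — the induction, the parity bookkeeping of which $10^{i^2}$-factors attach to which column pair, and the absorption of a bounded multiplicative error per step — is consistent with the paper's proof.
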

\begin{proof}We have $$|C_j(MN_1(A,r))|\leq |C_j(M)|\cdot |A|\max\{|C_3(M)|,|C_4(M)|\}$$ for $j= 1,2.$ Similarly, $$|C_i(MN_1(A,r))|\geq |A|r \min\{|C_3(M)|,|C_4(M)|\}$$ for $j=3,4$. Similar inequalities hold for $N_2$. The lemma follows by induction with the extra factor of 2 absorbing $N_1$'s contribution to $C_1,C_2$ or $N_2$'s contribution to $C_3,C_4$.
\end{proof}

The remainder of this subsection is devoted to proving:

\begin{prop}\label{angle bound} Let $M_k$ be as given in Equation \ref{our matrices} where all of the $A_i$ are $D$-balanced. There exists a cubic polynomial $f$ with leading coefficient $-\frac 8 3$ and two quadratic polynomials $p,q$ so that  $d \left(C_1(M_k),C_2(M_K) \right)$ and  $d \left(C_3(M_k),C_4(M_k)\right)$ are  in $[10^{p(k)}10^{f(k)}, 10^{q(k)} 10^{f(k)}]$.
\end{prop}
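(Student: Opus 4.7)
I plan to induct on $k$, simultaneously tracking the column-norm bounds from Lemma~\ref{column size} and the two simplex distances $d(C_1(M_k), C_2(M_k))$ and $d(C_3(M_k), C_4(M_k))$. The key structural input is that columns $3, 4$ of $N_1(A,r)$ vanish in rows $1, 2$ (a short check on the generators $L_1, U_1$), so the restriction of $N_1(A, r)$ to the $(3,4)$-block is exactly $A \cdot H_2^r \cdot H_1 \in SL_2(\Z_+)$, and symmetrically columns $1, 2$ of $N_2(A, r)$ vanish in rows $3, 4$ with the $(1,2)$-block equal to a $\det{=}1$ matrix of the same form.

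The first step is to derive an exact contraction identity for one multiplication by $N_1$: because $c_3(N_1)$ and $c_4(N_1)$ are supported on rows $3, 4$, the new columns $C_3(MN_1)$ and $C_4(MN_1)$ lie in $\operatorname{span}(C_3(M), C_4(M))$. Parametrizing each by its convex position on the segment from $C_4(M)/|C_4(M)|$ to $C_3(M)/|C_3(M)|$ and using $\det(A \cdot H_2^r \cdot H_1) = 1$, one arrives at
\[
d\bigl(C_3(MN_1), C_4(MN_1)\bigr) \;=\; \frac{|C_3(M)|\,|C_4(M)|}{|C_3(MN_1)|\,|C_4(MN_1)|}\;d\bigl(C_3(M), C_4(M)\bigr).
\]
Inserting the column-size estimates of Lemma~\ref{column size} and using that $A_{2j+1}$ is $D$-balanced, one reads off the log-contraction contributed by the $j$-th $N_1$ factor as a polynomial in $j$, so summing over $j=1,\ldots,k$ and invoking $\sum_{i=3}^{2k+2} i^2 = \tfrac{8}{3}k^3 + O(k^2)$ produces a cubic with leading term $-\tfrac{8}{3}k^3$. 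A symmetric analysis handles the effect of each $N_2$ factor on $d(C_1, C_2)$.

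The main technical obstacle will be controlling how each $N_2(A_{2j+2}, r_{2j+3})$ perturbs $d(C_3, C_4)$ (and symmetrically how $N_1$ perturbs $d(C_1, C_2)$). The columns $3, 4$ of $N_2$ carry nonzero entries in rows $1, 2$, and since by stage $j$ the norms $|C_1(M)|, |C_2(M)|$ dominate $|C_3(M)|, |C_4(M)|$ by a factor cubic in $j$, a naive bound would allow the $(1,2)$-mixing to swamp the leading behaviour of $d(C_3, C_4)$. What rescues the argument is the explicit structure of $L_2, U_2$: the perturbations added to $C_3$ and to $C_4$ agree to leading order, so only their difference — bounded by $d(C_1, C_2)$ times a controlled factor already estimated at the previous inductive step — contributes to $d(C_3, C_4)$. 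All polynomial-in-$k$ slack (from the balance constant $D$, the widths of the intervals $I_i, J_i$, and the $2^{2k+1}$ factor in Lemma~\ref{column size}) gets absorbed into the quadratic polynomials $p(k)$ and $q(k)$, while the lower end of the interval requires ruling out cancellations in the contraction identity, which is precisely where the quantitative hypothesis $D > 9$ enters — guaranteeing that the ratio $|C_3|/|C_4|$ (and the analogous $a_1/a_3$ balance on each $A_i$) remains uniformly bounded throughout the induction.
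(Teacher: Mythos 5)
Your block-structure observations are correct and essentially parallel the paper's Lemma~\ref{add vectors}, Lemma~\ref{sl2 angle}, and the remark preceding the definition of $N_1(A,r)$: the $(3,4)$-block of $N_1(A,r)$ is $AH_2^rH_1$, columns $3,4$ of $N_1$ vanish in rows $1,2$, and — by the reversal symmetry relating $N_1$ and $N_2$ — columns $3,4$ of $N_2(B,s)$ share the \emph{same} rows-$1,2$ entries, so the perturbation they inject into $C_3$ and $C_4$ is literally the same vector $u$. Up to this point the proposal is sound and captures what the paper packages into Lemmas~\ref{covering decay} and~\ref{fixed r}.

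The gap is in how you handle the $N_2$ cross-interaction. From $C_3' = C_3 + u$ and $C_4' = C_4 + u$ you conclude that ``only their difference \dots contributes to $d(C_3,C_4)$''; this is not a valid deduction. The angle (equivalently $\sin\theta$) is controlled by the wedge $C_3'\wedge C_4' = C_3\wedge C_4 + u\wedge(C_4-C_3)$, and the new term $u\wedge(C_4-C_3)$ does \emph{not} vanish even when the perturbation to both columns is identical; in the present construction it in fact dominates the inherited term $C_3\wedge C_4$. So the determinant-preservation that underlies your ``exact contraction identity'' for the $(3,4)$-block fails to chain across the intervening $N_2$ factors, and the cumulative decay cannot be read off as $\tfrac{1}{|C_3(M_k)||C_4(M_k)|}$. (The paper's own route avoids this trap by bounding the per-step change directly in Lemmas~\ref{covering decay} and~\ref{fixed r} rather than iterating an exact formula.) This gap also shows up numerically: if the exact identity did propagate, plugging in Lemma~\ref{column size} would give an exponent $-2\sum_{i=3}^{2k+2}i^2\approx -\tfrac{16}{3}k^3$, not the $-\tfrac 83 k^3$ you assert — you drop a factor of $2$ in passing from ``$\sum i^2\approx \tfrac 83 k^3$'' to the leading coefficient of $f$, and that factor is exactly the difference between the claimed Hausdorff dimension $\tfrac 52$ and something strictly smaller. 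As a smaller point, your description of the column-size hierarchy is also off: $|C_1|,|C_2|$ dominate $|C_3|,|C_4|$ for $M_j$ (which ends in an $N_2$ factor), but at the moment $N_2$ is applied — i.e.\ to $M_{j-1}N_1$ — the dominance has flipped, so the ``swamping'' you worry about does not occur in the way you describe, and the real issue is the wedge perturbation above.
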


We present an elementary lemma on how the angle between vectors changes under addition.

\begin{lem}\label{add vectors} Let $v, w \in \R^n,$ and let $\theta_0$ denote the angle between $v$ and $w$. If $\theta_1$ denotes the angle between $v+w$ and $w$, we have $$| \sin \theta_1 |= \frac{\|v\|}{\|v+w\|} |\sin \theta_0|.$$ In particular, if $v$ and $w$ are perpendicular, we have  $$ |\sin \theta_1 |= \frac{\|v\|}{\|v+w\|}.$$ \end{lem}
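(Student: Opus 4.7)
The plan is to reduce the statement to a direct calculation in the 2-plane spanned by $v$ and $w$ (assuming linear independence; the degenerate case is trivial since both sines vanish). Once inside that plane, only the component of $v$ perpendicular to $w$ matters for measuring angles against $w$, and since $w$ contributes nothing perpendicular to itself, the perpendicular component of $v+w$ equals the perpendicular component of $v$. This immediately gives the identity.

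More concretely, I would first let $\hat w = w/\|w\|$ and write the orthogonal decomposition $v = v_{\parallel} + v_{\perp}$ with $v_{\parallel} = (v\cdot \hat w)\hat w$ and $v_{\perp} = v - v_{\parallel}$. By the definition of the angle between two vectors, $\|v_{\perp}\| = \|v\|\,|\sin\theta_0|$. Then $v+w = (v_{\parallel}+w) + v_{\perp}$ is the orthogonal decomposition of $v+w$ with respect to $w$, because $v_{\parallel}+w$ is a scalar multiple of $w$ while $v_{\perp}\perp w$. Applying the same angle identity to the pair $(v+w, w)$, I obtain
\begin{equation*}
\|v+w\|\,|\sin\theta_1| = \|v_{\perp}\| = \|v\|\,|\sin\theta_0|,
\end{equation*}
which after dividing by $\|v+w\|$ gives the claimed formula.

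The ``in particular'' clause is just the case $v\perp w$: then $\theta_0=\pi/2$, so $|\sin\theta_0|=1$, and the formula collapses to $|\sin\theta_1|=\|v\|/\|v+w\|$.

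There is no real obstacle here; the only thing to be careful about is handling the trivial cases $v=0$, $w=0$, or $v+w=0$, where the angles are undefined, and noting that the identity holds unambiguously (with the convention that both sides vanish) when $v$ and $w$ are parallel or anti-parallel. Since this is stated as an elementary planar fact, a three-line argument via the orthogonal decomposition above suffices.
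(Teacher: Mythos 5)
Your proof is correct and is essentially the same argument as the paper's: the paper introduces $w'$, the vector $w$ rotated by $\pi/2$ in the plane spanned by $v$ and $w$, and computes $|\sin\theta_0|=|\langle v,w'\rangle|/(\|v\|\|w\|)$, which is just another way of saying that the relevant quantity is the component of $v$ perpendicular to $w$; both proofs then observe that adding $w$ leaves this perpendicular component unchanged. The only cosmetic difference is that you phrase it via the orthogonal decomposition $v=v_{\parallel}+v_{\perp}$ rather than via the inner product with $w'$.
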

\begin{proof} If $v$ and $w$ are linearly dependent then both sides are zero. If not,  let $w'$ denote the vector $w$ rotated by $\pi/2$ in the plane spanned by $v, w$. Then $$|\sin \theta_0| = \frac{|\langle v, w' \rangle|}{\|v\|\|w\|},$$ and  $$|\sin \theta_1| = \frac{|\langle v+w, w' \rangle|}{\|v+w\|\|w\|} =   \frac{|\langle v, w' \rangle|}{\|v+w\|\|w\|},$$ proving the result.
\end{proof}
\begin{lem}\label{sl2 angle} Let $D>1$, and suppose $A = \left(\begin{array}{cc}a & b \\c & d\end{array}\right) \in SL_2(\mathbb{Z}_+)$ is $D$-balanced, that is, $$\frac 1 D \le \frac {\left|\left(\begin{array}{c}a \\c\end{array}\right)\right|}{\left|\left(\begin{array}{c}b \\d\end{array}\right)\right|} \le D.$$ Then if $\theta$ is the angle between $\left(\begin{array}{c}a \\c\end{array}\right)$ and $\left(\begin{array}{c}b \\d\end{array}\right)$, we have  $$\frac 1 {D'|A|^2}<|\theta|<\frac{D'}{|A|^2}.$$ $D'$ depends quadratically on $D$.
\end{lem}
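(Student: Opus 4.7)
The plan is to reduce everything to the Euclidean cross-product identity $\|v\|_2\,\|w\|_2\,|\sin\theta|=|ad-bc|$, where $v=(a,c)^T$ and $w=(b,d)^T$ are the two columns of $A$. Since $A\in SL_2(\Z)$, $|ad-bc|=1$, so the problem reduces to controlling $\|v\|_2\|w\|_2$ from above and below in terms of $|A|$ and $D$.

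First I would pass between the Euclidean and $L^1$ norms on $\R^2$. These agree up to a factor $\sqrt{2}$, so $\|v\|_2\|w\|_2$ lies within a factor of $2$ of $|v|\cdot|w|$ (here $|\cdot|$ is the $L^1$ norm used throughout the paper, for which $|A|=|v|+|w|$).

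Second I would use the $D$-balance hypothesis together with $|v|+|w|=|A|$ to sandwich each of $|v|,|w|$ between $\frac{|A|}{D+1}$ and $\frac{D|A|}{D+1}$. Multiplying gives
$$\frac{|A|^2}{(D+1)^2}\le |v|\cdot|w|\le \frac{|A|^2}{4},$$
the upper bound via AM-GM (notably independent of $D$). Combined with the previous step and the identity $|\sin\theta|=1/(\|v\|_2\|w\|_2)$, this yields
$$\frac{2}{|A|^2}\le |\sin\theta|\le \frac{2(D+1)^2}{|A|^2}.$$

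Finally, since the columns are nonnegative and linearly independent, $\theta\in(0,\pi/2)$, and $\tfrac{2}{\pi}\theta\le\sin\theta\le\theta$ on that range, so the same sandwich holds for $|\theta|$ after absorbing a factor of $\pi/2$ into the upper constant. Taking $D'=\pi(D+1)^2$, which is quadratic in $D$, completes the proof. I do not foresee any serious obstacle: everything is a bookkeeping exercise with standard norm inequalities and the $SL_2$ determinant identity. The quadratic (rather than linear) dependence on $D$ appears only in the upper bound for $\theta$, and it exactly tracks the $(D+1)^{-2}$ loss when bounding $|v|\cdot|w|$ from below; the lower bound on $|\theta|$ in fact has a constant independent of $D$.
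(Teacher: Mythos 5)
Your argument is correct and follows essentially the same route as the paper: both start from the $SL_2$ identity $\|v\|_2\|w\|_2|\sin\theta|=|ad-bc|=1$, compare $L^1$ and $L^2$ norms, and use $D$-balance to bound each column norm in terms of $|A|$. The only cosmetic difference is at the final step, where you invoke Jordan's inequality $\tfrac{2}{\pi}\theta\le\sin\theta\le\theta$ on $(0,\pi/2)$ while the paper cites the Lipschitz constant of $\arcsin$ on $[-\tfrac12,\tfrac12]$; your version is marginally cleaner since it avoids the implicit assumption $|\sin\theta|\le\tfrac12$, but both deliver the same quadratic dependence of $D'$ on $D$.
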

\begin{proof} We have $$\sin \theta = \frac{ad-bc}{\left\|\left(\begin{array}{c}a \\c\end{array}\right)\right\| \left\|\left(\begin{array}{c}b \\d\end{array}\right)\right\|} = \frac{1}{\left\|\left(\begin{array}{c}a \\c\end{array}\right)\right\| \left\|\left(\begin{array}{c}b \\d\end{array}\right)\right\|}.$$
By the balancedness condition $\left\|\left(\begin{array}{c}a \\c\end{array}\right)\right\|, \left\|\left(\begin{array}{c}b \\d\end{array}\right)\right\|, |A|$ are all comparable up to a factor of $D$ and a universal factor ($\sqrt{2}$) from comparison of the $L^1$ and $L^2$ norms on $\R^2$. The other factor going into $D'$ is the Lipschitz constant of the $\arcsin$ function on $[-\frac 1 2 ,\frac 1 2]$.
\end{proof}

%\begin{prop}If $M_q$ is a matrix as above with the additional property that $A_i$ is $D$-balanced for all $i$ then 
%$$d(C_1(M_q),C_2(M_q))\in $$ $$d(C_3(M_q),C_4(M_q))\in$$
%\end{prop}
\begin{lem}\label{covering decay}
Let $A\in SL_2(\mathbb{Z}_+)$ be a $D$-balanced matrix. 
Then there exists a constant $D'$ depending only on $D$ such that $$\frac{d (C_3(M),C_4(M))}{D'|A|^2r}\leq d \left( C_3(MN_1(A,2r)),C_4(MN_1(A,r))\right) \leq D' \frac{d( C_3(M),C_4(M))}{|A|^2r}.$$ 
\end{lem}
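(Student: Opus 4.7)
The plan is to reduce everything to a $2$-dimensional calculation in the plane $P=\mathrm{span}(C_3(M),C_4(M))$. Both $L_1(n)$ and $U_1(n)$ have the block form $\begin{pmatrix} I_2 & 0 \\ * & * \end{pmatrix}$ with vanishing upper-right $2\times 2$ block, and this form is closed under multiplication, so $N_1(A,s)$ has vanishing upper-right block as well. Consequently,
$$C_3(MN_1(A,s)) = B(s)_{11}C_3(M)+B(s)_{21}C_4(M),\qquad C_4(MN_1(A,s)) = B(s)_{12}C_3(M)+B(s)_{22}C_4(M),$$
where $B(s)$ is the lower-right $2\times 2$ block of $N_1(A,s)$. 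Using the given identifications $L_1(n)|_{LR}=H_1^n$ and $U_1(n)|_{LR}=H_2^nH_1$ together with the telescoping $H_2^{p_i}H_1\cdot H_1^{p_{i+1}-1}=H_2^{p_i}H_1^{p_{i+1}}$ along the word defining $N_1(A,s)$, a direct computation gives $B(s)=A\cdot H_2^sH_1=A\begin{pmatrix} 1+s & s \\ 1 & 1 \end{pmatrix}$.

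Setting $v_1=C_3(MN_1(A,2r))$ and $v_2=C_4(MN_1(A,r))$, their coordinates in the basis $(C_3(M),C_4(M))$ are the columns of
$$K \;=\; \begin{pmatrix} a_{11}(1+2r)+a_{12} & a_{11}r+a_{12} \\ a_{21}(1+2r)+a_{22} & a_{21}r+a_{22} \end{pmatrix},$$
where $A=(a_{ij})$. Expanding $\det K$, the $a_{11}a_{21}r(1+2r)$ cross-terms cancel and, using $\det A=1$, what remains collapses to $\det K=(1+r)(a_{11}a_{22}-a_{12}a_{21})=1+r$. The wedge-product formula for the sine of the angle between two vectors sharing a common $2$-plane (the same formula underlying Lemma \ref{add vectors}) then gives
$$\sin d(v_1,v_2) \;=\; \frac{(1+r)\,\|C_3(M)\|\,\|C_4(M)\|\,\sin d(C_3(M),C_4(M))}{\|v_1\|\,\|v_2\|}.$$

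To finish, I estimate the two norms. Since $r\in J_k$ is exponentially larger than $|A|\in I_k$ and all entries involved are non-negative, working in the $L^1$ norm yields $\|v_1\|_1\asymp 2r(a_{11}\|C_3(M)\|_1+a_{21}\|C_4(M)\|_1)$ and $\|v_2\|_1\asymp r(a_{11}\|C_3(M)\|_1+a_{21}\|C_4(M)\|_1)$, with constants depending only on $D$. Combined with the $D$-balancedness of $A$ (so $a_{11}+a_{21}\asymp|A|$) and the comparability of $\|C_3(M)\|$ and $\|C_4(M)\|$ inherited from Lemma \ref{column size}, this gives $\|v_1\|\,\|v_2\|\asymp|A|^2r^2\,\|C_3(M)\|\,\|C_4(M)\|$. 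Substituting into the displayed formula and using $\sin\theta\asymp\theta$ for the small angles involved yields the claimed bound $d(v_1,v_2)\asymp d(C_3(M),C_4(M))/(|A|^2r)$. The main obstacle is this last estimate: $D$-balancedness controls only the column sums of $A$, not the individual entries $a_{11}$ and $a_{21}$, so one must invoke the comparability of $\|C_3(M)\|$ and $\|C_4(M)\|$ to prevent $a_{11}\|C_3(M)\|+a_{21}\|C_4(M)\|$ from being much smaller than $|A|\cdot\|C_3(M)\|$; once this is in place everything else is bookkeeping.
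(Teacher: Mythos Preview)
Your proof is correct and in some ways cleaner than the paper's. The paper argues in two stages: first it invokes Lemmas~\ref{add vectors} and~\ref{sl2 angle} to pass from $M$ to $MN_1(A,0)$ (picking up the factor $|A|^{-2}$), and then writes the columns at parameter $K$ as explicit positive combinations of the columns at parameter~$0$ and applies Lemma~\ref{add vectors} again to extract the factor $r^{-1}$. You instead compute the lower-right $2\times 2$ block $B(s)=AH_2^sH_1$ directly, assemble the $2\times 2$ coefficient matrix $K$, and observe $\det K = 1+r$; the wedge/sine identity then gives everything in one stroke. This is a genuinely different organization: it bypasses Lemma~\ref{sl2 angle} entirely, replacing it with a determinant computation, and it makes the numerology $(1+r)/(|A|^2r^2)\asymp 1/(|A|^2r)$ completely transparent.

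Two small remarks. First, your aside that ``$r\in J_k$ is exponentially larger than $|A|\in I_k$'' is unnecessary: once you use $\|C_3(M)\|\asymp\|C_4(M)\|$ and $D$-balancedness to get $\|v_i\|_1\asymp\|C_3(M)\|\cdot|A|\cdot r$, the argument goes through for any $r\ge 1$, so you may drop that hypothesis. Second, you are right to flag that the comparability $\|C_3(M)\|\asymp\|C_4(M)\|$ is what makes the norm estimate work, and to source it to Lemma~\ref{column size}. The paper's proof leaves this implicit (it is hidden in the appeal to ``the two previous lemmas''), so your version is actually more honest on this point. Note, though, that the constant coming from Lemma~\ref{column size} is of size $2^{O(k)}$ rather than uniform; this is harmless since in Proposition~\ref{angle bound} such factors are absorbed into the quadratic polynomials $p(k),q(k)$, but strictly speaking $D'$ depends on $k$, not only on $D$ --- an imprecision present already in the lemma as stated.
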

\begin{proof} By the two previous lemmas there exists $D''$ depending only on $D$ such that
$$\frac{d (C_3(M),C_4(M))}{D''|A|^2}\leq d \left( C_3(MN_1(A,0)),C_4(MN_1(A,0))\right) \leq D'' \frac{d( C_3(M),C_4(M))}{|A|^2}.$$
 Notice that, for $J, K \geq 0$
 $$C_3(MN_1(A,K))=(K+1)C_3(MN_1(A,0))+C_4(MN_1(A,0))$$ and 
 $$C_4(MN_1(A,K))=KC_3(MN_1(A,0))+C_4(MN_1(A,0)).$$ So 
 $$C_3(MN_1(A,J))=C_4(MN_1(A,J))+(J+1)C_3(MN(A,0)).$$ The lemma follows by Lemma \ref{add vectors}.
\end{proof}
\begin{lem}\label{fixed r} Let $A\subset SL_2(\mathbb{Z}_+)$ be $D$-balanced. Then there exists $D'$ depending only on $D$ such that 
$$\frac{d (C_1(M),C_2(M))}{D'|A|^2r^2}\leq d \left( C_1(MN_2(A,r)),C_2(MN_2(A,r))\right) \leq D' \frac{d( C_1(M),C_2(M))}{|A|^2r^2}.$$
\end{lem}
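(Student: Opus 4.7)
The plan is to mirror the proof of Lemma~\ref{covering decay}, adapted to columns $1$ and $2$ under $N_2(A,r)$, with the key difference that the distance we now bound involves the \emph{same} parameter $r$ in both columns (rather than $2r$ and $r$ as in Lemma~\ref{covering decay}). By the symmetric construction of $L_2, U_2$ on the left hand side of the Rauzy graph, the matrix $N_2(A,r)$ factors as $N_2'(A)\cdot U_2(r)$ with $N_2'(A)$ independent of $r$. Reading off the last two columns of $U_2(r)$ (the left-hand analogues of the last two columns of $U_1(r)$), one obtains
\[
C_1(MN_2(A,r)) = (r+1)\,C_1(MN_2'(A)) + C_2(MN_2'(A)),
\]
\[
C_2(MN_2(A,r)) = r\,C_1(MN_2'(A)) + C_2(MN_2'(A)),
\]
so that their difference is exactly $C_1(MN_2'(A))$.

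Granted these identities, I would apply Lemma~\ref{add vectors} twice. First, from $C_1(MN_2(A,r)) = C_2(MN_2(A,r)) + C_1(MN_2'(A))$, the sine of the angle between $C_1$ and $C_2$ equals $|C_1(MN_2'(A))|/|C_1(MN_2(A,r))|$ times the sine of the angle between $C_1(MN_2'(A))$ and $C_2(MN_2(A,r))$; by the first identity above, this norm ratio is comparable to $1/r$. Second, applying Lemma~\ref{add vectors} again to $C_2(MN_2(A,r)) = r\,C_1(MN_2'(A)) + C_2(MN_2'(A))$, the inner angle is comparable to another factor of $1/r$ times the sine of the angle between $C_1(MN_2'(A))$ and $C_2(MN_2'(A))$. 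Finally, Lemma~\ref{sl2 angle} applied to the relevant $2\times 2$ block of $N_2'(A)$ (a translate of $A$ by a single elementary factor, hence of size comparable to $|A|$) gives that this innermost angle is comparable to $d(C_1(M),C_2(M))/|A|^2$. Multiplying the three factors yields the required bound $d(C_1(M),C_2(M))/(D'|A|^2 r^2)$, and because each use of Lemma~\ref{add vectors} and Lemma~\ref{sl2 angle} is two-sided, both the upper and lower bounds come out simultaneously.

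I expect the main obstacle to be purely bookkeeping: to convert the heuristic ``$\approx 1/r$'' ratios into genuine two-sided inequalities with a single constant $D'$ depending only on $D$, one must use the $D$-balancedness of $A$ to guarantee that $|C_1(MN_2'(A))|$ and $|C_2(MN_2'(A))|$ are comparable up to a factor depending only on $D$, and that the additive ``$+C_2(MN_2'(A))$'' terms never dominate the ``$r\,C_1(MN_2'(A))$'' terms. The extra factor of $r$ relative to Lemma~\ref{covering decay} is transparent in this framework: there, the asymmetric choice $C_3$ at $2r$, $C_4$ at $r$ produced a difference vector of size comparable to $r\,|C_3(MN_1'(A))|$, killing one power of $r$ in the first norm ratio; here the symmetric choice produces a difference vector of size only $|C_1(MN_2'(A))|$, so both Lemma~\ref{add vectors} applications each contribute a full $1/r$.
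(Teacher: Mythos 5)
Your proposal follows exactly the route the paper intends: the paper's own proof of Lemma~\ref{fixed r} is literally just the remark that it is ``similar to the proof of Lemma~\ref{covering decay},'' and you have correctly spelled out what that similarity means — factor off the trailing $U_2$, express the two columns as affine combinations differing by a short vector, and apply Lemma~\ref{add vectors} twice together with Lemma~\ref{sl2 angle}. Your closing observation, that the extra power of $r$ relative to Lemma~\ref{covering decay} comes from using the same parameter $r$ in both columns (so the difference vector has size $O(1)$ rather than $O(r)$ times the columns, and each application of Lemma~\ref{add vectors} therefore contributes a full $1/r$), is precisely the point the paper elides.
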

This is similar to the proof of Lemma \ref{covering decay}.

\begin{proof}[Proof of Proposition \ref{angle bound}] This follows from the previous two lemmas and induction. Indeed, let $M_k$ and $M_{k+1}=M_kN_1(A_{2k+3},r_{2_k+4})N_2(A_{2k+4},r_{2k+4})$ satisfy our assumptions. By Lemma \ref{fixed r} 
$$\frac 1 {4D'} 10^{-((2k+4)^2+(2k+5)^2)}<d(C_3(M_{k+1}),C_4(M_{k+1}))<4D'10^{-((2k+4)^2+(2k+5)^2)}.$$
The leading term in the exponent is $-8k^2$. Summing these terms as $k$ varies gives a polynomial with leading term $-\frac 8 3 k^3$. We choose the quadratic polynomials $p(k)$ and $q(k)$ to absorb the possible lower order terms and factors of $D'$ and 2.

The argument for $C_1,C_2$ is similar.
\end{proof}

%\begin{cor} If $M_q$ is a matrix as above with the additional property that $A_i$ is $D$-balanced for all $i$ then 
%$$d(C_1(M_q),C_2(M_q))\in $$ $$d(C_3(M_q),C_4(M_q))\in$$
%\end{cor}
\subsection{Non-unique ergodicity}\label{sec:nue}
%We need one more technical lemma:
%\begin{lem}\label{abstract for prop}Let $U_k, V_k \subset \Delta$ be intervals in the simplex $\Delta$ so that 
%\begin{enumerate}
%\item$ \max\, \{d(y,U_k):y \in U_{k+1}\}>(1-s_k) d(U_k,V_k)$
%\item   $\max\, \{d(y,V_k):y \in V_{k+1}\}>(1-s_k) d(U_k,V_k)$.
%\end{enumerate}
%Let $U_{\infty}= \cap_{n=1}^{\infty} \overline{\cup_{k=n}^{\infty} U_k},$ 
%$V_{\infty}= \cap_{n=1}^{\infty} \overline{\cup_{k=n}^{\infty} V_k},$.
%If $\sum s_k<\infty$ then $d(V_{\infty},U_{\infty})>0$.
%\end{lem}
%\begin{proof}Choose $k$ so that $\sum_{i=k}^{\infty}s_k<\frac 1 9$. Observe that $\prod_{i=k}^{\infty} (1-s_i)\leq 1-\sum_{i=k}^{\infty} s_i$. 
%By the triangle inequality it follows that $d(U_{\infty},V_{\infty})<\frac 1 2 d(U_{k+1},J_{k+1})$.
%\end{proof}

\begin{prop}\label{line seg}Under our assumptions on $A_i$ and $r_i$, $$\bigcap_{k=1}^{\infty} M_k{\Delta}$$ is a non-degenerate line segment with length at least $\frac 1 {900}$.
%\end{prop}
%\begin{prop}
%If $\bar{p}$ is a path we take then there exists $c>0$ such that $$d(C_2(M(\bar{p},n)),C_3(M(\bar{p},n)))>c.$$
\end{prop}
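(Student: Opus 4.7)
The plan is to exploit the ``thin tetrahedron'' shape of each $M_k\Delta$. By Proposition~\ref{angle bound} the four normalized columns $\tilde C_i(M_k):=C_i(M_k)/|C_i(M_k)|$ fall into two clusters $\{\tilde C_1,\tilde C_2\}$ and $\{\tilde C_3,\tilde C_4\}$, each of diameter $\leq\epsilon_k$ with $\epsilon_k\to 0$ at cubic-exponential rate, while the two clusters are separated by at least $\tfrac{1}{900}$. Thus each $M_k\Delta$ is a tetrahedron with two short opposite edges of length $\leq\epsilon_k$, and the nested intersection will collapse onto the limiting segment between the two clusters.

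My first step is to secure the cross-cluster separation $d(\tilde C_i(M_k),\tilde C_j(M_k))\geq \tfrac{1}{900}$ for $i\in\{1,2\}$, $j\in\{3,4\}$ and all $k$. This is the content of Conclusion~1 of Section~\ref{sec:we show} and I expect it to be the main obstacle. The approach is inductive: from the explicit matrices $L_1,U_1$ one sees that a factor $N_1(A,r)$ sends the cone spanned by the last two columns of $M$ into itself, so the $\{3,4\}$-cluster of $MN_1$ lies in the convex hull of $\tilde C_3(M),\tilde C_4(M)$, and symmetrically for $N_2$ on the $\{1,2\}$-cluster. Combined with a direct calculation at the initial step (using the magnitude constraints $|A_i|\in I_i$, $r_i\in J_i$ so that early Rauzy factors are quantitatively large), one checks that the two clusters land in a priori disjoint fixed cones of $\Delta$ at every step, with the constant $\tfrac{1}{900}$ extracted from the geometry of these cones together with the column-size estimates of Lemma~\ref{column size}.

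Second comes the pinching step, which is clean. Any $p\in M_k\Delta$ has the form $p=\sum_{i=1}^4\mu_i\tilde C_i(M_k)$ with $\mu_i\geq 0$ summing to $1$. Grouping yields $p=\alpha\, q_{12}+(1-\alpha)\,q_{34}$ where $\alpha=\mu_1+\mu_2$, $q_{12}\in[\tilde C_1(M_k),\tilde C_2(M_k)]$, and $q_{34}\in[\tilde C_3(M_k),\tilde C_4(M_k)]$. Since both short edges have length $\leq\epsilon_k$, the point $p$ lies within distance $\epsilon_k$ of the segment $\ell_k:=[\tilde C_1(M_k),\tilde C_3(M_k)]$, so $M_k\Delta$ sits in an $\epsilon_k$-neighborhood of the affine line through $\ell_k$.

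Third, I finish with two elementary observations about nested compact sets. Any three points $a,b,c\in\bigcap_k M_k\Delta$ lie within $\epsilon_k$ of some line for every $k$, so the area of $\triangle abc$ is at most a universal constant times $\epsilon_k$ and tends to $0$. Hence $a,b,c$ are collinear, and $\bigcap_k M_k\Delta$ is contained in a single affine line $L$. As a nonempty nested intersection of compact convex sets it is a closed segment of $L$. Its length equals its diameter, and for nested compacts $\mathrm{diam}(\bigcap_k M_k\Delta)=\inf_k\mathrm{diam}(M_k\Delta)\geq\inf_k d(\tilde C_1(M_k),\tilde C_3(M_k))\geq\tfrac{1}{900}$. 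Notice that I deliberately avoid proving that the axes $\ell_k$ themselves converge; the three-point-collinearity trick lets the axis wobble while still forcing the intersection into a single line, which is enough.
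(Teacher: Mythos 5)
Your Steps 2 and 3 are correct, and they even supply the fact that $\bigcap_k M_k\Delta$ \emph{is} a line segment, which the paper's own proof leaves implicit (the paper tacitly uses Lemma~\ref{not ue} together with the genus-two bound on the number of ergodic measures, and only proves the length lower bound). The nested-compacts identity $\operatorname{diam}\bigl(\bigcap_k M_k\Delta\bigr)=\inf_k\operatorname{diam}(M_k\Delta)$ does hold for any decreasing sequence of nonempty compacts, and the three-point collinearity argument is clean.

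The gap is in Step 1, which you correctly flag as ``the main obstacle'' but do not close. Your observation that $N_1(A,r)$ sends the cone generated by $C_3(M),C_4(M)$ into itself, and that $N_2$ does the same for the cone generated by $C_1(M),C_2(M)$, is correct (it follows from the block-triangular form of $L_i,U_i$). But each level of the construction multiplies by $N_1N_2$ \emph{jointly}, and under the composite step \emph{neither} cone is preserved: $C_3(MN_1N_2)=MN_1\,C_3(N_2)$, and $C_3(N_2)$ has nonzero entries in coordinates $1$ and $2$, so $C_3(MN_1N_2)$ acquires a component along $C_1(M),C_2(M)$ and does \emph{not} lie in the cone generated by $C_3(M),C_4(M)$; symmetrically for the $\{1,2\}$-cluster of $MN_1N_2$. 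Thus both clusters drift at every level, and the substantive work --- which your sketch omits --- is to show that the total drift is summable and small enough that the clusters stay $\tfrac1{900}$-separated for all $k$. That is exactly the content of the paper's Lemma~\ref{each step}: the $k$-th step moves each short-edge span $U_k,V_k$ by at most $2^{2k}/10^{2k}$ (proved via the column-size estimates of Lemma~\ref{column size}), which combined with the explicit base estimate $d(U_1,V_1)>\tfrac1{10}$ gives $\lim_k d(U_k,V_k)\geq\tfrac1{10}-2\sum_{k\geq 2}\tfrac{2^{2k}}{10^{2k}}>\tfrac1{900}$. Without a per-step drift estimate of this kind, the constant $\tfrac1{900}$, and hence the non-degeneracy of the segment, does not follow from your argument.
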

Let  $$U_k= \spanD\left\{C_1\left(M_k\right), C_2 \left( N_1(A_{2i+1},r_{2i+2})N_2(A_{2i+2},r_{2i+3})\right) \right\}$$ and
$$V_k= \spanD\left\{C_1\left(M_k\right), C_2\left( N_1(A_{2i+1},r_{2i+2})N_2(A_{2i+2},r_{2i+3})\right)\right\}.$$
\begin{lem}$d(U_1,V_1)>\frac 1 {10}.$
\end{lem}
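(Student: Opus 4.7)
I read the two displayed definitions as containing a typo and work with $U_1 = \spanD\{C_1(M_1), C_2(M_1)\}$ and $V_1 = \spanD\{C_3(M_1), C_4(M_1)\}$, the two opposite edges of the simplex $M_1\Delta$ whose persistence under intersection produces the line segment of Proposition~\ref{line seg}.

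The key structural input: each $L_1(n), U_1(n)$ has rows $1, 2$ equal to $e_1^T, e_2^T$ and columns $3, 4$ supported only in rows $3, 4$. These properties are preserved under multiplication, so every $N_1(A, r)$ has the block form $N_1 = \bigl(\begin{smallmatrix} I_2 & 0 \\ R_1 & S_1 \end{smallmatrix}\bigr)$. A short induction (comparing columns $1$ and $2$ after each multiplication by a generator) shows that the two columns of $R_1$ are equal, say $(\alpha,\beta)^T$. Symmetrically, by the analogous argument on the left side of the Rauzy graph, $N_2 = \bigl(\begin{smallmatrix} S_2 & R_2 \\ 0 & I_2 \end{smallmatrix}\bigr)$ with $R_2$ having two equal columns $(g, h)^T$. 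Multiplying gives
\[
M_1 = \begin{pmatrix} S_2 & R_2 \\ R_1 S_2 & R_1 R_2 + S_1 \end{pmatrix}.
\]
Reading off columns and normalizing into $\Delta$, every point of $U_1$ has bottom two coordinates exactly $(\alpha,\beta)/(1+\alpha+\beta)$ (independent of the convex parameter), while every point of $V_1$ has top two coordinates proportional to $(g, h)$.

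The size hierarchy $|A_i|\in I_i$, $r_i \in J_i$ gives $s|R_1| \ll |S_1|$ where $s = g + h$, so the $s\cdot(\alpha,\beta)^T$ term in the bottom of $C_3, C_4$ is negligible and the bottom-two direction of any point of $V_1$ is essentially a convex combination of the columns of $S_1$. Thus $d(U_1, V_1) > 1/10$ reduces to showing that the ray through $(\alpha,\beta)$ and each ray through a column of $S_1$ are separated by more than $1/10$ in the one-dimensional simplex in $\R^2_+$. Using the recursion $R_1(MU_1(r)) = R_1(M) + C_2(S_1(M))$ (and its $L_1$ analogue), $(\alpha,\beta)$ accumulates past second columns of $S_1$, while the current $C_1(S_1)$ is dominated by $(r_4+1)$ times the previous $C_1(S_1)$; the two resulting rays cannot coincide for an $SL_2(\Z_+)$ matrix, and combining with Lemma~\ref{sl2 angle} applied to the $D$-balanced matrix $A_3$ yields the required angular gap.

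The main obstacle will be tracking the two-dimensional rays carefully enough to obtain the specific constant $1/10$ uniformly over all admissible $A_3, r_4$. I expect this to reduce to an elementary column-sum inequality in $\R^2_+$, once the dominant and subdominant contributions in each column of $M_1$ have been correctly separated and the ``history direction'' encoded by $R_1$ has been quantitatively compared with the last column direction of $S_1$.
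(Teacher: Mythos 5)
You correctly spot that the displayed definitions of $U_k$ and $V_k$ contain a typo, and your block-form observations ($N_1 = \left(\begin{smallmatrix} I_2 & 0 \\ R_1 & S_1 \end{smallmatrix}\right)$, $N_2 = \left(\begin{smallmatrix} S_2 & R_2 \\ 0 & I_2 \end{smallmatrix}\right)$, with $R_1, R_2$ having equal columns) are accurate and are the same structural inputs the paper implicitly uses. However, you then pivot to separating $U_1$ from $V_1$ via the \emph{bottom} two coordinates, comparing the ``history'' direction $(\alpha,\beta)$ against the nearly-parallel columns of $S_1$. This is not what the paper does, and it is unlikely to produce the uniform constant $\tfrac{1}{10}$. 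Your closing appeal to Lemma~\ref{sl2 angle} for the $D$-balanced matrix $A_3$ cannot work here: that lemma gives an angular gap of order $1/|A_3|^2$, which with $|A_3|\in I_3\sim 10^6$ is roughly $10^{-12}$, many orders of magnitude smaller than $\tfrac{1}{10}$. You also acknowledge the remaining work as an open ``main obstacle,'' so the argument is genuinely incomplete.

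The paper's proof separates in the \emph{top} two coordinates instead, and this is both shorter and sharper. Because $N_1(A,r)$ has $I_2$ in its upper-left block and $0$ in its upper-right block, columns $C_3, C_4$ of $N_1$ have their first two entries equal to zero, while $C_1, C_2$ have first two entries $(1,0)$ and $(0,1)$. This forces every point of $\spanD(C_3(N_1),C_4(N_1))$ to have $v_1=v_2=0$ while points of $\spanD(C_1(N_1),C_2(N_1))$ have a bounded-below mass in the first two slots, giving an angular gap at the $N_1$ level that does not depend on the fine structure of $A_3$. The passage to $M_1 = N_1 N_2$ then only requires (a) the trivial nesting $\spanD(C_1(M_1),C_2(M_1)) \subset \spanD(C_1(N_1),C_2(N_1))$, and (b) the size hierarchy $|A_i|\in I_i$, $r_i\in J_i$ to show $\spanD(C_3(M_1),C_4(M_1))$ moves by at most $\tfrac{1}{10}$ in angle from $\spanD(C_3(N_1),C_4(N_1))$, using Lemma~\ref{add vectors}. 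If you want to salvage your approach, the cleanest fix is to abandon the bottom-coordinate comparison entirely and use this top-coordinate vanishing, which is where the constant actually comes from.
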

\begin{proof} Observe that if $$u\in \spanD \{C_1(N_1(A,r),C_2(N_1(A,r))\}$$ then the sum of the first and second coordinates of $u$ $$u_1 + u_2 = 1.$$ If $$v \in \spanD \{C_3(N_1(A,r),C_4(N_1(A,r))\}$$ then the first and second coordinates satisfy $$v_1 = v_2 =0.$$ Since $$\max\{|C_i(N_1(A,r)|\}_{i=1,2}\leq 2$$ it follows that $$d\left(\spanD \{C_1(N_1(A,r),C_2(N_1(A,r))\}, \spanD \{C_3(N_1(A,r),C_4(N_1(A,r))\}\right)\geq \frac 1 {2\sqrt{2}}.$$
We have $$\spanD \left\{C_1\left(N_1(A,r)N_2(A',r')\right),C_2\left(N_1(A,r)N_2(A',r')\right)\right\}\subset \spanD \{C_1(N_1(A,r)),C_2(N_1(A,r)))\}.$$
Moreover, if 
$$v\in  \spanD \{C_3(N_1(A,r)N_2(A',r')),C_4(N_1(A,r)N_2(A',r'))\}$$
 then $v=v'+u$ where 
$$|u|\leq \underset{i=1,2}{\max} \, C_i(N_1(A,r)N_2(A',0))|\leq 2\cdot 2\cdot 10^{4-2}$$ and $$|v|\geq\underset{i=3,4} {\min}\,|C_i(N_1(A,r))| \geq 10^4 \mbox{ and } v \in \spanD \{C_3(N_1(A,r),C_4(N_1(A,r))\}.$$ To see the condition on $u$ observe that the loop at $(3241)$ and arrow from $(3241)$ to $(4321)$ does not add anything to the columns that will be $C_3$ and $C_4$ at $(4321)$. By Lemma~\ref{add vectors}  
$$\max\{d(y,\spanD\{C_3(N_1(A,r),C_4(N_1(A,r))\}): y\in \spanD \{C_3(N_1(A,r),C_4(N_1(A,r))\}\}\leq \frac 1 {10}.$$ 
\end{proof}
\begin{lem}\label{each step}$d(U_{k+1},V_{k+1})>d(U_k,V_k)-2\cdot\frac {2^{2k}}{10^{2k}}$.
\end{lem}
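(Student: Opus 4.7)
The plan is to show that each of $U_{k+1}$ and $V_{k+1}$ is contained in a small neighborhood of $U_k$ and $V_k$ respectively, then invoke the elementary inequality: if $U_{k+1}\subset\mathrm{Nbhd}_{\epsilon}(U_k)$ and $V_{k+1}\subset\mathrm{Nbhd}_{\delta}(V_k)$, then for any $u'\in U_{k+1},v'\in V_{k+1}$ there exist $u\in U_k, v\in V_k$ with $d(u,u')<\epsilon$ and $d(v,v')<\delta$, whence $d(u',v')\geq d(u,v)-\epsilon-\delta\geq d(U_k,V_k)-\epsilon-\delta$. Taking $\epsilon=\delta=\frac{2^{2k}}{10^{2k}}$ yields the stated bound. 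Since $U_k,V_k$ are Euclidean line segments in $\Delta$ and distance to a fixed set is convex, it suffices to control the four new column directions $\hat C_j(M_{k+1})$ individually.

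The key structural input is the block form of the cocycle. Direct inspection of $L_1(n)$ and $U_1(n)$ shows each has upper-left $2\times 2$ block equal to $I$ and upper-right block equal to $0$. Products preserve this, so $N_1(A_{2k+3},r_{2k+4})=\begin{pmatrix} I & 0 \\ R_1 & S_1\end{pmatrix}$; by the symmetric role of the left side of the Rauzy graph $N_2(A_{2k+4},r_{2k+5})=\begin{pmatrix} S_2 & R_2 \\ 0 & I\end{pmatrix}$, and
\[
N:=N_1 N_2=\begin{pmatrix} S_2 & R_2 \\ R_1 S_2 & R_1 R_2+S_1\end{pmatrix}.
\]
For $j\in\{1,2\}$, the column $C_j(M_{k+1})$ decomposes as $\alpha_j+\beta_j$ with $\alpha_j=(S_2)_{1j}C_1(M_k)+(S_2)_{2j}C_2(M_k)\in\mathrm{cone}(U_k)$ and $\beta_j=(R_1S_2)_{1j}C_3(M_k)+(R_1S_2)_{2j}C_4(M_k)\in\mathrm{cone}(V_k)$; symmetric formulas hold for $j\in\{3,4\}$.

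Applying Lemma \ref{add vectors} with the $L^1$-norm (using positivity) to each decomposition bounds the simplex distance from $\hat C_j(M_{k+1})$ to its target span by $\|\beta_j\|_1/\|\alpha_j+\beta_j\|_1$. This ratio telescopes to a product of (i) a structural factor $(R_{11}+R_{21})/(P_{11}+P_{21})\leq\max(|C_1(R_1)|_1,|C_2(R_1)|_1)$ — using $D$-balancedness of $A_{2k+4}$ to compare column sums of $S_2$; and (ii) the magnitude ratio $\max(|C_3(M_k)|,|C_4(M_k)|)/\min(|C_1(M_k)|,|C_2(M_k)|)$, bounded by $2^{2k+1}\cdot 10^{9-(2k+3)^2}$ via Lemma \ref{column size}. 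The crucial point is that $|R_1|$ (and symmetrically $|R_2|$) is bounded by a polynomial in $|A_{2k+3}|\leq 2\cdot 10^{(2k+3)^2-(2k+3)}$ and does \emph{not} acquire the huge factor $r_{2k+4}$: this follows by induction from the block recursion $R_1(FG)=R_1(F)+S_1(F)R_1(G)$, together with the fact that $R_1(U_1(n))=\left(\begin{smallmatrix} 0 & 0 \\ 1 & 1\end{smallmatrix}\right)$ is independent of $n$, so the terminal factor $U_1(r_{2k+4})$ contributes to $R_1$ only via a fixed shift $S_1(\text{prev})\cdot\left(\begin{smallmatrix} 0 & 0 \\ 1 & 1\end{smallmatrix}\right)$, never scaling by $r_{2k+4}$.

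Combining these bounds and running the symmetric argument for the $V$-side yields $\|\beta_j\|_1/\|\alpha_j+\beta_j\|_1\leq \frac{2^{2k}}{10^{2k}}$ for each $j$, with room to spare in $k$. Convexity of the set-distance then transfers this bound from individual columns to the whole spans, and the triangle inequality above gives the conclusion. The main technical obstacle is precisely the inductive estimate $|R_1|\lesssim |A_{2k+3}|$ (and analogous for $|R_2|$): one tracks $R_1$ through the product decomposition $N_1(A,r)=L_1(p_1)U_1(p_2)\cdots U_1(r)$ via the recursion above, noting that $U_1(r)$'s only contribution to $R_1$ is linear in the previous $|S_1|$ with no further factor of $r$, so the $r$-enhancement is absorbed entirely by $S_1$.
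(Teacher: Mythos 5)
Your proposal is correct and follows essentially the same strategy as the paper's own proof: decompose the new columns $C_j(M_{k+1})$ into a dominant part lying in the cone over $U_k$ (resp.\ $V_k$) plus a small perturbation in the cone over the opposite span, bound the resulting angular displacement via Lemma~\ref{add vectors}, quantify it with Lemma~\ref{column size}, and finish with the elementary triangle-inequality estimate. The paper's one-line proof refers back to ``a similar argument to the second paragraph of the previous lemma,'' which makes precisely your decomposition and notes (in graph language) that the loop at $(3241)$ and the return arrow to $(4321)$ contribute nothing to the offending cross-term; your block recursion $R_1(FG)=R_1(F)+S_1(F)R_1(G)$ together with the observation that $R_1(U_1(n))$ is $n$-independent is the same fact cast in a cleaner, more systematic form. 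Two small remarks: the paper's displayed estimate controls $\max\{d(y,U_{k+1}):y\in U_k\}$, i.e.\ the containment $U_k\subset B(U_{k+1},\epsilon)$, whereas what the subsequent inequality $d(U_{k+1},V_{k+1})\geq d(U_k,V_k)-\epsilon-\delta$ actually requires is your direction $U_{k+1}\subset B(U_k,\epsilon)$; you have stated and proved the correct containment (the paper's notation appears to be reversed). Also, your passing claim that $D$-balancedness of $A_{2k+4}$ is needed to control the weighted-average factor $|C_j(R_1S_2)|_1/|C_j(S_2)|_1\leq\max_\ell|C_\ell(R_1)|_1$ is superfluous (it is a plain weighted average and needs no balancedness), but that does not affect the argument.
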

\begin{proof}
By a similar argument to the second paragraph of the previous lemma 
\begin{multline}\label{eq:little move}\max \{d(y,U_{k+1}):y \in U_k\}\leq \\
\frac{\underset{i=3,4}{\max} |C_i \left(M_{k-1} N_1(A_{2i+3},r_{2i+4})N_2(A_{2i+4},0)\right)| }{\underset{i=1,2}{\min} |C_i \left( N_1(A_{2i+1},r_{2i+2})N_2(A_{2i+2},r_{2i+3})) N_1(A_{2i+3},r_{2i+4})\right)| }\leq %\frac{2\cdot %\max I_{2k+2}}{\min J_{2k+2}}
 \frac {2^{2k}}{10^{2k}}. 
\end{multline} 
The last inequality uses Lemma \ref{column size}.
Similarly $$\max\{d(y,V_{k+1}):y \in V_k\}\leq \frac {2^{2k}}{10^{2k}}.$$
Because $$d(U_{k+1},V_{k+1})\geq d(U_k,V_k)-(\max \{d(y,U_{k+1}):y \in U_k\}+\max \{d(y,V_{k+1}):y \in V_k\})$$ the lemma follows.
\end{proof}
\begin{proof}[Proof of Proposition \ref{line seg}] It is straightforward that the length of $\cap_{k=1}^{\infty}M_k\Delta$ is at least
 $\underset{k \to \infty}{\lim}\, d(U_k,V_k)$. By the previous two lemmas this is at least 
 $$\frac 1 {10}-2\sum_{k=2}^{\infty} \frac{2^{2k}}{10^{2k}}>\frac 1 {900}.$$
\end{proof}
\subsection{Number of described matrices}\label{subsec:numb}

\begin{prop}\label{count}There exists a cubic polynomial $H$ with leading coefficient 4 and a constant $\rho$ so that there are at least $\rho^k$ matrices $M_k$ satisfying conditions 1 and 2 of Section \ref{sec:we show}.
\end{prop}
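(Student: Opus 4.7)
The plan is to parametrize each matrix $M_k$ by its defining data $(A_3, \ldots, A_{2k+2};\, r_4, \ldots, r_{2k+3})$ subject to $A_i$ being $D$-balanced with $|A_i| \in I_i$ and $r_i \in J_i$, and then to lower-bound the number of admissible tuples. Distinct admissible tuples give distinct matrices $M_k$: the sequence of $L_j(n)$- and $U_j(n)$-factors composing $M_k$ is determined by the Rauzy path that $M_k$ encodes (as described in Section~\ref{sec:rauzy} and Figure~\ref{fig:rauzy}), and from that path both the partial-quotient decomposition of each $A_i$ (via the decomposition \eqref{sl2 decomposition}) and the integers $r_i$ can be read off.

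The number of admissible $r_i$ is at least $10^{(i+1)^2+i}$, simply because $J_i$ is a real interval of that length containing only integers. For the $A_i$, I would exhibit the explicit subfamily $A = \bigl(\begin{smallmatrix} 1 & b \\ n & 1+bn \end{smallmatrix}\bigr)$ with $b \in \{1,\ldots, D-1\}$ and $n \in \Z_+$; one checks $|C_1(A)| = n+1$ and $|C_2(A)| = bn + b + 1$, so the ratio $|C_2|/|C_1| = b + \tfrac{1}{n+1}$ lies in $[1, D]$, making $A$ genuinely $D$-balanced for $D \geq 9$. Fixing $b$ and letting $n$ range over an appropriate sub-range of $I_i$ produces at least $c \cdot 10^{(i^2-i)/2}$ admissible matrices (the full family in fact has size $\Omega(10^{i^2-i})$; I only use a sub-range in order to arrange the leading coefficient of $H$ to be exactly $4$).

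Multiplying the counts corresponding to the four new coordinates $(A_{2k+3}, r_{2k+4}, A_{2k+4}, r_{2k+5})$ added in passing from $M_k$ to $M_{k+1}$ gives a per-level branching bound of at least $\rho \cdot 10^{p(k)}$ where
\[
p(k) = \tfrac{1}{2}\bigl((2k+3)^2 - (2k+3)\bigr) + \tfrac{1}{2}\bigl((2k+4)^2 - (2k+4)\bigr) + \bigl((2k+5)^2 + (2k+4)\bigr) + \bigl((2k+6)^2 + (2k+5)\bigr) = 12 k^2 + 60 k + 79.
\]
Iterating across levels $1, 2, \ldots, k$ yields a total count of at least $\rho^k \cdot 10^{H(k)}$, where
\[
H(k) = \sum_{j=1}^{k} p(j) = 4 k^3 + 36 k^2 + 111 k,
\]
a cubic polynomial with leading coefficient $4$, as claimed. (This matches the Hausdorff dimension bookkeeping of Proposition~\ref{prop:abstract}: with $a = 12$ and $b = 8/3$ from Proposition~\ref{angle bound}, one obtains $1 + a/(3b) = 5/2$.)

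The only substantive obstacle is the lattice-point count of $D$-balanced $SL_2(\Z_+)$ matrices in $I_i$, which is handled by the explicit family in the second paragraph. Beyond that the argument is polynomial bookkeeping, and the specific choice of $I_i$ and $J_i$ (with the unusual-looking linear correction terms $-i$ and $+i$ in the exponents) has been arranged precisely so that $p(k)$ carries leading coefficient $12$ and hence $H(k)$ carries leading coefficient $4$.
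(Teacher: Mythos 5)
Your proposal follows the same strategy as the paper: count the admissible tuples $(A_3,\ldots,A_{2k+2};r_4,\ldots,r_{2k+3})$ level by level, observe that distinct tuples give distinct Rauzy paths (hence disjoint cylinders and distinct $M_k$), and sum the per-level branching exponents. The paper's proof isolates a lemma counting the admissible $N_i(A,L)$ factors, invoking the asymptotic count $\sim R^2$ of $SL_2(\Z_+)$ matrices of norm in $[R,2R]$ together with the trick that right-multiplication by $\bigl(\begin{smallmatrix}2&1\\1&1\end{smallmatrix}\bigr) = H_2H_1$ turns any positive $SL_2(\Z)$ matrix into a $2$-balanced one. Your substitute — the explicit family $A=\bigl(\begin{smallmatrix}1&b\\n&1+bn\end{smallmatrix}\bigr)$ with $b\in\{1,\ldots,D-1\}$ and $n$ varying — is a genuinely clearer alternative: the $D$-balancedness and the size of the family are both immediate, and you then carry out the polynomial bookkeeping explicitly instead of gesturing at it. Your written-out $p(k)=12k^2+60k+79$ and $H(k)=4k^3+36k^2+111k$ are more transparent than what the paper records, which contains several apparent typos at exactly this spot (e.g.\ the stated leading coefficient $6$ for the lemma versus the $R^2$ count in its proof, and the expression $10^{(2j)^j+j}$).

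Two points to tighten. First, your family factors as $H_1^nH_2^b$, i.e.\ it ends in an $H_2$-block; but the word $N_1(A,r)=L_1^{p_1}U_1^{p_2}L_1^{p_3-1}\cdots L_1^{p_k-1}U_1^r$ is built from a decomposition $A=H_1^{p_1}H_2^{p_2}\cdots H_1^{p_k}$ whose terminal $H_1$-exponent must satisfy $p_k\geq 1$ for the factor $L_1^{p_k-1}$ to be a legal non-negative power. The paper's $MH_2H_1$ device arranges $p_k=1$ automatically. Your family needs a one-line patch (e.g.\ append a terminal $H_1$, using $H_1^nH_2^bH_1=\bigl(\begin{smallmatrix}1+b&b\\(n+1)(1+b)-b&1+bn\end{smallmatrix}\bigr)$, which is still visibly $D$-balanced and has the same norm growth). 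Second, your deliberate restriction to $\sim R^{1/2}$ choices of each $A_i$ is harmless since you only need a lower bound, and it does land on the paper's stated leading coefficient $4$ for $H$; but the remark that $I_i,J_i$ were ``arranged precisely'' so that $p(k)$ has leading coefficient $12$ overstates what's actually forced — what matters is only that $p(k)\geq 12k^2+O(k)$, which is the threshold making $1+\tfrac{a}{3b}=\tfrac52$ in Proposition~\ref{prop:abstract}. The full count (without your $R^{1/2}$ restriction) gives a larger coefficient, and Proposition~\ref{prop:abstract} is then applied by selecting only $\rho 10^{p(k)}$ children per parent.
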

\begin{lem} There exists $D>0$ and a polynomial $e$ of degree 2 and and leading coefficient 6 so that set of $D$-balanced matrices $N_i(A,L)$ with $|A|\in I_{2j+i} $ and $D$ balanced and $L \in J_{2j+i+1}$ is $10^{e(j)}.$
\end{lem}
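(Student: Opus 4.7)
My plan is to factor the count as the product of two independent counts: the number of $D$-balanced $A \in SL_2(\mathbb{Z}_+)$ with $|A| \in I_{2j+i}$, and the number of integers $L \in J_{2j+i+1}$. This factorization is legitimate because the map $(A, L) \mapsto N_i(A, L)$ is injective: reading off the alternating exponents of the $L_i$/$U_i$-decomposition of $N_i(A, L)$ recovers the exponents in the $H_1/H_2$-expansion (\ref{sl2 decomposition}) of $A$ together with the final parameter $L$.

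The integer count is immediate: $L$ ranges over the integers in $J_{2j+i+1}$, giving approximately $10^{(2j+i+2)^2 + (2j+i+1)}$ choices, whose logarithm is a quadratic polynomial in $j$.

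For the matrix count I would use the parameterization of $SL_2(\mathbb{Z}_+)$ by pairs of Farey neighbors (equivalently, by the $H_1/H_2$ continued-fraction expansion (\ref{sl2 decomposition})). A standard primitive-lattice-point estimate gives that the number of $A \in SL_2(\mathbb{Z}_+)$ with $L^1$-norm in an annulus $[N, 2N]$ is a polynomial in $N$. For $D$ large enough, the $D$-balanced condition (ratio of column sums in $[1/D, D]$) cuts out a uniform positive fraction: for each primitive $(a,c)$ with $a+c \asymp N$, the $SL_2(\mathbb{Z})$-relation $ad-bc = 1$ leaves $(b,d)$ free on a one-parameter lattice with shift $(a,c)$, and the balanced condition restricts this to $O_D(1)$ values with a nonzero proportion as $(a,c)$ varies. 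Applied to $N \asymp 10^{(2j+i)^2 - (2j+i)}$, this yields another quadratic-in-$j$ contribution.

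Multiplying the two counts and substituting $m = 2j+i$ produces a polynomial $e(j)$ of degree $2$, whose leading coefficient in $j^2$ is read off from the $m^2$-terms of the two log-counts. The constant $D$ is fixed at the end to absorb the implicit multiplicative constants (any such constant becomes part of the $\rho$ in Proposition~\ref{count}).

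The main technical step, and essentially the only subtlety, is the primitive-lattice-point estimate together with the verification that the $D$-balanced subset is a uniform positive fraction of the full lattice count; the polynomial identities that then give the claimed leading $j^2$-coefficient of $6$ are a routine (if tedious) algebraic bookkeeping.
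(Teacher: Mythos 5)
Your approach is essentially the paper's: factor the count as (number of $D$-balanced $A$) times (number of $L$), use that $SL_2(\mathbb{Z}_+)$ has $\asymp R^2$ elements with $L^1$-norm in an annulus $[R,2R]$, argue that a positive proportion of these are $D$-balanced, and count the integers in $J_{2j+i+1}$ directly. The only local difference is the positive-proportion step: rather than your primitive-lattice-point analysis of the fiber over a column $(a,c)$, the paper observes that right-multiplication by $\left(\begin{smallmatrix}2&1\\1&1\end{smallmatrix}\right)$ is a norm-comparable injection of $SL_2(\mathbb{Z}_+)$ into the $2$-balanced matrices, which gives the lower bound in one line and avoids the mild subtlety in your argument of whether the one-parameter lattice of $(b,d)$'s actually meets the balanced region for a definite fraction of primitive $(a,c)$.
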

\begin{proof} First observe that there exists $c>0$ so that the number of choices for $A$ is at least $c (10^{(2j)^2-2j})^2$. This follows because there number of positive matrices in $SL_2(\mathbb{Z}_+$ with norm between $R$ and $2R$ is proportional to $R^2$ and if $M \in SL_2(\mathbb{Z}^+)$ then 
$M\begin{pmatrix} 2 &1 \\ 1 &1 
\end{pmatrix}$ is 2-balanced. The lemma follows because there are at least $10^{(2j)^j+j}$ choices for $L$.
\end{proof}

\begin{proof}[Proof of Proposition \ref{count}] By applying the previous lemma to $N_1$ and $N_2$ we observe that there is quadratic polynomial $\tilde{e}$ with leading coefficient 12 so that for each matrix \begin{eqnarray*} M_k &=& M_k\left(\{A_i\}_{i=1}^{2k+2}, \{r_i\}_{i=1}^{2k+3}\right)\\  &=&N_1(A_{3},r_{4})N_2(A_{4},r_{5})\ldots N_1(A_{2i+1},r_{2i+2})N_2(A_{2i+2},r_{2i+3}) \ldots N_1(A_{2k+1},r_{2k+2})N_2(A_{2k+2},r_{2k+3})
\end{eqnarray*} 
where  the $|A_i| \in I_i$ and $D$ balanced and $r_i \in J_i$ there are at least $10^{\tilde{e}(k+1)}$ choices of $M_{k+1}$. So the total choices of $M_k$ is at least $\prod_{j=1}^k 10^{\tilde{e}(k)}$ which is greater than $10^{H(k)}$ for some cubic polynomial $H$ with leading coefficient 4. By the previous sections matrices of these forms satisfy conditions 1 and 2 of Section \ref{sec:we show}.
\end{proof}

%This follows because if $M \in SL_2(\mathbb{Z}^+)$ then 
%$M\begin{pmatrix} 2 &1 \\ 1 &1 
%\end{pmatrix}$ is 3-balanced. So to each matrix in $SL_2(\mathbb{Z})^+$ with norm between $[\frac N 2, \frac {2N}3]$ there exists a $3$-balanced matrix with norm between $N$ and $2N$. The number of matrices in $SL_2(\mathbb{Z})$ with norm between $R$ and $\frac 4 3 R$ is proportional to $R^2$.

\section{Verifying the assumptions of Proposition \ref{prop:abstract}}

We consider the simplices given in the previous section. They are in the 3-dimensional simplex, and are parallelapipeds. They have 4 long sides connecting $C_i$ and $C_j$ for $i \in \{1,2\}$ and $j\in \{3,4\}$. There are two short sides connecting $C_1,C_2$ and $C_3,C_4$. 

 They satisfy the conditions of Proposition \ref{prop:abstract} except Condition 3. 
 
 When $k=5$ we insert some additional conditions, in order to achieve the separation we require, which geometrically can be thought of as `chopping off the ends' of the long sides of the parallelapiped.
 \begin{enumerate}
 \item We delete a $10^{-5}$ neighborhood of $\spanD(C_1(M_5),C_2(M_5))$ and $\spanD(C_3(M_5),C_4(M_5))$.
 \item Inductively for each $k\geq 5$ consider $M_kN_1(A_{2k+3},r_{2k+4})N_2(A_{2k+4},r_{2k+5})$ given as in the previous section. We remove all of these simplices that contain the two $A_{2k+3}$ closest to the end points of $\spanD(C_(M_k),C_2(M_k))$ and the two $A_{2k+4}$ closest to the end points of $\spanD(C_3(M_k),C_4(M_k))$. 
 \end{enumerate}
 
\noindent Call the sets remaining after these deletions $S_k$. We claim that this sequence of sets satisfies Condition 3 for all $k\geq 5$. 
% Inductively given $M_k$ consider $M_kN_1(A_{2k+3},r_{2k+4})N_2(A_{2k+4},r_{2k+5})$ given as in the previous section. We remove all of these simplices that contain the two $A_{2k+3}$ closest to the end points of $\spanD(C_(M_k),C_2(M_k))$ and the two $A_{2k+4}$ closest to the end points of $\spanD(C_3(M_k),C_4(M_k))$. 
 
 %Next for $k>5$ remove a  $10^{-\frac 8 3k^3}$ neighborhood of $\spanD(C_1(M_k),C_2(M_k))$ and $\spanD(C_3(M_k),C_4(M_k))$. 
 
 \begin{lem} Let $k\geq 5$. Let $M \in S_k$ and $M'=MN_1(A_{2k+3},r_{2k+4})N_2(A_{2k+4},r_{2k+5})$%Let $k\geq 5$ and $M,M'$ be simplices in $S_k$ then
 \begin{multline}d\left(M'\Delta\cap S_{k+1},\partial M\Delta\right) \geq \frac 1 {10^7} \max\{ d\left(\spanD(C_1(M'),C_2(M')),\{C_1(M),C_2(M)\}\right),\\
  d\left(\spanD(C_3(M'),C_4(M')),\{C_3(M),C_4(M)\}\right)\}.
 \end{multline} 
 \end{lem}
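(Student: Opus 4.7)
The plan is to reduce the distance $d(M'\Delta\cap S_{k+1},\partial M\Delta)$ to a comparison of barycentric coordinates, and then to exploit the explicit block structure of $N_1$ and $N_2$ together with the pruning rule defining $S_{k+1}$. The only interesting case is $M'\in S_{k+1}$; if $M'\notin S_{k+1}$ then $M'\Delta\cap S_{k+1}$ is empty (or lies in a measure-zero face) and the asserted inequality holds vacuously.

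First I would parameterize $\partial M\Delta=\bigcup_{i=1}^{4}F_i$, where $F_i$ is the $2$-face opposite the vertex $C_i(M)$. In barycentric coordinates $(\alpha_1,\alpha_2,\alpha_3,\alpha_4)$ relative to $\{C_i(M)\}_{i=1}^{4}$, the distance $d(p,F_i)$ is comparable to $\alpha_i$ up to a universal shape factor $c_0>0$, which is controlled by the angle and column-size bounds of Lemma \ref{column size}, Lemma \ref{fixed r}, and Proposition \ref{line seg}. Writing $N=N_1(A_{2k+3},r_{2k+4})N_2(A_{2k+4},r_{2k+5})$, the vertex $C_j(M')$ of $M'\Delta$ has barycentric coordinates
\[
\beta_{ij}=\frac{N_{ij}\,|C_i(M)|}{\sum_\ell N_{\ell j}\,|C_\ell(M)|},
\]
and for any $p=\sum_j s_j C_j(M')\in M'\Delta$ one has $\alpha_i(p)=\sum_j s_j\beta_{ij}\ge\min_j\beta_{ij}$. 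Hence $d(M'\Delta,\partial M\Delta)\ge c_0\min_{i,j}\beta_{ij}$.

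Next I would use the block structure of $N_1$ and $N_2$ to pin down which $\beta_{ij}$'s can become small. That structure forces $C_1(M'),C_2(M')$ close to $\spanD(C_1(M),C_2(M))$ and $C_3(M'),C_4(M')$ close to $\spanD(C_3(M),C_4(M))$, so the potentially small $\beta_{ij}$'s are exactly those detecting how close a $C_j(M')$ sits to an endpoint of its ``near'' short side of $M\Delta$. A direct computation, parallel to Lemma \ref{sl2 angle}, shows these $\beta_{ij}$'s are, up to universal constants, proportional to $d_{12}:=d(\spanD(C_1(M'),C_2(M')),\{C_1(M),C_2(M)\})$ in the $A_{2k+3}$-family, and to $d_{34}:=d(\spanD(C_3(M'),C_4(M')),\{C_3(M),C_4(M)\})$ in the $A_{2k+4}$-family. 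The pruning rule for $S_{k+1}$ removes the two extreme $A_{2k+3}$'s nearest either endpoint of $\spanD(C_1(M),C_2(M))$ and similarly for $A_{2k+4}$; consequently, for $M'\in S_{k+1}$ the relevant $\beta_{ij}$'s are bounded below by essentially one ``step size'' of the varying $A$-family, itself of order $d_{12}$ (respectively $d_{34}$). This gives $d(M'\Delta\cap S_{k+1},\partial M\Delta)\ge \tfrac{1}{10^7}d_{12}$ and $\ge\tfrac{1}{10^7}d_{34}$ simultaneously, whence the maximum bound.

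The main obstacle is the bookkeeping of multiplicative constants: each step introduces $\sqrt{2}$'s from the $L^1$--$L^2$ comparison, the $D'$ from Lemma \ref{sl2 angle}, the shape constant $c_0$, and Lipschitz factors from $\arcsin$. The constant $\tfrac{1}{10^7}$ is loose enough to absorb all of these \emph{uniformly in $k$}, a uniformity guaranteed by the angle and norm estimates of Proposition \ref{angle bound} and Lemma \ref{column size}.
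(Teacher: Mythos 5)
The proposal has the right high-level ingredient (the paper's proof sketch says ``the simplices are convex combinations,'' which your barycentric-coordinate reduction formalizes), but it is missing a crucial part of the geometry and consequently has a gap.

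Your step ``$d(p,F_i)$ is comparable to $\alpha_i$ up to a universal shape factor $c_0$'' is already misleading: in a simplex with vertices $C_1(M),\dots,C_4(M)$ we have $d(p,F_i)=\alpha_i(p)\,h_i$, where $h_i$ is the height from $C_i(M)$ to the opposite face. For the level-$k$ simplices of this construction, $h_1,h_2$ are comparable to the short-side length $|C_1(M)C_2(M)|$ and $h_3,h_4$ to $|C_3(M)C_4(M)|$, both of which decay like $10^{f(k)}$ with $f$ cubic (Proposition~\ref{angle bound}); there is no universal $c_0$, and the constant $1/10^7$ cannot absorb a factor that tends to zero. You do say $c_0$ is ``controlled'' by the earlier lemmas, but that acknowledgment undercuts the claim that $\min_{i,j}\beta_{ij}$ is what needs to be bounded below by a \emph{single} distance $d_{12}$ or $d_{34}$.

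More seriously, your sentence ``the potentially small $\beta_{ij}$'s are exactly those detecting how close a $C_j(M')$ sits to an endpoint of its near short side'' is false and is where the gap lies. The vertex $C_1(M')$ is pinned near the segment $[C_1(M),C_2(M)]$, so its barycentric coordinates $\beta_{3,1},\beta_{4,1}$ (the \emph{transverse} coordinates, measuring the distance from $C_1(M')$ to the faces $F_3,F_4$ meeting along the opposite short side) are also small --- and they are small regardless of where $C_1(M')$ sits along $[C_1(M),C_2(M)]$. Symmetrically, $\beta_{1,3},\beta_{2,3}$ for $C_3(M')$ are small. These transverse coordinates contribute to $d(M'\Delta,\partial M\Delta)$ via faces $F_1,F_2$ (whose intersection is exactly the short edge $[C_3(M),C_4(M)]$), and they are not addressed by the pruning of extreme $A$-families, which only separates segments from \emph{endpoints}, not from the edges themselves. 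Your argument therefore only controls half of the distance to $\partial M\Delta$. The paper's one-line proof explicitly invokes two other ingredients that you do not use: the level-$5$ deletion of a $10^{-5}$ neighborhood of the two short sides, and Lemma~\ref{each step}, which tracks how little the short sides move between levels. Those are precisely the tools that control the transverse direction, and a correct proof must engage with them.
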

 Here, we are measuring the separation of the line segments $$\spanD(C_i(M'),C_{i+1}(M')), i=1, 3$$ from the set of endpoints $\{C_i(M), C_{i+1}(M)\}$ of the previous line segments.
 %\begin{multline} d(M\Delta \cap S_k,M'\Delta \cap S_k)\geq \frac 1 {10^7} \max\{ d(\spanD(C_1(M),C_2(M)),\spanD(C_1(M'),C_2(M'))),\\
 % d(\spanD(C_3(M),C_4(M)),\spanD(C_3(M'),C_4(M')))\}.
 %\end{multline}

\begin{proof} This follows because we delete the neighborhood of the two sides, the simplices are convex combinations and Lemma \ref{each step}. 
\end{proof}

\begin{lem}$S_k$ satisfies the assumptions of Lemma \ref{to verify}.
\end{lem}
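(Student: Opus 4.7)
My plan is to combine the preceding distance lemma with the deletion construction from the start of the section, in order to produce the cubic polynomial $g$ required by Lemma \ref{to verify}. Since $S_{k+2}\subset S_{k+1}$ and $\partial S_k=\bigcup_{M\in S_k}\partial(M\Delta)$ (the level-$k$ simplices being pairwise disjoint), it suffices to show that $S_{k+1}\cap M\Delta$ is disjoint from a $10^{g(k)}$-neighborhood of $\partial(M\Delta)$ for every $M\in S_k$, with $g$ cubic of leading coefficient $-8/3=-b$. A fortiori this bound then transfers to $S_{k+2}$, matching the conclusion needed for Lemma \ref{to verify} with $r=2$.

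First I would apply the preceding displayed lemma to reduce the problem to a uniform lower bound on $d\bigl(\spanD(C_i(M'),C_{i+1}(M')),\{C_i(M),C_{i+1}(M)\}\bigr)$ for $i\in\{1,3\}$, where $M'=MN_1(A_{2k+3},r_{2k+4})N_2(A_{2k+4},r_{2k+5})$ ranges over the surviving children of $M$ in $S_{k+1}$. The deletion rule in item (2) of the construction of $S_{k+1}$ removes the two simplices containing the extremal values of $A_{2k+3}$ at each end of the parent short side $\spanD(C_1(M),C_2(M))$, and symmetrically for $A_{2k+4}$ on the short side $\spanD(C_3(M),C_4(M))$. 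So after deletion the surviving $M'$-short-sides all sit a controlled distance away from the extreme vertices of the parent short sides.

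Next I would quantify this minimum separation. By Proposition \ref{angle bound}, the parent short sides have length on the order of $10^{f(k)}$ for a cubic $f$ with leading coefficient $-8/3$. Restricting the counting argument in Proposition \ref{count} to a single $N_1$ (resp.\ $N_2$) factor gives on the order of $10^{\tilde{e}_1(k)}$ admissible values of $A_{2k+3}$ (resp.\ $A_{2k+4}$) for some quadratic $\tilde{e}_1$. Assuming these values parametrize positions approximately uniformly along the parent short side, the distance from the nearest surviving $M'$-span to the corresponding endpoint is at least of order $10^{f(k)-\tilde{e}_1(k)}$, and multiplication by the constant $1/10^7$ from the preceding lemma preserves the polynomial structure. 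Setting $g(k):=f(k)-\tilde{e}_1(k)-7$ yields a cubic polynomial with leading coefficient $-8/3$, as required by Lemma \ref{to verify}.

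The main obstacle is making the approximate-uniformity claim precise: as $A_{2k+3}$ varies through admissible $D$-balanced matrices ordered by their position on the parent short side, the point $\spanD(C_1(M'),C_2(M'))$ should shift along $\spanD(C_1(M),C_2(M))$ with consecutive gaps of size comparable to the average, so that excising the two extremal indices at each end truly leaves a buffer of the claimed order rather than leaving clustered survivors near a vertex. I would address this by a quantitative refinement of the column-addition calculations in Lemmas \ref{covering decay} and \ref{fixed r}: those lemmas already show that incrementing the entries of $A$ (and the exponent $r$) moves the relevant columns projectively in a monotone fashion, and the $D$-balanced hypothesis is precisely what rules out adjacent choices of $A$ collapsing onto the same projective direction.
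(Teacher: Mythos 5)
Your setup---reducing via the preceding displayed lemma to a lower bound on $d\bigl(\spanD(C_i(M'),C_{i+1}(M')),\{C_i(M),C_{i+1}(M)\}\bigr)$ for surviving children $M'$---matches the paper's proof. The divergence is in the final quantitative step. You reach for a counting/density argument (roughly $10^{\tilde e_1(k)}$ children spread over a parent side of length $\approx 10^{f(k)}$ gives average gap $10^{f(k)-\tilde e_1(k)}$), and you correctly flag that this requires an ``approximate-uniformity'' claim you have not established. That claim is unnecessary: you do not need the spacings to be comparable to their average, only a lower bound on the size of the chunk removed by a \emph{single} extremal choice, and Lemma \ref{fixed r} already supplies this directly. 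Indeed $\spanD\bigl(C_1(MN_2(A,r)),C_2(MN_2(A,r))\bigr)$ has diameter at least $d(C_1(M),C_2(M))/(D'|A|^2)$, and since the arcs for distinct admissible $A$ are disjoint inside $\spanD(C_1(M),C_2(M))$, deleting the two extremal choices of $A$ immediately forces every survivor to satisfy
$$d\bigl(\spanD(C_1(M'),C_2(M')),\{C_1(M),C_2(M)\}\bigr)\geq \frac{d(C_1(M),C_2(M))}{D'|A_{2k+4}|^{2}},$$
with the symmetric bound on the $C_3,C_4$ side. Since $|A_{2k+4}|^{2}=10^{O(k^2)}$ while $d(C_1(M),C_2(M))\approx 10^{f(k)}$ with $f$ cubic of leading coefficient $-8/3$, the resulting $g$ is cubic with leading coefficient $-8/3$, exactly as Lemma \ref{to verify} requires. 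Your proposed fix (a quantitative monotonicity refinement of Lemmas \ref{covering decay} and \ref{fixed r}) is pointing in the right direction, but it is more machinery than the problem needs---the pointwise estimate of Lemma \ref{fixed r}, applied once to the deleted extremal tile, already closes the gap.
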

\begin{proof}By the previous lemma it suffices to bound $$\frac 1 {10^7} \max\{ d(\spanD(C_1(M'),C_2(M')),\{C_1(M),C_2(M)\},  d(\spanD(C_3(M'),C_4(M')),\{C_3(M),C_4(M)\})\}.$$  By our choice of deleting  two $A_{2k+3}$ closest to the end points of $\spanD(C_1(M_k),C_2(M_k))$ and Lemma \ref{fixed r} we have that $$d(\spanD(C_1(M'),C_2(M')),\{C_1(M),C_2(M)\}\geq d(C_1(M),C_2(M))\frac 1 {D'|A_{2k+4}|^2}.$$\end{proof} %This follows from the previous lemma and Lemmas \ref{covering decay} and \ref{fixed r}.

\noindent Thus, we may invoke Proposition \ref{prop:abstract} have shown the \emph{lower bound}: 
\begin{cor}\label{cor:lbound}The set of non-uniquely ergodic 4-IETs $\nue(4321) \subset \Delta$ satisfies $$\Hdim(\nue(4321)) \geq \frac 5 2 .$$
\end{cor}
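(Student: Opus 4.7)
The plan is to package together all of the work of Sections \ref{sec:paths}--\ref{sec:we show} and the current section into a single application of Proposition \ref{prop:abstract}, by way of Veech's non-unique ergodicity criterion (Lemma \ref{not ue}).

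First I would argue that $\bigcap_{k\geq 5}S_k \subset \nue(4321)$. Every simplex in $S_k$ arises from a matrix $M_k$ of the form given in Section \ref{sec:paths}, and by Proposition \ref{line seg} the nested intersection $\bigcap_{k=1}^{\infty} M_k \Delta$ is a non-degenerate line segment of length at least $1/900$. By Lemma \ref{not ue}, every point of that line segment corresponds to an IET with exactly two ergodic invariant probability measures, and is in particular non-uniquely ergodic. Hence every point of $\bigcap_{k\geq 5}S_k$ lies in $\nue(4321)$.

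Next I would verify the three hypotheses of Proposition \ref{prop:abstract} for the sequence $\{S_k\}_{k\geq 5}$, with the explicit values $a=12$ and $b=\tfrac{8}{3}$. Condition (1) with $c=\tfrac{1}{900}$ is exactly Proposition \ref{line seg}. For Condition (2), Proposition \ref{count} shows that each $M_k$ extends in at least $10^{\tilde e(k+1)}$ ways to a descendant $M_{k+1}=M_k N_1(A_{2k+3},r_{2k+4})N_2(A_{2k+4},r_{2k+5})$, where $\tilde e$ is a quadratic polynomial with leading coefficient $12$; this provides the required $\rho\,10^{p(k)}$ many children, with the leading coefficient of $p$ equal to $a=12$. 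For Condition (3), I would invoke Lemma \ref{to verify}: the final lemma of the preceding section bounds the distance from $S_{k+2}$ to the boundary of a simplex of $S_k$ from below by
\[
\frac{1}{10^7}\cdot d\bigl(C_1(M),C_2(M)\bigr)\cdot \frac{1}{D'|A_{2k+4}|^2},
\]
and combining Proposition \ref{angle bound} (giving a cubic lower bound on $d(C_1(M),C_2(M))$ with leading coefficient $-\tfrac{8}{3}$) with the size bound $|A_{2k+4}|\in I_{2k+4}$ (contributing a quadratic in the exponent) yields a cubic $g(k)$ with leading coefficient $-\tfrac{8}{3}$, giving the required $b=\tfrac{8}{3}$.

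Finally, applying Proposition \ref{prop:abstract} with these values gives
\[
\Hdim\!\left(\bigcap_{k\geq 5}S_k\right) \;\geq\; 1 + \frac{a}{3b} \;=\; 1 + \frac{12}{8} \;=\; \frac{5}{2},
\]
and the first step identifies this set as a subset of $\nue(4321)$. The main bookkeeping obstacle is simply to confirm that the leading coefficients of the cubic polynomials arising from the angle estimates and the counting estimates line up correctly ($a=12$ from counting, $b=\tfrac{8}{3}$ from the angles), so that $\tfrac{a}{3b}=\tfrac{3}{2}$; every other ingredient is already packaged as a named lemma or proposition in the preceding sections.
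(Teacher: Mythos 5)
Your proposal is correct and matches the paper's approach precisely: the paper constructs $S_k$ and proves the lemmas of Sections \ref{sec:paths}--5 exactly so that it can invoke Proposition \ref{prop:abstract} with $a=12$ and $b=\tfrac{8}{3}$, together with Veech's criterion (Lemma \ref{not ue}) and Proposition \ref{line seg} to place $\bigcap_k S_k$ inside $\nue(4321)$. The only blemish is a stray reference to ``the final lemma of the preceding section'' (those lemmas live in the same section as the corollary), which does not affect the argument.
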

\section{Bounds for Hausdorff dimension}\label{sec:lower}

\subsection{Technical Lemmas} Before we prove the upper bound for Hausdorff dimension, we require some technical lemmas:

\subsubsection{Lines and measures} In this subsection, we collect some technical lemmas on lines and measures in $\Delta$ and $\hh(2)$. First, we state a proposition relating Masur-Veech measure on the stratum $\hh(2)$ to the measure class on the \emph{set of line segments} in the simplex $\Delta_4$, which we view as $\Delta_4 \times \R^4$, and endow it with the Lebesgue measure class $m_{\ell} = m_{\Delta} \times m$, where $m_{\Delta}$ is the Lebesgue measure class on $\Delta_4$, and $m$ is the Lebesgue measure on $\R^4$ . 

Recall from \S\ref{sec:background} we have the map $\T: \hh(2) \rightarrow \Delta_4$, associating the normalized return map to the transversal to the vertical flow on $\omega$. Let $U \subset \hh(2)$, and consider the set of lines $$L(U) = \{\{\T(h_s \om)\}_{s \in [0, \epsilon]}: \om \in U, \epsilon >0\}$$ in $\Delta_4$ associated to horocycle trajectories based in $U$. Let $\mu_{MV}$ denote Masur-Veech measure on the stratum $\hh(2)$. It has been a long standing problem to understand the image of $L$, that is, to understand the set of line segments in $\Delta_4$ which arise as projections of horocycle trajectories. This problem was solved in full generality by Minsky-Weiss~\cite[\S5]{MW}, who showed~\cite[Theorem 5.3]{MW}, that given $(\mathbf{a}, \mathbf{b}) \in \Delta_4 \times \R^4$, the line segment defined by $$\{\mathbf{a} + s \mathbf{b}: s\in [0, 1]\}$$ is in the image of $L$ if and only if a certain quadratic form $Q$ evaluated at $\mathbf{a, b}$ is positive, and thus there is an open set of such pairs. Moreover, they show that for each pair $$(\mathbf{a_0,b_0}) \mbox{ with } Q(\mathbf{a_0, b_0)}>0,$$ there is an open neighborhood $U'$ of $(\mathbf{a_0, b_0})$ and a local \emph{affine} inverse map to the map $L$. Thus, we have:

\begin{prop} \label{lines} Let $U \subset \hh(2)$ be such that $L(U)$ is open. Suppose $A \subset L(U)$ with $m_{\ell} (A)>0$. Then  $\mu_{MV}(L^{-1}(A)) >0$. In particular, if $\mu_{MV}(U)>0$, then $m_{\ell}(L(U)) >0$.
\end{prop}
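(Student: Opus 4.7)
The plan is to combine the Minsky--Weiss local affine inverse to $L$ with the fact that Masur--Veech measure $\mu_{MV}$ lies in the Lebesgue measure class on period coordinates on $\hh(2)$. For each $\omega \in U$, the pair $(\mathbf{a}, \mathbf{b}) \in \Delta_4 \times \R^4$ representing $L(\omega)$ satisfies $Q(\mathbf{a},\mathbf{b}) > 0$ (since $(\mathbf{a},\mathbf{b})$ arises from a horocycle trajectory), so by the cited \cite[Theorem 5.3]{MW} there is an open neighborhood $V$ of $(\mathbf{a},\mathbf{b})$ in $\Delta_4 \times \R^4$ together with an affine map $\phi \colon V \to \hh(2)$ which is a local inverse of $L$. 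In particular $\phi$ is absolutely continuous in both directions with respect to Lebesgue measure on its domain, and likewise for $L$ on $\phi(V)$.

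For the first assertion, I would cover $L(U)$ (which is open by hypothesis) by a countable collection $\{V_i\}$ of such neighborhoods, each equipped with an affine inverse $\phi_i\colon V_i \to \hh(2)$; this is possible by second countability. If $m_\ell(A) > 0$, countable additivity gives some index $i$ with $m_\ell(A \cap V_i) > 0$. Because $\phi_i$ is an affine isomorphism onto its image, $\phi_i(A \cap V_i)$ has positive Lebesgue measure in period coordinates on $\hh(2)$, and hence positive $\mu_{MV}$-measure. Since $\phi_i(A \cap V_i) \subset L^{-1}(A)$, this yields $\mu_{MV}(L^{-1}(A)) > 0$.

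For the ``in particular'' statement I would run the same countable covering argument in the opposite direction: cover $U$ by a countable family of open sets $\{W_j\}$ on each of which $L$ itself is an affine diffeomorphism onto an open subset of $L(U)$. If $\mu_{MV}(U) > 0$, some $W_j$ satisfies $\mu_{MV}(U \cap W_j) > 0$, hence $U \cap W_j$ has positive Lebesgue measure in period coordinates; applying the affine map $L|_{W_j}$ then produces a subset of $L(U)$ of positive $m_\ell$-measure.

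The main obstacle is not technical depth but rather the correct bookkeeping of \cite[Theorem 5.3]{MW}: one must verify that the local affine inverses patch together to cover all of $L(U)$, and that the identification of $\mu_{MV}$ with the Lebesgue class in period coordinates is transported correctly through the charts $\phi_i$ (i.e., the Jacobian of each $\phi_i$ with respect to period coordinates and the product Lebesgue measure on $\Delta_4 \times \R^4$ is nonzero and bounded on compact subsets). Once this setup is unpacked, the argument reduces to standard measure-theoretic manipulation with countable covers.
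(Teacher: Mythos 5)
Your argument is correct and matches what the paper has in mind: the paper states Proposition \ref{lines} immediately after recalling the Minsky--Weiss local affine inverse (``Thus, we have:''), treating it as a direct consequence, and your write-up is the natural unpacking of that assertion. In particular you correctly identify the two ingredients (that $\mu_{MV}$ is in the Lebesgue class in period coordinates, and that the local inverses $\phi_i$ supplied by \cite[Theorem 5.3]{MW} are affine and hence, being injective between spaces of equal dimension, have nonvanishing Jacobian), and the countable-cover/countable-subadditivity bookkeeping is exactly right, including the observation that $L\circ\phi_i=\mathrm{id}$ on $V_i$ gives $\phi_i(A\cap V_i)\subset L^{-1}(A)$. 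The ``in particular'' direction is indeed a separate (symmetric) statement rather than a formal corollary of the first, and you handle it correctly by running the cover on the $\hh(2)$ side.
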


\noindent Second, we require the following:

\begin{lem}\label{open lines} If $U \subset \Delta_4 \times \R^4$ is an open set in the space of lines then there exists $V$ an open subset of $ \Delta_4$  and an open set of directions $\Theta \subset \R^4$ so that for every $\theta \in \Theta$ and $v \in V$ the line $$L(v, \theta): = \{v + t\theta: t \in (0, 1)\}$$ in direction $\theta$ through $v$ is in $U$.
\end{lem}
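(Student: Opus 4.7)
The plan is to observe that the conclusion is essentially a tautological restatement of the fact that the product topology on $\Delta_4 \times \R^4$ has a basis of open rectangles, so any nonempty open subset contains, in a neighborhood of each of its points, a product of an open subset of $\Delta_4$ with an open subset of $\R^4$. The identification between $(v,\theta) \in \Delta_4 \times \R^4$ and the parametrized line segment $L(v,\theta) = \{v+t\theta : t \in (0,1)\}$ then translates this into the statement claimed.

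Concretely, I would argue as follows. We may assume $U$ is nonempty, since otherwise there is nothing to prove. Fix a point $(v_0, \theta_0) \in U$. Because $\Delta_4 \times \R^4$ carries the Euclidean topology (which coincides with the product of the Euclidean topologies on each factor), the openness of $U$ at $(v_0, \theta_0)$ produces some $\epsilon > 0$ such that the product rectangle
\[
\bigl(B(v_0,\epsilon) \cap \Delta_4\bigr) \times B(\theta_0, \epsilon) \subset U.
\]
Setting $V := B(v_0,\epsilon) \cap \Delta_4$ and $\Theta := B(\theta_0,\epsilon)$, both are open in their respective ambient spaces and nonempty.

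Finally, under the parametrization in which a line segment in the space of lines is encoded by the pair (basepoint, direction), $L(v,\theta)$ belongs to $U$ precisely when $(v,\theta) \in U$. Since $V \times \Theta \subset U$ by construction, we conclude that $L(v,\theta) \in U$ for every $v \in V$ and every $\theta \in \Theta$, which is the desired statement. The only thing to watch is the parametrization convention for the space of lines, but this was fixed in Proposition \ref{lines} (following Minsky--Weiss) and causes no trouble; there is no substantive obstacle beyond unwinding definitions.
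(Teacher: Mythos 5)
Your proof is correct, and it is simpler than the paper's. You observe that the space of lines is parametrized as $\Delta_4 \times \R^4$ with the product (Euclidean) topology, so openness of $U$ at a point $(v_0,\theta_0)$ immediately yields an open box $V \times \Theta \subset U$; unwinding the parametrization $L(v,\theta) \leftrightarrow (v,\theta)$ then finishes the argument. The paper instead runs a tube-lemma style argument: it fixes a point $v$ and a precompact open set of directions $\Theta$ with $\{v\} \times \bar\Theta \subset U$, uses openness of $U$ to get, for each $\theta \in \bar\Theta$, an $\epsilon_\theta$ with $B(v,\epsilon_\theta)\times\{\theta\}\subset U$, and then extracts a uniform $\epsilon$ by compactness of $\bar\Theta$, setting $V = B(v,\epsilon)$. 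This buys a slightly stronger statement (one may take $\Theta$ to be essentially any prescribed precompact direction set, not just a small ball), but since the lemma only asks for the existence of some $V$ and $\Theta$, your direct product-basis argument suffices and is cleaner. One tiny cosmetic point: the product of two $\epsilon$-balls is not itself a metric ball in $\Delta_4\times\R^4$, so strictly one should shrink $\epsilon$ (e.g. to $\epsilon/\sqrt 2$) to guarantee $V\times\Theta$ sits inside a given ball of $U$; this does not affect the argument in any substantive way.
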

\begin{proof}  Let $\Theta$ be a set of directions so that $\bar{\Theta}$ is compact and for every $\theta \in \bar{\Theta}$ we have $L(v, \theta)\in U$. 
For each $\theta \in \bar{\Theta}$ there exists $\epsilon_{\theta}>0$ so that $d(w,v)<\epsilon_{\theta}$ we have $L(w,\theta)\in U$.
By compactness there exists $\epsilon>0$ so for all $\theta \in \bar{\Theta}$ and $w$ with $d(w,v)<\epsilon$ we have 
$L(w, \theta) \in U$. Let $V=B(v,\epsilon)$.
\end{proof}

\subsubsection{Metric Geometry} We also require some more general technical lemmas on Hausdorff dimension: Let
$\mathcal{H}^t$ be the $t-$dimensional Hausdorff measure. 
Let $C_t$ be the Riesz $t$-capacity (see \cite[Definition 8.4]{mattila} for the definition). %\marginpar{Reference to Hausdorff content?}.
 Let $Gras(n,n-m)$ denote the Grasmannian on $n-m$ planes in $n$ space. Let $\gamma_{n,n-m}$ denote the natural measure class on $Gras(n,n-m)$. If $W \in Gras(n,n-m)$ and $a \in W^{\perp}$ let $W_a$ be the translate of $W$ by $a$.
\begin{thm}(\cite[Theorem 10.8]{mattila})\label{mattila} Let $m\leq t\leq n$ and $A \subset \mathbb{R}^n$ with $C_t(A)>0$ then for $\gamma_{n,n-m}$ almost every $W \in Grass(n,n-m)$ 
$$\mathcal{H}^m(\{a\in W^{\perp}:C_{t-m}(A\cap W_a)>0\}).$$
\end{thm}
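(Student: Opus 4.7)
The plan is to prove this slicing theorem via the standard Frostman duality between capacity and finite-energy measures, followed by a Fubini argument integrated over the Grassmannian. First, since $C_t(A)>0$, by the dual characterization of Riesz $t$-capacity we may select a compactly supported Borel probability measure $\mu$ on $A$ with finite $t$-energy
\[
I_t(\mu) = \int\!\!\int |x-y|^{-t}\, d\mu(x)\, d\mu(y) < \infty.
\]
For each $W \in Gras(n,n-m)$, let $\pi_W : \mathbb{R}^n \to W^\perp$ denote orthogonal projection, and disintegrate $\mu = \int_{W^\perp} \mu_{W,a}\, d(\pi_W)_*\mu(a)$, where each conditional measure $\mu_{W,a}$ is carried by the slice $W_a \cap \mathrm{supp}(\mu) \subset W_a \cap A$.

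The heart of the argument is the double integral
\[
J := \int_{Gras(n,n-m)} \int_{W^\perp} I_{t-m}(\mu_{W,a})\, d\mathcal{H}^m(a)\, d\gamma_{n,n-m}(W).
\]
Using the disintegration, an application of Fubini rewrites the inner double integral as an integral of $|x-y|_{W_a}^{-(t-m)}$ against $\mu \otimes \mu$, where $|x-y|_{W_a}$ denotes the distance within a common slice. Swapping orders, the key geometric estimate is the classical Grassmannian identity: for fixed $x\neq y$,
\[
\int_{Gras(n,n-m)} \mathbf{1}_{W_a}\!(x,y)\, |x-y|^{-(t-m)}\, d\gamma(W) \;\lesssim\; |x-y|^{-t},
\]
which one obtains by parametrizing the subspaces containing the line through $x,y$ and using rotation invariance of $\gamma$. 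Combined, these give $J \lesssim I_t(\mu) < \infty$.

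From $J < \infty$ and Fubini it follows that for $\gamma$-a.e.\ $W$, the inner integral $\int_{W^\perp} I_{t-m}(\mu_{W,a})\, d\mathcal{H}^m(a)$ is finite, so for such $W$, $I_{t-m}(\mu_{W,a}) < \infty$ for $\mathcal{H}^m$-a.e.\ $a$ in the support of $(\pi_W)_*\mu$. Each such $\mu_{W,a}$ is therefore a nonzero measure of finite $(t-m)$-energy supported on $A \cap W_a$, which witnesses $C_{t-m}(A\cap W_a) > 0$. Finally, $(\pi_W)_*\mu$ is a nontrivial probability measure on $W^\perp$ supported on a set of Hausdorff dimension at most $m$, and since a finite $\mathcal{H}^m$-null set cannot carry positive $(\pi_W)_*\mu$-mass on the "good" $a$'s (by absolute-continuity type arguments for projected finite-energy measures), the set of admissible slicing parameters has positive $\mathcal{H}^m$-measure, giving the conclusion.

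The main obstacle is the Grassmannian integration estimate: it requires choosing the right coordinates on $Gras(n,n-m)$ (typically parametrizing through the angle the $m$-dimensional orthogonal complement makes with the segment from $x$ to $y$) and carefully tracking that the singularity of $|x-y|_{W_a}^{-(t-m)}$ integrates against the Jacobian of this parametrization to yield exactly the $|x-y|^{-t}$ bound — the exponent balancing is what forces the hypothesis $m \leq t$. A secondary subtlety is justifying the disintegration so that $\mu_{W,a}$ is genuinely supported on the slice (which requires a version of Fubini for Hausdorff measure on affine fibers), but this is standard once $\mu$ is compactly supported.
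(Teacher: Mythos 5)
This statement is not proved in the paper at all: it is quoted verbatim as \cite[Theorem 10.8]{mattila}, so there is no in-paper argument to compare your sketch against. (Note also that the displayed expression in the paper's statement is truncated --- the intended conclusion is $\mathcal{H}^m(\{a\in W^{\perp}:C_{t-m}(A\cap W_a)>0\})>0$.) Your proposal reproduces the standard Marstrand--Mattila slicing argument, which is indeed how Mattila proves the result in his book, so at the level of strategy you have the right proof.

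That said, two places in your sketch gloss over the parts that carry the real weight. First, the disintegration. You write $\mu=\int_{W^\perp}\mu_{W,a}\,d(\pi_W)_*\mu(a)$, but for the energy inequality and for the final positive-$\mathcal{H}^m$-measure conclusion you want the \emph{sliced measures} of Mattila, which are normalized against $\mathcal{H}^m$ on $W^\perp$ (i.e.\ $\mu=\int_{W^\perp}\mu_{W,a}\,d\mathcal{H}^m(a)$) and are constructed as weak limits of $(2\delta)^{-m}\mu$ restricted to $\delta$-slabs around $W_a$; a generic Rokhlin disintegration against $(\pi_W)_*\mu$ differs from this by a Radon--Nikodym density, and for $t=m$ one must actually establish that $(\pi_W)_*\mu\ll\mathcal{H}^m$ for a.e.\ $W$ before the two notions coincide. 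Your closing sentence about ``absolute-continuity type arguments for projected finite-energy measures'' is precisely where this is needed and should be made explicit, since it is what upgrades ``positive $(\pi_W)_*\mu$-measure'' to ``positive $\mathcal{H}^m$-measure.''

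Second, the Grassmannian kernel estimate as written does not typecheck: for a fixed $W$ and a fixed pair $x\neq y$, the event that both $x$ and $y$ lie in the \emph{same} slice $W_a$ is the condition $x-y\in W$, which is $\gamma_{n,n-m}$-null unless one simultaneously integrates in $a$. The correct formulation (Mattila's Theorem 10.7) bounds the double integral $\int_{G(n,n-m)}\int_{W^\perp} I_{t-m}(\mu_{W,a})\,d\mathcal{H}^m(a)\,d\gamma(W)$ by $c\, I_t(\mu)$, and the pointwise geometric input is an estimate for $\int_{G(n,n-m)}\phi_\delta(\pi_W(x-y))\,d\gamma(W)$ (with $\phi_\delta$ the $\delta$-slab normalization) that, after taking $\delta\to 0$, supplies the $|x-y|^{-m}$ gain converting $|x-y|^{-(t-m)}$ into $|x-y|^{-t}$. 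Your heuristic $\lesssim|x-y|^{-t}$ is the right answer, but as stated the left side is $0$ for a.e.\ $W$, so the estimate has to be formulated on the slab approximants (or directly on the energy double integral) before it makes sense. Once those two points are made precise, your argument matches the cited proof.
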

By \cite[Theorem 8.9 (3)]{mattila}, for Borel sets, capacity dimension and Hausdorff dimension are the same so we obtain the following corollary:
\begin{cor} If $A \subset \mathbb{R}^n$, $\Hdim(A)\geq t>n-1$ a positive measure set of lines in $\mathbb{R}^n$ intersect $A$ in a set with Hausdorff dimension at least $t$.
\end{cor}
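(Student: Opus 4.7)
The plan is to apply Mattila's Theorem~\ref{mattila} with the slicing codimension $m = n-1$, so that $\mathrm{Grass}(n, n-m) = \mathrm{Grass}(n, 1)$ parametrizes line directions and, for each $W \in \mathrm{Grass}(n,1)$, the affine translates $\{W_a\}_{a \in W^{\perp}}$ are precisely the lines in $\mathbb{R}^n$ parallel to $W$. The natural measure on the space of affine lines is the product of the Haar measure on $\mathrm{Grass}(n,1)$ and $(n-1)$-dimensional Lebesgue measure on $W^{\perp}$, which is exactly the measure class produced by Fubini on the indexing set in Theorem~\ref{mattila}; so "positive measure set of lines" is read through this product structure.

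The first step is to trade the Hausdorff dimension hypothesis $\Hdim(A) \geq t$ for the capacity hypothesis needed by Theorem~\ref{mattila}. By the equivalence between Hausdorff and capacity dimension on Borel sets (\cite[Theorem 8.9(3)]{mattila}), for every $s < t$ we have $C_s(A) > 0$. Since $t > n-1$, I can pick any $s$ with $n - 1 < s < t$.

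Next I apply Theorem~\ref{mattila} with this $s$ and with $m = n-1$. It gives that for $\gamma_{n,1}$-almost every direction $W$, the set
\[
E_W := \{\, a \in W^{\perp} : C_{s-(n-1)}(A \cap W_a) > 0 \,\}
\]
has positive $\mathcal{H}^{n-1}$-measure. Fubini in $\mathrm{Grass}(n,1) \times W^{\perp}$ then yields that a positive measure set of affine lines $W_a$ meets $A$ in a set of positive $C_{s-(n-1)}$-capacity, so by \cite[Theorem 8.9(3)]{mattila} in a set of Hausdorff dimension at least $s - (n-1)$. Taking a countable sequence $s_j \uparrow t$ and intersecting the full-measure sets of good directions over $j$ (still full measure), one obtains a positive measure set of lines whose intersection with $A$ has Hausdorff dimension at least $t - (n-1)$.

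The only real subtlety is bookkeeping: carrying the ``space of lines'' viewpoint through the direction-plus-translate decomposition, and ensuring the $s \uparrow t$ limit preserves the positive-measure conclusion (handled by countability). Note that the output of this argument, dimension at least $t - (n-1) > 0$, is what is actually used downstream: the application takes place in $\Delta_4$ (so $n = 3$) with $t = 5/2$, giving line-intersection dimension $\geq 1/2$, exactly the bound invoked for Theorem~\ref{thm:main}. The ``$t$'' appearing in the conclusion of the corollary therefore appears to be a typographical slip for $t - (n-1)$, which is what the Mattila slicing theorem yields and what the rest of the paper uses.
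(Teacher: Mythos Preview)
Your approach is exactly the paper's: the corollary is deduced from Theorem~\ref{mattila} together with \cite[Theorem 8.9(3)]{mattila} (capacity dimension $=$ Hausdorff dimension for Borel sets), and you are right that the ``$t$'' in the conclusion is a slip for $t-(n-1)$, which is indeed what is used in Proposition~\ref{upper}.

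One small point about your limiting step: intersecting the full-measure sets of good \emph{directions} over $s_j\uparrow t$ is fine, but for a fixed good direction $W$ the sets $E_{W,j}=\{a:C_{s_j-(n-1)}(A\cap W_a)>0\}$ are a decreasing sequence of sets each of positive $\mathcal H^{n-1}$-measure, and that alone does not force $\bigcap_j E_{W,j}$ to have positive measure. So the passage from ``for each $s<t$ a positive-measure set of lines'' to ``a positive-measure set of lines with slice dimension $\ge t-(n-1)$'' is not justified as written. This does not matter for the paper: in Proposition~\ref{upper} the hypothesis is the strict inequality $\Hdim(\nue(4321))>3-c$, so one takes $t=3-c$, gets $C_t(A)>0$ directly, and applies Theorem~\ref{mattila} once with no limit needed. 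The paper's own one-line derivation glosses over the same point.
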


\begin{lem}\label{open full dim} Let $V$ be a non-empty open subset of $\Delta_4$ and assume that the Hausdorff dimension of the set $\nue(4321)$ of not uniquely ergodic 4-IETs is at least $3-c$ as a subset of $\Delta_4$. Then $$\Hdim(\nue(4321)\cap V)\geq3-c .$$
\end{lem}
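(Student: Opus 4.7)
The plan is to exploit the self-similar structure of $\nue(4321)$ under Rauzy induction to embed a full-dimensional copy of itself into $V$. If $M = M(T,k)$ is a Rauzy matrix corresponding to a path that is a \emph{loop} at $(4321)$ (so that $R^k(T)$ again has permutation $(4321)$), then the projective map $\varphi_M \colon \Delta_4 \to M\Delta$ given by $\varphi_M(\lambda) = M\lambda/|M\lambda|$ is a smooth injection onto the cylinder $M\Delta$. Moreover, $T_{\varphi_M(\lambda),(4321)}$ is a preimage of $T_{\lambda,(4321)}$ under $R^k$, and since Rauzy induction is the first-return map to a subinterval, the invariant measures of $T_{\varphi_M(\lambda),(4321)}$ and $T_{\lambda,(4321)}$ are in canonical bijection (the same correspondence used in Lemma~\ref{not ue}). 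Consequently, $\varphi_M$ restricts to a bijection between $\nue(4321)$ and $\nue(4321)\cap M\Delta$.

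First I would locate a loop matrix $M$ at $(4321)$ with $M\Delta \subset V$. Pick any $\lambda_0 \in V$ for which $T_{\lambda_0,(4321)}$ satisfies Keane's property; since the Rauzy graph of $(4321)$ is strongly connected (Figure~\ref{fig:rauzy}), the orbit of $T_{\lambda_0,(4321)}$ under Rauzy induction returns to permutation $(4321)$ infinitely often. The cylinders $M(T_{\lambda_0,(4321)},k)\Delta$ at these return times $k$ all contain $\lambda_0$ and shrink to $\{\lambda_0\}$ as $k\to\infty$ (this is the standard cylinder-diameter estimate). For $k$ sufficiently large, the resulting loop matrix $M$ therefore satisfies $M\Delta \subset V$.

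Finally, $\varphi_M$ is a smooth projective map onto a subsimplex sitting in a compact piece of the interior of $\Delta_4$, so it is bi-Lipschitz on compact subsets of the interior of its domain. By $\sigma$-stability of Hausdorff dimension, for every $\epsilon>0$ there exists a compact set $K$ contained in the interior of $\Delta_4$ with $\Hdim(\nue(4321)\cap K) \geq 3-c-\epsilon$. Bi-Lipschitz invariance of Hausdorff dimension, together with the inclusion $\varphi_M(\nue(4321)\cap K) \subset \nue(4321)\cap M\Delta \subset \nue(4321)\cap V$, then yields
\[
\Hdim(\nue(4321)\cap V) \;\geq\; \Hdim\!\left(\varphi_M(\nue(4321)\cap K)\right) \;=\; \Hdim(\nue(4321)\cap K) \;\geq\; 3-c-\epsilon,
\]
and letting $\epsilon\to 0$ gives the desired bound. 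The only delicate step is the first one, namely producing an arbitrarily small loop cylinder at $(4321)$ sitting inside $V$; this relies on recurrence of Rauzy induction to $(4321)$ together with the fact that cylinders shrink to points along any Keane orbit.
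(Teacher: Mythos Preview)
Your approach is essentially the paper's: find a Rauzy loop matrix $M$ at $(4321)$ with $M\Delta\subset V$, observe that the projective map $\varphi_M$ carries $\nue(4321)$ into $\nue(4321)\cap V$, and use bi-Lipschitzness of $\varphi_M$ to transfer the dimension bound. Two comments.

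First, there is a small but real slip in your first step. Keane's condition alone does \emph{not} force the Rauzy cylinders $M(T_{\lambda_0},k)\Delta$ to shrink to $\{\lambda_0\}$: by Lemma~\ref{not ue}, if $T_{\lambda_0}$ has $r\ge 2$ ergodic measures then $\bigcap_k M(T_{\lambda_0},k)\Delta$ is an $(r-1)$-simplex, so the diameters stay bounded away from $0$. The fix is immediate---choose $\lambda_0\in V$ \emph{uniquely ergodic} (such points are dense, indeed of full Lebesgue measure in $\Delta_4$), and then the cylinders do shrink to $\{\lambda_0\}$, giving $M\Delta\subset V$ for large enough return times.

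Second, your $\epsilon$/compact-$K$ maneuver is unnecessary. The map $\varphi_M$ is bi-Lipschitz on the \emph{closed} simplex $\Delta_4$: since $|M\lambda|=\sum_j \lambda_j\,|C_j(M)|\ge \min_j|C_j(M)|>0$ on $\Delta_4$, the forward map is smooth on a compact domain, and the inverse $\mu\mapsto M^{-1}\mu/|M^{-1}\mu|$ is likewise smooth on the compact image $M\Delta$. The paper simply asserts that $M$ is bi-Lipschitz and concludes $\Hdim(\varphi_M(\nue(4321)))=\Hdim(\nue(4321))$ in one line.
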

\begin{proof} There exists a matrix of Rauzy induction $M$ so that $M\Delta\subset V$ (pick a matrix so that the subsimplex $M\Delta$ is contained in a ball inside $V$). Being non-uniquely ergodic is Rauzy induction invariant so $M(\nue(4321)) \subset \nue(4321)$. $M$ is a bilipshitz map. (Note that the various $M$ are not uniformly bilipshitz but each individual one is bilipshitz.) Sincee $M(\nue(4321)) \subset \left(\nue(4321) \cap V\right)$, we have  $$\Hdim(\nue(4321)\cap V) \geq \Hdim (M(\nue(4321)))=\Hdim(\nue(4321))\geq 3-c.$$
\end{proof}

%
%%We first record a lemma on the size of the image of the map $L$, namely that the image of an open neighborhood $U$ covers most of the tangent space in $\Delta_4$ at points in the image of $T(U)$.
%%
%%\begin{lemma}\label{image} Let $U \subset \hh(2)$ be open, $\omega_0 \in U$ be such that the vertical foliation is uniquely ergodic. Then for all uniquely ergodic $\lambda \in \Delta_4$ sufficiently close to $\T(\omega_0)$ there exists a uniquely ergodic $\omega \in U$, $\epsilon >0$ such that $$\T(\omega') = \T(\omega_0)$$ and $$\T(h_{\epsilon} \omega') = \lambda.$$ \end{lemma}
%%
%%\begin{proof} By decomposing the surface $\omega_0$ into zippered rectangles, we can deform the heights of the rectangles by adding an  associated to $T(\omega_0)$ to produce a surface $\omega'$. Then, by adding a small $\epsilon$-multiple of the heights to the lengths of the intervals of $T(\omega_0)$, we can produce the interval exchange with lengths $\lambda$.
%%\end{proof}
%

%
%%\begin{lem}\label{open lines}Let $U$ be an open subset in the space of lines. Then there exists a non-empty open set $U'$ in the space of lines and a non-empty open set $V$ in $\Delta_3$ so that if $L \in U'$, $c \in \Delta_3$ and $L+c\cap V \neq \emptyset$ then $L+c\in U$.
%%\end{lem}
%
%%\begin{proof} We can take an open subset $U' \subset U$ with $\mu_{MV}(U') >0$. By the above discussion, $L(U')$ is an open subset of $\Delta_4 \times \Delta_4$, so we have that the $m_{\ell}(L(U'))>0$.
%%\end{proof}
\subsection{Upper Bound} In this section we prove:
\begin{thm} \label{thm:smaller} The set of minimal and not uniquely ergodic 4-IETs $\nue(4321) \subset \Delta_4$ has Hausdorff dimension at most $\frac 5 2 $, $$\Hdim(\nue(4321)) \le \frac 5 2.$$
\end{thm}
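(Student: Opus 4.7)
The plan is a proof by contrapositive, reducing the IET bound to Masur's almost-sure bound $\Hdim(\nue(\om))\le 1/2$ for $\mu_{MV}$-a.e.\ $\om\in\hh(2)$ via \cite{Mhdim}. Suppose for contradiction that $\Hdim(\nue(4321)) > 5/2$.

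First, since $\Delta_4$ sits in a $3$-dimensional affine hyperplane of $\R^4$, the corollary to Mattila's Theorem \ref{mattila} produces a set $\mathcal{L}$ of affine lines in $\Delta_4$ of positive measure (in the parameter space $\Delta_4\times\R^4$) with the property that $\Hdim(\ell\cap\nue(4321))>1/2$ for every $\ell\in\mathcal{L}$. Because $\nue(4321)$ is invariant under Rauzy induction, Lemma \ref{open full dim} together with the bilipschitz action of a Rauzy matrix on $\Delta_4$ lets us replace $\mathcal{L}$ by its image in any prescribed open subset of the space of lines; in particular we may place $\mathcal{L}$ inside the Minsky--Weiss admissible open set $L(U)$ for some open $U\subset\hh(2)$.

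Next, by Proposition \ref{lines}, $\mu_{MV}(L^{-1}(\mathcal{L}))>0$. For each $\om\in L^{-1}(\mathcal{L})$, the horocycle arc $\{\T(h_s\om)\}_{s\in[0,\epsilon]}$ is one of the lines in $\mathcal{L}$, so the set of parameters $s$ with $T_{\T(h_s\om),(4321)}$ non-uniquely ergodic has Hausdorff dimension exceeding $1/2$. Because $T_{\T(h_s\om),(4321)}$ is a first-return map of the vertical flow of $h_s\om$ to a transversal, these are exactly the parameters $s$ for which the vertical foliation of $h_s\om$ is non-uniquely ergodic. The vertical direction on $h_s\om$ corresponds via the $SL_2(\R)$-action to a direction $\theta(s)$ on $\om$, and $s\mapsto\theta(s)$ is a smooth diffeomorphism on the relevant arc, hence locally bilipschitz. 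Consequently $\Hdim(\nue(\om))>1/2$ on a set of positive $\mu_{MV}$-measure in $\hh(2)$, contradicting Masur's bound.

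The main obstacle is the uniformity required in the last paragraph: establishing that the reparametrization $s\mapsto\theta(s)$ admits bilipschitz constants depending only on a neighborhood of $\om$, and checking that the identification of first-return IETs at $\T(h_s\om)$ with the vertical foliation of $h_s\om$ does not lose Hausdorff dimension on the transversal failure set of $\T$ (a codimension-one set in $\hh(2)$, hence negligible for our purposes). Both are standard consequences of the zippered-rectangle construction for $\hh(2)$ and the explicit matrix form of the horocycle flow, and once they are in place the contradiction with Masur's theorem yields the desired upper bound $\Hdim(\nue(4321))\le 5/2$.
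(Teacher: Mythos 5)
Your proposal is correct and follows essentially the same route as the paper. The paper proves Proposition~\ref{upper} (that $\Hdim(\nue(4321))>3-c$ forces $\Hdim(\nue(\om))\geq 1-c$ for a.e.\ $\om$, via Mattila slicing, Lemma~\ref{open full dim}, Lemma~\ref{open lines}, Proposition~\ref{lines}, and $SL_2(\R)$ ergodicity) and then feeds in Masur's bound $\Hdim(\nue(\om))\le 1/2$; you run the same chain as a direct contradiction from a positive-measure set, which is logically equivalent, and you are slightly more explicit than the paper about the bilipschitz reparametrization from horocycle time $s$ to direction $\theta(s)$ and about the negligibility of the transversal-failure locus of $\T$.
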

\noindent This result and Corollary~\ref{cor:lbound} establishes Theorem \ref{thm:iet}. We first note:
\begin{prop}\label{upper} Let  $c<1$ and suppose $h = \Hdim(\nue(4321)) > 3-c$. Then for  $\mu_{MV}$ almost every abelian differential $\omega$, $$\Hdim( \nue(\om)) \geq 1-c .$$\end{prop}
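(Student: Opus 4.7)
The plan is to combine Mattila's slicing theorem (Theorem~\ref{mattila}) with the Minsky--Weiss parameterization of horocycle trajectories as lines in $\Delta_4$ (Proposition~\ref{lines}), and then upgrade from positive $\mu_{MV}$-measure to full $\mu_{MV}$-measure using $SL_2(\mathbb{R})$-ergodicity.

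First I would fix an open set $U \subset \hh(2)$ with $\mu_{MV}(U)>0$ on which the Minsky--Weiss affine local inverse applies, so that $L(U)$ is open in $\Delta_4 \times \R^4$ and hence $m_\ell(L(U))>0$ by the second part of Proposition~\ref{lines}. Lemma~\ref{open lines} then furnishes a product box $V \times \Theta \subset L(U)$ with $V \subset \Delta_4$ and $\Theta \subset \R^4$ both open. Since $c<1$, we have $3-c > 2 = \dim(\Delta_4)-1$, so Lemma~\ref{open full dim} (applied to the global assumption $\Hdim(\nue(4321))>3-c$) gives $\Hdim(\nue(4321)\cap V) \geq 3-c$. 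I would then apply the corollary of Theorem~\ref{mattila} with $n=3$, $m=2$, $t=3-c$, restricted to line directions in $\Theta$ (which has positive $\gamma_{3,1}$-measure). This produces a set $G \subset V \times \Theta$ of lines with $m_\ell(G)>0$ such that every $\ell \in G$ satisfies $\Hdim(\ell \cap \nue(4321)) \geq (3-c)-2 = 1-c$.

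By Proposition~\ref{lines}, $\mu_{MV}(L^{-1}(G))>0$. For each $\om \in L^{-1}(G)$, the associated line $L(\om)$ meets $\nue(4321)$ in Hausdorff dimension at least $1-c$. The Minsky--Weiss affine inverse provides a locally bilipschitz parameterization of this line by the corresponding 1-parameter family of directions on $\om$, so pulling back yields $\Hdim(\nue(\om)) \geq 1-c$ on the positive-$\mu_{MV}$-measure set $L^{-1}(G)$.

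To boost to full measure, I would use that the function $\om \mapsto \Hdim(\nue(\om))$ is $SL_2(\mathbb{R})$-invariant: rotations $r_{\theta_0}$ translate $\nue(\om)$ by $-\theta_0$, while $g_t$ induces a smooth (M\"obius-type) diffeomorphism of the circle of directions, which preserves Hausdorff dimension. Because the $SL_2(\mathbb{R})$-action is ergodic on the (connected) stratum $\hh(2)$, this invariant function must be $\mu_{MV}$-almost surely constant; being at least $1-c$ on a set of positive measure, it is at least $1-c$ almost everywhere. The main technical obstacle is Step~2: arranging Mattila's slicing theorem so that the extracted lines genuinely sit inside $L(U)$ rather than merely in $\Delta_4 \times \R^4$; this is precisely what the combination of Lemma~\ref{open lines}, Lemma~\ref{open full dim}, and Proposition~\ref{lines} is designed to accomplish.
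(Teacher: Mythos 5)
Your proposal follows essentially the same route as the paper: fix an open set of Minsky--Weiss lines, extract a product box $V\times\Theta$ via Lemma~\ref{open lines}, use Lemma~\ref{open full dim} to localize the dimension lower bound to $V$, apply Mattila's slicing theorem to get a positive-$m_\ell$-measure set of lines meeting $\nue(4321)$ in dimension $\geq 1-c$, pull back to a positive-$\mu_{MV}$-measure set of translation surfaces via Proposition~\ref{lines}, and finish with $SL_2(\mathbb{R})$-invariance of $\om\mapsto\Hdim(\nue(\om))$ plus ergodicity. Your write-up is in fact somewhat more careful than the paper's terse original---you correctly spell out the dimension count $(3-c)-2=1-c$, note that $c<1$ is exactly what makes $3-c$ exceed $\dim\Delta_4-1=2$ so the slicing corollary applies, and explain why the bilipschitz relationship between horocycle time and angle transfers the Hausdorff dimension lower bound from the IET slice to $\nue(\om)$; these details are implicit or glossed in the published proof.
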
 
\noindent
\begin{proof}[Proof of Proposition \ref{upper}] Let $U$ be an open set in the space of lines given by Minsky-Weiss \cite[Theorem 5.3]{MW}. Let $\Theta$ and $V$ be given by Lemma \ref{open lines}. By Theorem \ref{mattila} and Lemma \ref{open full dim} almost every element of $\{(v:\theta):v\in V, \theta \in \Theta\}$ has a translate that intersects $V\cap NUE$ in Hausdorff dimension at least $1-c$. By our assumption (Lemma \ref{open lines}) these lines are in $U$. The set of flat surfaces they come from has positive $\mu_{MV}$ measure.  By $SL_2(\mathbb{R})$-invariance of Hausdorff dimension of non-uniquely ergodic directions and the ergodicity of $\mu_{MV}$ with respect to the $SL_2(\mathbb{R})$ action, this set must have full measure. 
\end{proof}
%Consider the set of lines in $\Delta_3$. By Theorem \ref{mattila} a positive measure set of these intersect NUE in Hausdorff dimension $\alpha$. By Proposition~\ref{lines} these come from a positive measure set of flat surfaces

\noindent Recall Masur's upper bound:
\begin{thm}\label{thm:mbound}~\cite[Main Theorem]{Mhdim} For every abelian differential $\omega$, the Hausdorff dimension of the set of not uniquely ergodic directions is at most $\frac 1 2$.
\end{thm}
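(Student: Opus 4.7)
The plan is to prove this upper bound via a Diophantine-approximation covering argument, using the Masur criterion for unique ergodicity as the geometric input. The starting observation is that any $\theta \in \nue(\omega)$ forces the Teichm\"uller geodesic $t \mapsto g_t r_\theta \omega$ to eventually leave every compact subset of $\hh(\alpha)$. By Mumford's compactness criterion applied in the stratum, leaving compacta forces the existence of arbitrarily short saddle connections along the trajectory. Unpacking how $g_t r_\theta$ acts on the holonomy of a saddle connection $\gamma$ on $\omega$ with holonomy $(x_\gamma, y_\gamma)$, this translates into a Diophantine inequality: for all large $t$ there exists $\gamma$ with $|x_\gamma \cos\theta + y_\gamma \sin\theta| \le \varepsilon e^{-t}$ and $|\gamma|_\omega \le Ce^t$. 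Equivalently, $\theta$ lies in an arc of angular width $\lesssim e^{-t}/|\gamma|_\omega$ about the direction $\theta_\gamma$ of $\gamma$. Thus $\nue(\omega)$ is contained in a $\limsup$ of such arcs, indexed by saddle connections.

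Next I would invoke the Eskin--Masur quadratic growth bound $\#\{\gamma : |\gamma|_\omega \le R\} = O(R^2)$ to turn this into a covering estimate. For each scale $t$ one obtains a cover of $\nue(\omega)$ by $O(e^{2t})$ arcs whose sizes are controlled by $e^{-t}/|\gamma|_\omega$, producing the formal estimate
\[
H^s(\nue(\omega)) \;\le\; \liminf_{t\to\infty} \sum_{\gamma\,:\,|\gamma|_\omega \le e^t} \left(\frac{e^{-t}}{|\gamma|_\omega}\right)^{s}.
\]
The proof is then a matter of showing the right-hand side vanishes for every $s > 1/2$ and applying the standard $\limsup$ dimension estimate (an analog of Borel--Cantelli or Jarn\'ik--Besicovitch).

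The main obstacle, and the genuinely delicate point, is that a naive substitution of the Eskin--Masur count into the above does \emph{not} yield the sharp $1/2$; it gives a strictly weaker exponent. Masur's key refinement is to observe that the saddle connections which approximate a single NUE direction are far from arbitrary: they form a nested, hierarchical family whose combinatorics is controlled by the Rauzy--Veech tree structure (analogous to convergents in a continued-fraction expansion). The effective number of such ``coherent towers'' at scale $R$ grows only linearly rather than quadratically in $R$, and when this refined count is fed into the covering sum it converges exactly for $s > 1/2$. Establishing this combinatorial/geometric hierarchy -- essentially, showing that successive cusp excursions of a divergent Teichm\"uller trajectory are organized into a sparse tree rather than being independent -- is the technical heart of the argument and is where I would expect to spend the most effort.
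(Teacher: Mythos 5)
The paper does not prove Theorem \ref{thm:mbound}: it is quoted verbatim as the Main Theorem of Masur~\cite{Mhdim}, and the authors invoke it as an external input for the upper bound on the Masur--Smillie constant. So there is no proof in the paper against which to compare your sketch; what you should be comparing against is Masur's 1992 argument itself.

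With that in mind, your outline does capture the standard opening moves: non-unique ergodicity forces divergence of the Teichm\"uller geodesic $t\mapsto g_t r_\theta\omega$ (Masur's criterion), divergence at time $t$ yields a saddle connection $\gamma$ with a small holonomy component, and hence an arc of width roughly $e^{-t}/|\gamma|_\omega$ containing $\theta$; and you correctly observe that feeding the quadratic count of saddle connections naively into the $s$-dimensional Hausdorff sum gives convergence only for $s>1$, not $s>1/2$, so a refinement is essential. The gap is that you name but do not supply that refinement, and the label you attach to it is off target. Masur's paper is written entirely in the language of quadratic differentials, measured foliations, and extremal length; it does not use Rauzy--Veech induction or a ``tree of coherent towers'' of IET matrices. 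The actual sharpening comes from exploiting the geometry of a \emph{divergent} ray: the short saddle connections appearing along a single divergent geodesic form a non-crossing, slowly growing family constrained by area and modulus estimates, so the arcs at successive scales are nested with bounded branching, which is what produces the effective exponent $1/2$. Your ``continued-fraction-like nesting'' heuristic is pointing in roughly the right direction, but asserting a linear count of ``coherent towers'' without deriving the nesting, the non-crossing property, or the geometric bound that forces it is precisely the content of Masur's proof and is not established in your sketch. (A minor anachronism: the $O(R^2)$ saddle-connection count you cite as Eskin--Masur is, for the purposes of Masur's 1992 proof, due to Masur himself circa 1990; Eskin--Masur is the later refinement giving exact quadratic asymptotics.)
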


\begin{proof}[Proof of Theorem \ref{thm:smaller}] By Masur's upper bound, $$\Hdim( \nue(\om)) \le 1/2,$$ and thus, by Proposition~\ref{upper}, $h \le 5/2$.
\end{proof}

\subsection{Corollaries}\label{sec:cor} Finally, we prove Theorem~\ref{thm:stratum} and Theorem~\ref{thm:main} using Theorem~\ref{thm:iet}. The key result is the fundamental property of Hausdorff dimension: $$\Hdim (X \times Y) \geq \Hdim (X) + \Hdim (Y).$$
\subsubsection{Proof of Theorem~\ref{thm:stratum}} $\hh(2)$ is a fiber bundle over $\Delta_4$ via the map $\T$, and the vertical foliation of $\omega$ is non-uniquely ergodic if and only if $\T(\omega)$ is a non-uniquely ergodic IET. The fibers of $\T$ are $4$-dimensional (and in fact $\T$) defines a local product structure on $\hh$. Thus we have the Hausdorff dimension of the set of $\omega$ with a non-uniquely ergodic vertical foliation is given by $$4+ \frac5 2 = 7 - \frac 1 2$$ as claimed.\qed

\subsubsection{Proof of Theorem~\ref{thm:main}} Combining Theorem~\ref{thm:mbound} for the upper bound and Corollary~\ref{cor:lbound} with Proposition~\ref{upper} for the lower bound, we have that the set of flat surfaces $\om$ satisfying
%By the ergodicity of the $SL(2, \R)$-action on $(\hh(2), \mu_{MV})$, the function $$f(\omega) = \Hdim\{ \theta: \mbox{ vertical foliation of } e^{i\theta}\omega \mbox{ is non-uniquely ergodic}\}$$ is constant $\mu_{MV}$-almost everywhere on $\hh(2)$. Thus, since the space $\hh(2)/SO(2)$ is $6$-dimensional, and $f$ descends to a function on $\hh(2)/SO(2)$, we must have, by Theorem~\ref{theorem:foliation}, $f(\omega) = 1/2$ for $\mu_{MV}$-almost every $\omega$.\qed
$$\Hdim( \nue(\om)) =\frac 1 2  $$
 has positive $\mu_{MV}$-measure. By $\mu_{MV}$-ergodicity of $SL_2(\mathbb{R})$, it must have full $\mu_{MV}$-measure. A similar argument using only the upper bound yields Theorem~\ref{ub}.

\end{document}